\theoremstyle{plain}
\newtheorem{theorem}[equation]{Theorem}
\newtheorem{lemma}[equation]{Lemma}
\newtheorem{corollary}[equation]{Corollary}
\newtheorem{proposition}[equation]{Proposition}
\theoremstyle{definition}
\newtheorem{definition}[equation]{Definition}
\theoremstyle{remark}
\newtheorem{remark}[equation]{Remark}
\numberwithin{equation}{section}
\renewcommand{\Re}{{\rm Re}\,}
\newcommand{\BMO}{\mathcal{BMO}}
\def\R{{\mathbb R}}
\def\({\left(}
\def\){\right)}
\def\[{\left[}
\def\]{\right]}
\def\<{\left<}
\def\>{\right>}
\def\R{\mathbb R}
\def\B{\mathcal B}
\def\essup{\text{ess\,sup}}
\DeclareSymbolFont{bbold}{U}{bbold}{m}{n}
\DeclareSymbolFontAlphabet{\mathbbn}{bbold}
\newcommand{\bw}{\textbf{w}}
\newcommand{\bp}{\textbf{P}}
\begin{document}
\allowdisplaybreaks

\title[Commutators with BMO functions]
{Boundedness results for commutators with BMO functions via weighted estimates: A comprehensive approach}


\author[\'A. B\'enyi]{\'Arp\'ad B\'enyi}

\address{\'Arp\'ad B\'enyi
\\
Department of Mathematics
\\
516 High St
\\
Western Washington University
\\
\break Bellingham, WA 98225, USA} \email{arpad.benyi@wwu.edu}

\author[J.M. Martell]{Jos\'e Mar{\'\i}a Martell}

\address{Jos\'e Mar{\'\i}a Martell
\\
Instituto de Ciencias Matem\'aticas CSIC-UAM-UC3M-UCM
\\
Consejo Superior de Investigaciones Cient{\'\i}ficas
\\
C/ Nicol\'as Cabrera, 13-15
\\
E-28049 Madrid, Spain} \email{chema.martell@icmat.es}

\author[K. Moen]{Kabe Moen}

\address{Kabe Moen
\\
Department of Mathematics
\\
University of Alabama
\\
P.O. Box 870350
\\
Tuscaloosa, AL 35487, USA} \email{kabe.moen@ua.edu}

\author[E. Stachura]{Eric Stachura}

\address{Eric Stachura
\\
Department of Mathematics and Statistics
\\
Haverford College
\\
370 Lancaster Ave
\\
Haverford, PA 19041, USA} \email{estachura@haverford.edu}

\author[R. Torres]{Rodolfo H. Torres}
\address{Rodolfo H. Torres
\\
Department of Mathematics
\\
University of Kansas
\\
1460 Jayhawk Blvd
\\
Lawrence, Kansas 66045-7523, USA} \email{torres@ku.edu}

\thanks{B\'enyi and Moen were partially supported by grants from the Simons Foundation No. 246024 and  No. 160427, respectively. Martell acknowledges financial support from the Spanish Ministry of Economy and Competitiveness, through the ``Severo Ochoa" Programme for Centres of Excellence in R\&D (SEV-2015- 0554). He also acknowledges that the research leading to these results has received funding from the European Research Council under the European Union's Seventh Framework Programme (FP7/2007-2013)/ ERC agreement no. 615112 HAPDEGMT. Torres was supported in part by NSF grant DMS 1069015.}

\subjclass[2010]{Primary: 42B20, 47B07; Secondary: 42B25, 47G99}

\keywords{Commutators, spaces of bounded mean oscillations, Muckenhoupt weights, product weights, linearizable operators, linear and multilinear Calder\'on-Zygmund operators, linear and multilinear fractional integrals, bilinear Hilbert transform, multiparameter Calder\'on-Zygmund operators, Kato operator}

\date{\today}

\begin{abstract}
We present a unified method to obtain unweighted and weighted estimates of linear and multilinear commutators with $BMO$ functions, that is amenable to a plethora of operators and functional settings. Our approach elaborates on a commonly used Cauchy integral trick, recovering many known results but yielding also  numerous new ones. In particular, we solve a problem about the boundedness of the commutators of the bilinear Hilbert transform with functions in $BMO$.
\end{abstract}

\maketitle

\tableofcontents

\section{Introduction}
The purpose of this work is to present a panoramic point of view on the topic of commutators of linear, multilinear, and linearizable operators with pointwise multiplication by functions which belong to the John-Nirenberg space of Bounded Mean Oscillation ($BMO$). We will work in the weighted $L^p$ setting, where the weights will satisfy appropriate Muckenhoupt conditions. We will give a unified method for the weighted estimates of commutators with $BMO$ functions, first in the linear setting, and then in the multilinear setting.

Our approach is based on an idea going back to the pioneer work of Coifman, Rochberg and Weiss \cite{CRW1976}, who first studied the commutators of Calder\'on-Zygmund operators with $BMO$ functions. They used what is now sometimes referred to as the {\it Cauchy integral trick} to obtain boundedness properties of commutators from weighted estimates for the operator itself.  The idea is to represent the commutator as the derivative at the origin of an analytic family of operators obtained by conjugating the given operator with complex exponentials. Using the Cauchy integral to represent such derivative, estimates for the commutator follow by Minkowski's inequality and the fact that exponentials of $BMO$ functions are essentially Muckenhoupt weights.

This remarkably simple but elegant approach has been already generalized to other situations. For example, it was substantially extended in the work of Alvarez, Bagby, Kurtz, and P\'erez  \cite{ABKP}, where particular classes of pairs of weights were used, and iterated commutators and some vector valued ones were studied as well. Their work provides a lot of background and motivation for ours. Other applications have been used in the context of spaces of homogeneous type, for example in the work of Hofmann, Mitrea, and Taylor \cite{HMT2009}.  It is quite remarkable that such a general and {\it soft} approach to commutator estimates produces, in some cases, sharp weighted estimates in terms of the character of the weights. This was more recently shown by Chung, Pereyra, and P\'erez \cite{CPP2012}. We will further extend the Cauchy integral trick so it can be applied to many new situations not covered by previous extensions of the method. In particular, we will be able to treat operators which map a Lebesgue space into another one with a different exponent, multilinear operators, and operators which satisfies weighted estimates but only for limited classes of weights.  We will also be able to apply our approach in very general measure theoretic and geometric contexts and multiparameter settings.  We will recover many known results and establish several new ones as well.

The main new idea is to replace $BMO$ by its ${\rm exp}\,L$ Orlicz representation denoted by $\BMO$,  which allows for a better quantification of some estimates and more flexibility in applications. Of course, in the classical Euclidean setting $BMO=\BMO$, but this may not be the case in more general contexts.

Nowadays, there is a myriad of papers on the topic of weighted norm inequalities for  commutators, both linear and multilinear. Although most of the works are based on the same ideas, typically, the arguments have to be reworked every time since the operators or the settings are different. Independently, for these settings and operators it is common to find a well established weighted norm theory.  Our results say that, with the exception of some end-point results, the estimates for the commutators follow, almost automatically, from the weighted theory. Thus, in most cases, the ad-hoc arguments for the commutators can be skipped, provided the weighted norm inequalities have been proved. Our main result for linear operators is as follows (we will provide detailed definitions in the next section).

\begin{theorem}\label{thm:main}
Let $T$ be a linear operator. Fix $1< p, q < \infty$, $\theta > 0$, and $1 < s< \infty$. Suppose that  there exists  an increasing function $\phi: [1, \infty) \longrightarrow  [0, \infty)$  such that for all weights $w$ so that $w^\theta \in A_s$, and all $f \in L^p (w^p)$,
\begin{equation}\label{eqn:main:hypintro}
\|Tf\|_{L^q (w^q)}
\leq
\phi \left( \big[w^\theta\big]_{A_s} \right) \|f\|_{L^p (w^p)}.
\end{equation}
Then, for each $b\in \BMO$ and for each weight $w$ and $1<\eta<\infty$ such that $w^{\theta\,\eta}\in A_s$,
\begin{equation}\label{eqn:main:k=1intro}
\big\|[T, b]f \big\|_{L^q (w^q)}
\leq  C(\phi,w,\theta,\eta,s)\,
\|b\|_{\BMO}\, \|f\|_{L^p (w^p)};
\end{equation}
and, for every $k\ge 2$,
\begin{equation}\label{eqn:main:kintro}
\big\|[T, b]_k f \big\|_{L^q (w^q)}
\leq  C(\phi,w,\theta,\eta,s,k)\,
\|b\|_{\BMO}^k\, \|f\|_{L^p (w^p)}.
\end{equation}
\end{theorem}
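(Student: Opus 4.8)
\emph{Proof idea.} The plan is to run the Cauchy integral trick of Coifman--Rochberg--Weiss while keeping quantitative control of the $A_s$ constants of every weight that appears. We may assume $b$ is real-valued (split into real and imaginary parts otherwise). For $z\in\C$ set
\[
T_zf:=e^{zb}\,T\!\big(e^{-zb}f\big),
\]
and expand in powers of $z$: the coefficient of $z^k$ in the power series of $T_zf$ is $\tfrac{(-1)^k}{k!}[T,b]_kf$, so that $[T,b]_kf=(-1)^k\tfrac{d^k}{dz^k}T_zf\big|_{z=0}$. The Cauchy integral formula then gives, for every $\varepsilon>0$,
\[
[T,b]_kf=\frac{(-1)^k\,k!}{2\pi i}\int_{|z|=\varepsilon}\frac{T_zf}{z^{k+1}}\,dz ,
\]
and Minkowski's integral inequality reduces matters to a uniform bound for $\varepsilon^{-k}\sup_{|z|=\varepsilon}\|T_zf\|_{L^q(w^q)}$. (The $L^q(w^q)$-valued analyticity of $z\mapsto T_zf$ and the validity of this representation are first checked on a convenient dense subclass of $f$'s and then propagated by density using the uniform bound obtained below; I suppress this routine point.)

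Next I would bound $\|T_zf\|_{L^q(w^q)}$ for $|z|=\varepsilon$. Writing $x=\Re z$, so that $|e^{\pm zb}|=e^{\pm xb}$, and putting $v:=w\,e^{xb}$, one has
\[
\|T_zf\|_{L^q(w^q)}=\|T(e^{-zb}f)\|_{L^q(v^q)},\qquad \|e^{-zb}f\|_{L^p(v^p)}=\|f\|_{L^p(w^p)},
\]
the second identity because $|e^{-zb}|\,v=e^{-xb}\,w\,e^{xb}=w$. Hence, provided $v^\theta\in A_s$, hypothesis \eqref{eqn:main:hypintro} applied with the weight $v$ yields
\[
\|T_zf\|_{L^q(w^q)}\le \phi\!\big(\big[v^\theta\big]_{A_s}\big)\,\|f\|_{L^p(w^p)} .
\]

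It remains to control $\big[v^\theta\big]_{A_s}=\big[w^\theta e^{\theta x b}\big]_{A_s}$ in terms of $\big[w^{\theta\eta}\big]_{A_s}$, uniformly over small $|z|$. With $t:=1-\tfrac1\eta\in(0,1)$ and $\lambda:=\tfrac{\theta x\eta}{\eta-1}$ one has the factorization $w^\theta e^{\theta x b}=\big(w^{\theta\eta}\big)^{1-t}\big(e^{\lambda b}\big)^{t}$. Combining the elementary submultiplicativity $\big[u_0^{1-t}u_1^{t}\big]_{A_s}\le [u_0]_{A_s}^{1-t}[u_1]_{A_s}^{t}$ (Hölder's inequality applied in each factor of the $A_s$ quotient) with the exponential self-improvement of $\BMO$ --- namely that $e^{\lambda b}\in A_s$ with $[e^{\lambda b}]_{A_s}\le C_0$ whenever $|\lambda|\,\|b\|_{\BMO}\le c_0$, where $C_0,c_0$ depend only on $s$ and the ambient structure, and this is \emph{precisely} where the $\mathrm{exp}\,L$ description $\BMO$ enters --- one obtains $v^\theta\in A_s$ with $\big[v^\theta\big]_{A_s}\le C_0^{\,t}\big[w^{\theta\eta}\big]_{A_s}^{1/\eta}=:\Lambda$ for every $z$ with $|z|\le \varepsilon_0:=\tfrac{c_0(\eta-1)}{\theta\eta\|b\|_{\BMO}}$. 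Choosing $\varepsilon=\varepsilon_0$ and combining the last three displays gives
\[
\big\|[T,b]_kf\big\|_{L^q(w^q)}\le k!\,\Big(\tfrac{\theta\eta}{c_0(\eta-1)}\Big)^{k}\phi(\Lambda)\,\|b\|_{\BMO}^{k}\,\|f\|_{L^p(w^p)},
\]
which is \eqref{eqn:main:k=1intro} when $k=1$ and \eqref{eqn:main:kintro} when $k\ge2$, with the asserted dependence of the constant on $\phi$, $w$, $\theta$, $\eta$, $s$ (and $k$).

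I expect the only genuinely delicate points to be: (i) the rigorous justification of the vector-valued analyticity and of the Cauchy representation, which is handled by the density reduction above and is thus more a matter of care than a real obstacle; and (ii) having a \emph{quantitative} form of ``$e^{\lambda b}\in A_s$ for small $\lambda$'' with $A_s$ constant independent of $b$ --- this is the one place where the scheme truly relies on the John--Nirenberg/$\mathrm{exp}\,L$ theory, and it is what makes the method portable to the general, homogeneous-type and multiparameter settings treated later. Everything else is bookkeeping of constants.
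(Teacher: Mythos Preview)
Your proposal is correct and follows essentially the same route as the paper. The paper defines $\Psi(z)f=e^{-zb}T(e^{zb}f)$ (your $T_z$ with $z\mapsto -z$), applies the Cauchy integral formula on a circle of radius $\delta=\min\{1,s-1\}/(\theta\eta'\|b\|_{\BMO})$, and controls $[v^\theta]_{A_s}$ via the same H\"older splitting you call ``submultiplicativity'' (their Lemma~\ref{lemma:BMO->Ap:prod}), combined with the quantitative bound $[e^{\lambda b}]_{A_s}\le 4^{|\lambda|\|b\|_{\BMO}}$ for $|\lambda|\le\min\{1,s-1\}/\|b\|_{\BMO}$ (their Lemma~\ref{lemma:BMO->Ap}), which makes your $c_0=\min\{1,s-1\}$ and $C_0\le 4$ explicit.
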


The apparently cumbersome form of the statement of the above theorem allows us to apply it to several families of operators. For example, for  classical Calder\'on-Zygmund operators one simply takes  $p=q=\theta=s=2$. However, other situations require one to work with a power of a weight in a given class. Consider, for example, the operator $L=-{\rm div }(A\nabla )$, where the matrix $A$ is uniformly elliptic, bounded and complex. One can study  the Riesz transform, functional calculus, and square function associated with this operator. All these satisfy weighted norm inequalities in $L^p(w^p)$ (or $L^p(u)$ after writing $u=w^p$) for some range of $p$ and for some class of weights which can be written as $w^\theta\in A_s$ with $\theta$, $s$ depending on the index $p$ and the range of the un-weighted estimates. Another familiar situation is the one given by the classical fractional integrals or Riesz potentials. There are natural weighted norm inequalities for some $p<q$ and for weights $w\in A_{p,q}$ or, equivalently, $w^q\in A_{1+q/p'}$ (that is, $w^\theta\in A_s$ for some $\theta>1$ and $s>1$). The same kind of behavior occurs for fractional powers of the operators $L$ above.  A look at the existing literature (we will give precise reference in later sections) reveals that the commutators of all the operators in the previous families with point-wise multiplication by $ BMO$ functions satisfy weighted norm inequalities, but the arguments need to be adapted to each operator in question. Our result says that once the weighted norm theory has been developed, the estimates for commutators follow at once.

Another important feature that we would like to emphasize is that the way the hypotheses are written  using  the $\BMO$ space is amenable to several immediate generalizations, see Theorem \ref{thm:main:B}. For instance, we could replace the basis of cubes in $\BMO$ and the $A_p$ conditions by other families of sets such as dyadic cubes, rectangles, and so on, as well as replace the underlying space $\mathbb R^n$ by a space of homogeneous type and more. In general situations, the John-Nirenberg inequality or the self-improvement of the $A_p$ classes -- that is, the fact that $w\in A_p$ implies  $w^{1+\epsilon}\in A_p$ -- may fail (see \cite[p. 29]{Chema2010}). However, as we will see in the proof of our results, our formulation does not require any of these deep properties. The $\BMO$ space we use here is defined using the exponential norm so that {\it it carries the John-Nirenberg inequality on its DNA}. Hence, $\BMO$ is in general smaller than the usual $BMO$ space defined using the $L^1$-averages. Nonetheless, in addition to the classical Euclidean setting, $\BMO$ does coincide with $BMO$ also in some multiparameter setting; see Theorem \ref{thm:bmo-BMO} below. Note that the conclusion in Theorem~\ref{thm:main}, is written for weights $w$ such that $w^{\theta\,\eta}\in A_s$ for some $\eta$, which, in the Euclidean setting with the Lebesgue measure and with the Muckenhoupt conditions written in terms of cubes or balls, happens to be true for every $w^\theta\in A_s$ by the fact that $A_p$-weights satisfy reverse H\"older inequalities. This, however, may not be true for more general measures or geometries. When the reverse H\"older inequality does hold, we  can give very precise estimates on the constants appearing in Theorem \ref{thm:main}; see Theorem \ref{thm:main:II}.

We obtain in Theorem \ref{thm:Comm-vs-Weights} a partial converse to Theorem \ref{thm:main}, and multilinear versions in Theorems \ref{thm:multilinear:main:I} and \ref{thm:multilinear:main:II}. As a corollary we prove the first quantitative results for commutators of multilinear Calder\'on-Zygmund operators, obtaining estimates in terms of the character of the weights; see Theorem \ref{thm:multilinear:CZ}. A second important corollary is the proof of the boundedness of the commutators of the bilinear Hilbert transform with $BMO$ functions. Although the first results about boundedness of the bilinear Hilbert transform by Lacey and Thiele \cite{LT}, \cite{LT2}  have been know for some 20 years now, the boundedness of its commutators was a problem that eluded the efforts of several researchers. Our methods provide the result including also some weighted estimates; see Theorems~\ref{thm:BHT:commutator}  and Corollary~\ref{corolo:BHT-new}.   Moreover, combining this result with recent extrapolation ones by Cruz-Uribe and Martell \cite{CM-extrapol}, we prove in Corollary~\ref{corol:BHT:commutator} the boundedness of the commutators in the same range of Lebesgue exponents known for the boundedness of the bilinear Hilbert transform itself.

We collect most of the basic definitions and properties of weights needed in the next section; the expert reader can easily skip this part. The rest of the article is organized as follows: the main results for linear or linearizable operators are presented in Section \ref{linear}, the general theorems in the multi-linear case are given in Section \ref{multilinear}, and a multitude of applications are outlined in Section \ref{applications}. Finally, Appendix \ref{appendix} contains some further results for multilinear operators.  
\bigskip

{\bf Acknowledgement}.
Part of this work was carried out while the second and two last named authors were together at the Instituto de Ciencias Matem\'aticas (ICMAT), Madrid, Spain, as participants of the Research Term on Real Harmonic Analysis and Applications to PDE in 2013. Since then the authors have presented this work at many venues and added several new applications of the methods initially developed. Most recently the last author presented this work  at the Mathematical Sciences Research Institute (MSRI), Berkeley, California in February 2017, during the Harmonic Analysis program.  The authors would like
to thank the ICMAT, the MSRI,  and the organizers of the programs mentioned for
providing such a great atmosphere to carry out mathematical research.

\section{Preliminaries}
Let $|E|$ denote the Lebesgue measure of a measurable set $E\subseteq \mathbb{R}^n$. Throughout this paper, we use the notation
$$
f_E=\fint_E f \,dx = \frac{1}{|E|}\int_E f(x) \,dx
$$
with the understanding that the term is zero if the set $E$ in question has zero or infinite Lebesgue measure. We also use $Q$ to denote cubes with sides parallel to the coordinate axes.

\subsection{Muckenhoupt weights}\label{subsection:Muckenhoupt}
By a weight, we mean a non-negative, locally integrable function. We now recall the definition and several properties of Muckenhoupt's $A_p$ classes of weights. For $1 < p < \infty$,  a weight $w$ is  said to belong to $A_p$  provided
$$
[w]_{A_p}:=\sup_{Q\subset \mathbb{R}^n}\left( \fint_Q w \,dx \right) \left( \fint_Q w^{1-p'} \,dx \right)^{p-1}<\infty;
$$
as usual, $p'$ denotes the conjugate exponent of $p$. For $p=1$, $w$ is in $ A_1$ if
$$
[w]_{A_1}:=\sup_{Q\subset \R^n}\left(\fint_Q w \,dx\right)\,\|w^{-1}\|_{L^\infty(Q)}<\infty,
$$
or, equivalently, $
[w]_{A_1}
=
\|Mw/w\|_{L^\infty(\mathbb{R}^n)}.
$
Here and elsewhere $M$ denotes the Hardy Littlewood maximal function,
$$
M f(x) = \sup_{Q\ni x} \fint_Q |f(y)|\, dy.
$$
We set
$$
A_\infty = \bigcup_{p \geq 1} A_p.
$$
We say that a weight $w$ belongs to the reverse H\"older class $RH_q$, $1 < q < \infty$, if
$$
[w]_{RH_q}:=\sup_{Q\subset\mathbb{R}^n} \left( \fint_Q w^q \,dx \right)^{1/q} \Big(\fint_Q w \,dx\Big)^{-1}<\infty.
$$
When $q = \infty$, we say that $w \in RH_\infty$ if
$$
[w]_{RH_\infty}:=\sup_{Q\subset\mathbb{R}^n} \|w^{-1}\|_{L^\infty(Q)}\Big(\fint_Q w \,dx\Big)^{-1}<\infty.
$$
In the case that $q =1$ we define $RH_1$ to be all $A_\infty$ weights.

We list a few properties that will be important for us later on. The proofs are standard, and can be found for instance in \cite{RDF1985}. For $(g)$, see \cite{JN1991}.
\begin{proposition}\label{prop:weights}
\
\begin{list}{$(\theenumi)$}{\usecounter{enumi}\leftmargin=1cm \labelwidth=1cm \itemsep=0.2cm \topsep=0cm \renewcommand{\theenumi}{\alph{enumi}}}

\item $A_1\subset A_p\subset A_q$ for $1\le p\le q<\infty$ with $[w]_{A_1}\ge [w]_{A_p}\ge [w]_{A_q}$.

\item $RH_{\infty}\subset RH_q\subset RH_p$ for $1<p\le q\le \infty$ with $[w]_{RH_\infty}\ge [w]_{RH_q}\ge [w]_{RH_p}$.

\item If $w\in A_p$, $1<p<\infty$, then there exists $1<q<p$ such
that $w\in A_q$.

\item If $w\in RH_q$, $1<q<\infty$, then there exists $q<p<\infty$ such
that $w\in RH_p$.

\item $\displaystyle A_\infty=\bigcup_{1\le p<\infty} A_p=\bigcup_{1<q\le
\infty} RH_q $

\item If $1<p<\infty$, $w\in A_p$ if and only if $w^{1-p'}\in
A_{p'}$.

\item If $1\le q\le \infty$ and $1\le s<\infty$, then $\displaystyle
w\in A_q \cap RH_s$ if and only if $ w^{s}\in A_{s\,(q-1)+1}$.
\end{list}
\end{proposition}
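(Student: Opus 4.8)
The plan is to isolate the single analytic ingredient --- the reverse H\"older inequality for $A_p$ weights --- and derive everything else from it by H\"older's and Jensen's inequalities together with bookkeeping of conjugate exponents. So I would first dispatch the purely ``combinatorial'' parts $(a)$, $(b)$, $(f)$, $(g)$.

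For $(a)$, when $1<p\le q$ the power-mean inequality applied to $g=w^{-1}$ with exponents $\tfrac1{q-1}\le\tfrac1{p-1}$ gives $\big(\fint_Q w^{1-q'}\big)^{q-1}\le\big(\fint_Q w^{1-p'}\big)^{p-1}$, hence $[w]_{A_q}\le[w]_{A_p}$ after multiplying by $\fint_Q w$ and taking a supremum; the bound $\big(\fint_Q w^{1-p'}\big)^{p-1}\le\|w^{-1}\|_{L^\infty(Q)}$ gives $[w]_{A_p}\le[w]_{A_1}$. Part $(b)$ is the power-mean inequality $\big(\fint_Q w^{p}\big)^{1/p}\le\big(\fint_Q w^{q}\big)^{1/q}$ for $p\le q$, together with the corresponding essential-supremum comparison at $q=\infty$. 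For $(f)$ I would check the algebraic identities $(p-1)(p'-1)=1$ and $(1-p')(1-(p')')=1$; the latter says $\big(w^{1-p'}\big)^{1-(p')'}=w$, and a direct rearrangement of the defining averages then yields $\big[w^{1-p'}\big]_{A_{p'}}=[w]_{A_p}^{\,p'-1}$, which gives both implications of $(f)$ at once. For $(g)$, setting $P:=s(q-1)+1$, the one thing to verify is the identity $s(1-P')=1-q'$; with it in hand, if $w\in A_q\cap RH_s$ then for every cube
\[
\Big(\fint_Q w^s\Big)\Big(\fint_Q (w^s)^{1-P'}\Big)^{P-1}
=\Big(\fint_Q w^s\Big)\Big(\fint_Q w^{1-q'}\Big)^{s(q-1)}
\le\big([w]_{RH_s}\,[w]_{A_q}\big)^{s},
\]
so $w^s\in A_P$; conversely $\fint_Q w\le\big(\fint_Q w^s\big)^{1/s}$ and $\big(\fint_Q w^{1-q'}\big)^{q-1}\ge\big(\fint_Q w\big)^{-1}$ (Jensen, since $t\mapsto t^{1-q'}$ is convex) peel the $A_q$ and $RH_s$ bounds back off $[w^s]_{A_P}$. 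The degenerate cases $q=1$ and $s=1$ are handled with the $\|w^{-1}\|_{L^\infty}$-form and the convention $RH_1=A_\infty$.

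The single genuinely analytic ingredient, and the step I expect to be the main obstacle, is the \emph{reverse H\"older inequality}: if $w\in A_p$ with $1\le p<\infty$, then there exist $\epsilon=\epsilon(n,p,[w]_{A_p})>0$ and $C=C(n,p,[w]_{A_p})$ with $\big(\fint_Q w^{1+\epsilon}\big)^{1/(1+\epsilon)}\le C\fint_Q w$ for every cube $Q$. I would prove it in two moves. First, from the $A_p$ condition, applying H\"older's inequality to the splitting $\mathbf 1_{Q\setminus E}=\mathbf 1_{Q\setminus E}\,w^{1/p}\,w^{-1/p}$ yields $\big(|Q\setminus E|/|Q|\big)^{p}\le[w]_{A_p}\,w(Q\setminus E)/w(Q)$ for every measurable $E\subseteq Q$; taking $|E|$ a small proportion of $|Q|$ then produces $\alpha,\beta\in(0,1)$, depending only on $n,p,[w]_{A_p}$, such that $|E|\le\alpha|Q|\Rightarrow w(E)\le\beta\,w(Q)$ --- the quantitative $A_\infty$ condition. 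Second, I would fix a cube $Q_0$, run a Calder\'on--Zygmund stopping-time decomposition of $w$ on $Q_0$ at the geometric heights $\lambda_k=\gamma^k\fint_{Q_0}w$ for a large fixed $\gamma$, use the comparison above on the stopping cubes to obtain geometric decay $|\Omega_{k+1}|\le\theta|\Omega_k|$ with $\theta<1$ of the superlevel sets $\Omega_k$ of the maximal function $M_{Q_0}w$ computed with subcubes of $Q_0$, and then bound $\int_{Q_0}w^{1+\epsilon}$ by summing over the layers $\Omega_k\setminus\Omega_{k+1}$; the resulting geometric series converges exactly when $\epsilon$ is small relative to $-\log\theta/\log\gamma$, and gives the stated inequality. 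This is the classical argument and is where all the work sits; note it uses only the doubling of Lebesgue measure on cubes.

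With the reverse H\"older inequality available the self-improvement statements are two-line corollaries. For $(c)$: given $w\in A_p$, $1<p<\infty$, part $(f)$ puts $\sigma:=w^{1-p'}$ in $A_{p'}$, so the reverse H\"older inequality gives $\delta>0$ with $\big(\fint_Q\sigma^{1+\delta}\big)^{1/(1+\delta)}\le C\fint_Q\sigma$; choosing $q$ by $q-1=(p-1)/(1+\delta)$ (so $1<q<p$) makes $\sigma^{1+\delta}=w^{1-q'}$, and raising the inequality to the power $q-1$ and multiplying by $\fint_Q w$ gives $[w]_{A_q}\le C^{\,p-1}[w]_{A_p}<\infty$. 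For $(d)$: given $w\in RH_q$, $1<q<\infty$, the H\"older estimate $w(E)\le[w]_{RH_q}\big(|E|/|Q|\big)^{1/q'}w(Q)$ for $E\subseteq Q$ gives the quantitative $A_\infty$ condition, hence (by the standard equivalences for $A_\infty$, which themselves follow from the reverse H\"older machinery) $w\in A_{p_0}$ for some $p_0$; then $w\in A_{p_0}\cap RH_q$, so by $(g)$ one has $w^q\in A_{q(p_0-1)+1}$, to which the reverse H\"older inequality applies, producing $\epsilon>0$ with $w^q\in RH_{1+\epsilon}$, i.e.\ $w\in RH_{q(1+\epsilon)}$ and $q(1+\epsilon)>q$ (alternatively, $(d)$ is Gehring's lemma, proved by the same stopping-time scheme applied to the reverse H\"older condition itself). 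For $(e)$: by definition $A_\infty=\bigcup_{p\ge1}A_p$, the inclusion $\bigcup_{1<q\le\infty}RH_q\subseteq A_\infty$ comes from the H\"older estimate just used (with $RH_\infty\subseteq RH_q$ from $(b)$ and $RH_1:=A_\infty$), and the reverse inclusion is immediate since any $w\in A_\infty$ lies in some $A_p$ and hence in $RH_{1+\epsilon}$. In short: $(a),(b),(f),(g)$ are H\"older/Jensen bookkeeping, the reverse H\"older (Gehring-type) inequality is the one nontrivial lemma and the expected obstacle, and $(c),(d),(e)$ follow from it together with $(f)$ and the conjugate-exponent identities above.
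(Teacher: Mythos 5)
Your outline is correct, and it is precisely the standard argument: the paper does not prove this proposition at all but simply cites Garc\'ia-Cuerva--Rubio de Francia \cite{RDF1985} for $(a)$--$(f)$ and Johnson--Neugebauer \cite{JN1991} for $(g)$, and those references carry out exactly the scheme you describe (H\"older/Jensen bookkeeping for $(a)$, $(b)$, $(f)$, $(g)$; the Calder\'on--Zygmund/Gehring reverse H\"older inequality as the one analytic lemma; $(c)$, $(d)$, $(e)$ as its corollaries via the duality $(f)$ and the exponent identity in $(g)$). The only caveat worth flagging is that your part $(b)$ implicitly uses the usual definition $[w]_{RH_\infty}=\sup_Q\|w\|_{L^\infty(Q)}(\fint_Q w\,dx)^{-1}$, whereas the paper's displayed definition of $RH_\infty$ (with $\|w^{-1}\|_{L^\infty(Q)}$) appears to contain a typo.
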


As we will work in the weighted setting, we need the notion of weighted $L^p$ spaces: $L^p (w)=L^p(\mathbb{R}^n,wdx)$ denotes the collection of measurable functions $f$ on $\mathbb{R}^n$ such that
$$
\|f\|_{L^p (w)}:= \left( \int_{\mathbb{R}^n} |f(x)|^p w(x) \,dx \right)^{1/p} <  \infty.
$$

\subsection{The space of functions of bounded mean oscillation}
We recall here the definition of the John-Nirenberg space of functions of bounded mean oscillation, $BMO(\mathbb R^n)$, and indicate how it identifies with an appropriate Orlicz space. We say that a locally integrable function $f\in BMO(\mathbb R^n)$ if
$$
\|f\|_{BMO} := \sup_Q \fint_Q |f - f_Q| \,dx < + \infty;
$$
the supremum is taken over the collection of all cubes $Q\subset\mathbb{R}^n$. We will simply write $BMO$ instead of $BMO(\mathbb R^n)$ when it is understood that the underlying space on which it is defined is the Euclidean space.

Now, given a cube $Q\subset\mathbb{R}^n$, the localized and normalized $\exp L$-norm is defined as
$$
\|f\|_{\exp L,Q}
:=
\inf \left\{ \lambda > 0 : \fint_Q \bigg(e^{\frac{|f(x)|}{\lambda}}-1\bigg) \,dx \leq 1 \right\}
$$$$=
\inf \left\{ \lambda > 0 : \fint_Q e^{\frac{|f(x)|}{\lambda}} \,dx \leq 2 \right\}.
$$
Note that,
$$
 \fint_Q \frac{|f-f_Q|}{ \|f-f_Q\|_{{\rm exp}\,L,Q}} \, dx
\leq
\fint_Q \left(e^{\frac{|f-f_Q| }  { \|f-f_Q\|_{{\rm exp}\,L,Q} }} -1\right)\, dx \leq 1
$$
or
$$
 \fint_Q |f-f_Q| \, dx \leq  \|f-f_Q\|_{{\rm exp}\,L,Q},
$$
and hence
\begin{equation} \label{easy}
\|f\|_{BMO}=\sup_{Q} \fint_Q |f-f_Q| \, dx \leq \sup_{Q} \|f-f_Q\|_{{\rm exp}\,L,Q}.
\end{equation}

On the other hand, the John-Nirenberg inequality says that there exists a dimensional constant $C_n$ such that for every $f\in BMO$ we have
\begin{equation}\label{hard}
\|f\|_{BMO}
\le
\sup_Q \|f-f_Q\|_{{\rm exp}\,L,Q}
\le
C_n
\|f\|_{ BMO}.
\end{equation}

The estimate \eqref{hard} captures the exponential integrability of a $BMO$ function, a fact that is going to be crucial for our results about commutators and weights. 
Therefore, it is convenient to re-normalize $BMO$ and use the exponential norms as this will make the computations and the constants in our arguments cleaner.  Thus, we can define an a priori new Orlicz-type space $\BMO(\mathbb R^n)$ (or simply $\BMO$) via the norm
$$
\|b\|_{ \BMO} := \sup_Q \|b - b_Q\|_{\exp L, Q}.
$$
Clearly, \eqref{hard} implies that in the classical Euclidean setting $(BMO,\|\cdot\|_{ BMO})$ and $(\BMO,\|\cdot\|_{ \BMO})$ are equivalent quasi-normed spaces; nevertheless, the appearance of $\BMO$ in the statements of our results is to emphasize that $\|\cdot\|_{ \BMO}$ is used.

\subsection{Commutators}
We recall the notion of commutators with linear operators. Suppose $T$ is a linear operator. Define, whenever it makes sense, the (first order) commutator of $T$ with a measurable function $b$ by
\begin{align} \label{commutator}
T_b^1 f(x)
=
[T, b] f(x)
=
T(bf) (x) - b(x) Tf(x)
=
T\big(\big(b(\cdot)-b(x)\big)f(\cdot)\big)(x).
\end{align}
One can also define the higher order commutators of $T$ with a measurable function $b$ by the recursive formula $T_b^k =[T_b^{k-1},b]$ for every $k\ge 2$. Instead of $T_b^k$, the notation $[T, b]_k$ is sometimes used in the literature. It is immediate to see that in such a case,
$$
T_b^k f(x)
=
T\Big(\big(b(\cdot)-b(x)\big)^k f(\cdot)\Big)(x),
\qquad
k\ge 0,
$$
where it is understood that $T_b^0=T$.

The previous definition can be extended to linearizable  operators. We say that a sublinear operator $T$ is linearizable if there exists a Banach space $\mathbb{B}$ and a $\mathbb{B}$-valued linear operator $\mathcal{T}$ such that $T f(x)=\|\mathcal{T}f(x)\|_{\mathbb{B}}$. In this way we set, for all $k\ge 0$,
$$
T_b^k f(x)
:=
\|\mathcal{T}_b^k f(x)\|_{\mathbb{B}}
=
\Big\|
\mathcal{T}\Big(\big(b(\cdot)-b(x)\big)^k f(\cdot)\Big)(x),
\Big\|_{\mathbb{B}}
=
T\Big(\big(b(\cdot)-b(x)\big)^k f(\cdot)\Big)(x).
$$

\section{The general theorem}\label{linear}

\subsection{Commutators in the classical setting I}\label{section:Comm-class-I}
The following result is a generalized version of the classical theorem of Coifman-Rochberg-Weiss \cite{CRW1976} which will cover many of the commutators results in the literature. It is a more precise formulation of Theorem \ref{thm:main} stated in the introduction. Moreover, the formulation can be further sharpened using stronger properties of the Muckenhoupt weights, but  we will do this in Section \ref{section:Comm-class-II}. For now, we prefer to state our result in a way that can be generalized to some other situations where the associated Muckenhoupt weights may fail to verify such properties. Recall that we have the identification of spaces $BMO=\BMO$. The operators we consider here act on functions defined on $\mathbb R^n$.

\begin{theorem} \label{thm:main:I}
Let $T$ be either a linear or a linearizable operator. Fix $1 \leq p, q < \infty$, $\theta > 0$, and $1 < s< \infty$. Suppose that  there exists  an increasing function $\phi: [1, \infty) \longrightarrow  [0, \infty)$  such that for each $f \in L^p (w^p)$ we have
\begin{equation}\label{eqn:main:hyp}
\|Tf\|_{L^q (w^q)}
\leq
\phi \left( \big[w^\theta\big]_{A_s} \right) \|f\|_{L^p (w^p)},
\qquad
\forall\, w^\theta \in A_s.
\end{equation}
Then, for each $b \in \BMO$, for each weight $w$ and $1<\eta<\infty$ such that $w^{\theta\,\eta}\in A_s$ we have
\begin{equation}\label{eqn:main:k=1}
\big\|[T,b]f \big\|_{L^q (w^q)}
\leq
\frac{\eta'\,\theta}{\min \left\{1, s-1 \right\}}\,
\phi
\left( 4^{ \frac{\min\{1,s-1\}}{\eta'}}\, \big[w^{\theta\,\eta}\big]_{A_s}^{1/\eta} \right)\,
\|b\|_{\BMO}\, \|f\|_{L^p (w^p)};
\end{equation}
and, for every $k\ge 2$,
\begin{equation}\label{eqn:main:k}
\big\|T_b^k f \big\|_{L^q (w^q)}
\leq
k!\left(\frac{\eta'\,\theta}{\min \left\{1, s-1 \right\}}\right)^k\,
\phi
\left( 4^{ \frac{\min\{1,s-1\}}{\eta'}}\, \big[w^{\theta\,\eta}\big]_{A_s}^{1/\eta} \right)\,
\|b\|_{\BMO}^k\, \|f\|_{L^p (w^p)}.
\end{equation}
\end{theorem}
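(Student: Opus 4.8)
The plan is to follow the Cauchy integral trick in the quantitative form suggested by the statement. For $z \in \mathbb{C}$ and $b \in \BMO$ define the conjugated operator $T_z f = e^{zb}\, T(e^{-zb} f)$. The key algebraic fact, obtained by differentiating in $z$ and evaluating at $z=0$, is that $[T,b]f = \frac{d}{dz} T_z f \big|_{z=0}$, and more generally, by Faà di Bruno / direct induction, that the $k$-th commutator satisfies $T_b^k f = \frac{d^k}{dz^k} T_z f \big|_{z=0}$. By Cauchy's integral formula, for any radius $r>0$,
\begin{equation*}
T_b^k f(x) = \frac{k!}{2\pi i} \int_{|z|=r} \frac{T_z f(x)}{z^{k+1}} \, dz,
\end{equation*}
so that by Minkowski's integral inequality,
\begin{equation*}
\big\| T_b^k f \big\|_{L^q(w^q)} \leq \frac{k!}{r^k} \sup_{|z|=r} \big\| T_z f \big\|_{L^q(w^q)}.
\end{equation*}
First I would set up these identities carefully, and in the linearizable case verify that the same differentiation identity holds pointwise inside $\|\cdot\|_{\mathbb B}$ and then pass to the norm, which is exactly the content of the last displayed formula in the Commutators subsection.

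The heart of the matter is to estimate $\|T_z f\|_{L^q(w^q)}$ using the hypothesis \eqref{eqn:main:hyp}. Writing $z = x+iy$ with $|z| = r$, we have $\|T_z f\|_{L^q(w^q)} = \| T(e^{-zb} f) \|_{L^q(v^q)}$ where $v = w\, e^{(\Re z)\, b} = w\, e^{x b}$, since $|e^{zb}| = e^{(\Re z) b}$. The hypothesis applies provided $v^\theta \in A_s$, and then gives the bound $\phi([v^\theta]_{A_s}) \| e^{-zb} f \|_{L^p(v^p)} = \phi([v^\theta]_{A_s}) \| f \|_{L^p(w^p)}$, the last equality because $|e^{-zb}| v = w$. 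So everything reduces to two weight computations: showing $v^\theta = w^\theta e^{\theta x b} \in A_s$ for $|x| \leq r$ with a suitable choice of $r$, and controlling $[v^\theta]_{A_s}$ in terms of $[w^{\theta \eta}]_{A_s}$. This is where the $\BMO$ (i.e. $\exp L$) normalization pays off: the John–Nirenberg inequality is built into the norm, so $\fint_Q e^{\mu |b - b_Q|}\, dx \leq 2$ whenever $\mu \leq 1/\|b\|_{\BMO}$, and one extracts from this a reverse-Hölder-type control on $e^{\theta x b}$. Concretely, I would prove the elementary lemma: if $b \in \BMO$ and $|\mu|\, \|b\|_{\BMO} \leq 1/\eta'$ (for $\eta' = \eta/(\eta-1)$), then $e^{\mu b} \in A_\eta$-type reverse Hölder with constant at most $4$ on each cube — more precisely $\big(\fint_Q e^{\mu\eta' b}\big)\big(\fint_Q e^{-\mu\eta' b}\big)^{?}$ is controlled — and then combine with Hölder's inequality in the product $w^\theta e^{\theta x b}$: writing the $A_s$ average of the product via the splitting $w^{\theta} = (w^{\theta\eta})^{1/\eta} \cdot (e^{\theta x b \eta'})^{1/\eta'}$ style, one gets $[v^\theta]_{A_s} \leq 4^{\min\{1,s-1\}/\eta'} [w^{\theta\eta}]_{A_s}^{1/\eta}$. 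This forces the choice $r$ so that $\theta |x| \leq \theta r$ matches the threshold $1/(\eta' \|b\|_{\BMO})$, i.e. $r = \frac{1}{\eta'\,\theta\, \|b\|_{\BMO}}$ up to the factor $\min\{1,s-1\}$ coming from the $A_s$ self-improvement direction ($s$ versus $s-1$). With this $r$, $\frac{k!}{r^k} = k! \big(\eta'\,\theta\,\|b\|_{\BMO}/\min\{1,s-1\}\big)^k$, which is exactly the constant in \eqref{eqn:main:k=1} and \eqref{eqn:main:k}, and $\sup_{|z|=r} \phi([v^\theta]_{A_s}) \leq \phi\big(4^{\min\{1,s-1\}/\eta'} [w^{\theta\eta}]_{A_s}^{1/\eta}\big)$ since $\phi$ is increasing.

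The step I expect to be the main obstacle — or at least the one requiring the most care — is the weight estimate $e^{\mu b} \in A_s$-reverse-Hölder with the sharp constant $4^{\cdots}$ and the correct interplay between the exponent $\theta$, the self-improvement gap $\min\{1, s-1\}$, and $\eta'$. One must handle both factors in the $A_s$ characteristic (the average of $v^\theta$ and the average of $v^{-\theta/(s-1)}$), which is why the $\min\{1, s-1\}$ appears: the dual exponent is smaller, so the admissible $\mu$ is dictated by whichever of $\theta$ or $\theta/(s-1)$ is larger. A secondary technical point is justifying the contour-integral manipulations rigorously — that $z \mapsto T_z f(x)$ is holomorphic and that the Minkowski/Fubini interchange is legitimate — which one handles by a density/truncation argument (e.g. first for bounded $b$ with compact support, or by noting the integrand is continuous in $z$ and dominated on the circle), plus the observation that the bound obtained does not depend on these qualitative assumptions. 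Everything else is bookkeeping.
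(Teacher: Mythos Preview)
Your proposal is correct and follows essentially the same route as the paper: conjugate $T$ by $e^{zb}$, use Cauchy's formula on a circle of radius $\delta=\min\{1,s-1\}/(\eta'\theta\|b\|_{\BMO})$, and reduce everything to the weight estimate $[w^\theta e^{\theta(\Re z)b}]_{A_s}\le 4^{\min\{1,s-1\}/\eta'}[w^{\theta\eta}]_{A_s}^{1/\eta}$, which the paper packages as two lemmas (first $[e^{\lambda b}]_{A_s}\le 4^{|\lambda|\|b\|_{\BMO}}$ for $|\lambda|\|b\|_{\BMO}\le\min\{1,s-1\}$, then H\"older's inequality for the product $w^\theta\cdot e^{\theta(\Re z)b}$). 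Your description of the weight lemma is slightly informal (the ``reverse-H\"older-type'' language and the display with a question mark), but the mechanism you identify --- Jensen on $\fint_Q e^{|b-b_Q|}\le 2$ to control $e^{\lambda\eta' b}$ in $A_s$, then H\"older with exponents $\eta,\eta'$ --- is exactly what the paper does.
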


\subsection{Proof of Theorem \ref{thm:main:I}}
It is well-known that $\BMO$ functions and $A_\infty$ weights are related. Indeed one can show that if $w\in A_\infty$, then $\log w\in \BMO$. The converse is ``almost'' true: if $b\in \BMO$, then $e^{\lambda\,b}\in A_\infty$ provided $|\lambda|$ is sufficiently small. A first quantitative version of this result was given by Pereyra in \cite[Lemma 3]{pereyra}. We present a similar quantification, computing precisely the parameters involved in terms of $\|b\|_{\BMO}$.

\begin{lemma}\label{lemma:BMO->Ap}
Let $b \in  \BMO$. Then $e^{\lambda\,b}\in A_{1+|\lambda|\,\|b\|_{ \BMO}}$ for every $|\lambda|\le \|b\|_{\BMO}^{-1}$; moreover,
\begin{equation}\label{eqn:BMO->Ap:1}
\big[e^{\lambda\,b}\big]_{A_{1+|\lambda|\,\|b\|_{ \BMO}}} \leq 4^{|\lambda|\,\|b\|_{ \BMO}}.
\end{equation}
In particular, for each $1\le p<\infty$ and $\lambda\in\mathbb{R}$ verifying
\begin{equation}\label{eqn:BMO->Ap:2}
|\lambda| \leq \frac{\min\left\{1, p-1 \right\}}{\|b\|_{ \BMO}},
\end{equation}
we have
$e^{\lambda\,b} \in A_p$ and
\begin{equation}\label{eqn:BMO->Ap:3}
\big[e^{\lambda\,b}\big]_{A_p} \leq 4^{|\lambda|\,\|b\|_{ \BMO}}.
\end{equation}
\end{lemma}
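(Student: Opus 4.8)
The plan is to estimate the $A_p$ characteristic of $e^{\lambda b}$ directly from the definition, using only the exponential integrability encoded in the $\BMO$ norm (i.e.\ the localized $\exp L$ norm), without invoking the John--Nirenberg inequality. Fix a cube $Q$ and write $b_Q$ for the average of $b$ over $Q$. The key observation is that $b-b_Q$ has $\exp L$ norm over $Q$ at most $\|b\|_{\BMO}$, which by the definition of $\|\cdot\|_{\exp L,Q}$ means $\fint_Q e^{|b-b_Q|/\|b\|_{\BMO}}\,dx\le 2$. So for any exponent $\mu$ with $|\mu|\le \|b\|_{\BMO}^{-1}$ we get, splitting into the sets where $b-b_Q$ is positive or negative and dominating by the full exponential,
\[
\fint_Q e^{\mu(b-b_Q)}\,dx \le \fint_Q e^{|\mu|\,|b-b_Q|}\,dx \le \fint_Q e^{|b-b_Q|/\|b\|_{\BMO}}\,dx \le 2.
\]

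Next I would set up the two factors in the $A_p$ quotient. Writing $e^{\lambda b}=e^{\lambda b_Q}e^{\lambda(b-b_Q)}$, the constant $e^{\lambda b_Q}$ cancels in the product
\[
\Big(\fint_Q e^{\lambda b}\,dx\Big)\Big(\fint_Q e^{-\lambda b/(p-1)}\,dx\Big)^{p-1}
=
\Big(\fint_Q e^{\lambda(b-b_Q)}\,dx\Big)\Big(\fint_Q e^{-\lambda(b-b_Q)/(p-1)}\,dx\Big)^{p-1}.
\]
To apply the bound above to the first factor I need $|\lambda|\le\|b\|_{\BMO}^{-1}$; to apply it to the second I need $|\lambda|/(p-1)\le\|b\|_{\BMO}^{-1}$, i.e.\ $|\lambda|\le(p-1)\|b\|_{\BMO}^{-1}$. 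Both are guaranteed precisely by the hypothesis $|\lambda|\le\min\{1,p-1\}\|b\|_{\BMO}^{-1}$ in \eqref{eqn:BMO->Ap:2}. This gives each factor bounded by $2$, hence the product bounded by $2\cdot 2^{p-1}=2^{p}$; taking the supremum over $Q$ yields $[e^{\lambda b}]_{A_p}\le 2^{p}$. For the special value $p=1+|\lambda|\,\|b\|_{\BMO}$ claimed in \eqref{eqn:BMO->Ap:1}, one checks the constraint \eqref{eqn:BMO->Ap:2} reduces (for $|\lambda|\le\|b\|_{\BMO}^{-1}$, so that $p-1\le 1$) to $|\lambda|\le|\lambda|$, which holds, and $2^{p}=2^{1+|\lambda|\,\|b\|_{\BMO}}\le 4^{|\lambda|\,\|b\|_{\BMO}}$ since $|\lambda|\,\|b\|_{\BMO}\ge$ the relevant lower bound; I'd double-check the exact exponent bookkeeping, but the point is that $2\cdot 2^{p-1}$ is exactly $4^{|\lambda|\,\|b\|_{\BMO}}$ when $p-1=|\lambda|\,\|b\|_{\BMO}$. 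The same computation gives \eqref{eqn:BMO->Ap:3} under \eqref{eqn:BMO->Ap:2}, since there $p-1\ge|\lambda|\,\|b\|_{\BMO}$ forces the exponent to only improve.

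The only genuinely delicate point is the constant bookkeeping: making sure the ``$2$'' coming from the normalization $\fint_Q e^{|f|/\lambda}\le 2$ in the definition of $\|\cdot\|_{\exp L,Q}$ propagates correctly through the power $p-1$, and that the resulting $2\cdot 2^{p-1}$ is rewritten as $4^{|\lambda|\,\|b\|_{\BMO}}$ exactly at $p=1+|\lambda|\,\|b\|_{\BMO}$ and is dominated by it for larger $p$ (using $p-1\le 1$ in the first regime so that monotonicity in $p$ of $[w]_{A_p}$ from Proposition \ref{prop:weights}(a) could also be invoked). Everything else is a one-line application of the definition of the localized $\exp L$ norm; in particular, no reverse H\"older or John--Nirenberg inequality is used, which is the whole point of working with $\BMO$ rather than $BMO$.
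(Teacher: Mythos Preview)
Your overall structure matches the paper's proof: reduce to $b-b_Q$, bound the two averages in the $A_p$ product separately using only the $\exp L$ normalization, and then for general $p$ invoke the monotonicity $[w]_{A_p}\le [w]_{A_q}$ for $p\ge q$. However, the constant bookkeeping you flag as ``to double-check'' is genuinely wrong, and fixing it requires one idea you are missing.

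At $p=1+|\lambda|\,\|b\|_{\BMO}$ your bound on the product is $2\cdot 2^{p-1}=2\cdot 2^{|\lambda|\,\|b\|_{\BMO}}$, not $4^{|\lambda|\,\|b\|_{\BMO}}=2^{2|\lambda|\,\|b\|_{\BMO}}$; these agree only when $|\lambda|\,\|b\|_{\BMO}=1$, and for $|\lambda|\,\|b\|_{\BMO}<1$ your bound is strictly larger (e.g.\ as $\lambda\to 0$ yours tends to $2$, the claimed bound to $1$). Likewise your direct bound $2^p$ for general $p$ grows with $p$ and cannot be dominated by $4^{|\lambda|\,\|b\|_{\BMO}}\le 4$. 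The missing step is Jensen's inequality on the \emph{first} factor: with $\|b\|_{\BMO}=1$ and $0<|\lambda|\le 1$, the concavity of $t\mapsto t^{|\lambda|}$ gives
\[
\fint_Q e^{|\lambda|\,|b-b_Q|}\,dx
=\fint_Q \big(e^{|b-b_Q|}\big)^{|\lambda|}\,dx
\le \Big(\fint_Q e^{|b-b_Q|}\,dx\Big)^{|\lambda|}\le 2^{|\lambda|},
\]
so the first factor is $\le 2^{|\lambda|}$ rather than merely $\le 2$. The second factor (at $q=1+|\lambda|$, where $q'-1=|\lambda|^{-1}$) is still $\le 2$, and $2^{|\lambda|}\cdot 2^{q-1}=2^{|\lambda|}\cdot 2^{|\lambda|}=4^{|\lambda|}$, which is exactly \eqref{eqn:BMO->Ap:1}. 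Then \eqref{eqn:BMO->Ap:3} follows, as you suggest, from Proposition~\ref{prop:weights}(a), since \eqref{eqn:BMO->Ap:2} forces $1+|\lambda|\,\|b\|_{\BMO}\le p$.
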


\begin{proof}
By homogeneity considerations we may assume that $\|b\|_{ \BMO} = 1$. Then, for every cube $Q$, we have
\begin{equation}\label{eqn:b-BMO-1}
\fint_Q e^{|b - b_Q|} \,dx \leq 2.
\end{equation}
Now, given $|\lambda|\le 1$, let $q=1+|\lambda|$. We have to prove that $e^{\lambda\,b}\in A_{q}$ with $\big[e^{\lambda\,b}\big]_{A_q}\le 4^{|\lambda|}$.

The case $\lambda=0$ is trivial. Assume otherwise that $0<|\lambda| \le 1$. By Jensen's inequality and \eqref{eqn:b-BMO-1} we have
\begin{align} \label{eqn-AP-BMO1}
\fint_Q e^{|\lambda|\, |b- b_Q|} \,dx \leq \left(\fint_Q e^{|b-b_Q|} \,dx\right)^{|\lambda|}\le 2^{|\lambda|}.
\end{align}
On the other hand, since $q'-1=|\lambda|^{-1}$, \eqref{eqn:b-BMO-1} implies that
\begin{align}\label{eqn-AP-BMO2}
\fint_Q e^{|\lambda|\, |b-b_Q|\, (q' -1)} \,dx
=
\fint_Q e^{|b-b_Q|} \,dx \leq 2.
\end{align}
These estimates and the fact that $q\le 2$ imply that
\begin{align*}
\left(\fint_Q e^{\lambda\,b} \,dx\right)\, &
\left(\fint_Q e^{\lambda\,b\,(1-q')} \,dx\right)^{q-1}\\
& =
\left(\fint_Q e^{\lambda\,(b-b_Q)} \,dx\right)\,
\left(\fint_Q e^{\lambda\,(b-b_Q)(1-q')} \,dx\right)^{q-1}
\\
 & \le
\left(\fint_Q e^{|\lambda|\,|b-b_Q|} \,dx\right)\,
\left(\fint_Q e^{|\lambda|\,|b-b_Q|(q'-1)} \,dx\right)^{q-1}
\\
&\le
2^{|\lambda|}\,2^{q-1}
=
4^{|\lambda|}.
\end{align*}
Taking the supremum over all the cubes $Q\subset\mathbb{R}^n$ we conclude that $e^{\lambda\,b}\in A_q$ with $\big[e^{\lambda\,b}\big]_{A_q}\le 4^{|\lambda|}$.

Let us now obtain \eqref{eqn:BMO->Ap:3}. The case $p=1$ is trivial since $\lambda=0$. Suppose next that $1<p\le 2$; in this case, \eqref{eqn:BMO->Ap:2} implies that $1+|\lambda|\,\|b\|_{ \BMO}\le p$ and $|\lambda|\le  \|b\|_{ \BMO}^{-1}$. Then, we can apply \eqref{eqn:BMO->Ap:1} and Proposition \ref{prop:weights} $(a)$ to obtain \eqref{eqn:BMO->Ap:3}. Finally, if $2<p<\infty$ , we observe that \eqref{eqn:BMO->Ap:2} implies that $|\lambda|\le  \|b\|_{ \BMO}^{-1}$ and therefore \eqref{eqn:BMO->Ap:1} holds. Notice that $1+|\lambda|\,\|b\|_{ \BMO}\le 2<p$  and we can get the desired estimate as in the previous case.
\end{proof}

\begin{lemma}\label{lemma:BMO->Ap:prod}
Fix $1 < r < \infty$ and $1 < \eta < \infty$. If $w^\eta\in A_r$ and $b\in \BMO$, then $w\,e^{\lambda\,b}\in A_r$ for every $\lambda\in\mathbb{R}$ verifying
\begin{equation}\label{eqn:BMO->Ap:prod:1}
|\lambda| \leq \frac{\min \left\{ 1, r-1 \right\}}{\eta'\, \|b\|_{ \BMO}}.
\end{equation}
Furthermore,
\begin{equation}\label{eqn:BMO->Ap:prod:2}
\big[w\,e^{\lambda\,b}\big]_{A_r} \leq [w^\eta]_{A_r}^\frac1{\eta}\, 4^{|\lambda|\,\|b\|_{\BMO}}.
\end{equation}
\end{lemma}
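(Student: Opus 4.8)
The strategy is to factor the product weight $w\,e^{\lambda b}$ inside the $A_r$ quotient and estimate each factor by a H\"older inequality tuned to the exponents $\eta$ and $\eta'$. First I would reduce to $\|b\|_{\BMO}=1$ by homogeneity in $b$ (noting that both sides of \eqref{eqn:BMO->Ap:prod:2} scale correctly under $b\mapsto b/\|b\|_{\BMO}$, $\lambda\mapsto\lambda\|b\|_{\BMO}$). Then, fixing a cube $Q$, I would write the $A_r$ quotient for $w\,e^{\lambda b}$ as
\[
\Big(\fint_Q w\,e^{\lambda b}\,dx\Big)\Big(\fint_Q (w\,e^{\lambda b})^{1-r'}\,dx\Big)^{r-1}
\]
and apply H\"older with exponents $\eta,\eta'$ to each average: $\fint_Q w\,e^{\lambda b}\le (\fint_Q w^\eta)^{1/\eta}(\fint_Q e^{\lambda\eta' b})^{1/\eta'}$, and similarly $\fint_Q (w\,e^{\lambda b})^{1-r'}\le (\fint_Q w^{\eta(1-r')})^{1/\eta}(\fint_Q e^{\lambda\eta'(1-r')b})^{1/\eta'}$.

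Next I would recombine. Raising the second bound to the power $r-1$ and multiplying, the $w$-factors assemble into $\big([w^\eta]_{A_r}\big)^{1/\eta}$ after taking the supremum over $Q$. The exponential factors assemble into
\[
\Big(\fint_Q e^{\lambda\eta' b}\Big)^{1/\eta'}\Big(\fint_Q e^{\lambda\eta'(1-r')b}\Big)^{(r-1)/\eta'}
=\Big[\Big(\fint_Q e^{\mu b}\Big)\Big(\fint_Q e^{\mu(1-r')b}\Big)^{r-1}\Big]^{1/\eta'}
\]
where $\mu=\lambda\eta'$. The bracketed quantity is exactly the $A_r$ quotient of $e^{\mu b}$ on $Q$. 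Now the hypothesis \eqref{eqn:BMO->Ap:prod:1} says precisely that $|\mu|=|\lambda|\eta'\le \min\{1,r-1\}/\|b\|_{\BMO}$, which is condition \eqref{eqn:BMO->Ap:2} of Lemma~\ref{lemma:BMO->Ap} with $p=r$. Hence that lemma gives $e^{\mu b}\in A_r$ with $[e^{\mu b}]_{A_r}\le 4^{|\mu|\,\|b\|_{\BMO}}=4^{|\lambda|\eta'\|b\|_{\BMO}}$, so the bracketed term is $\le 4^{|\lambda|\eta'\|b\|_{\BMO}}$ uniformly in $Q$, and raising to $1/\eta'$ yields the factor $4^{|\lambda|\,\|b\|_{\BMO}}$ in \eqref{eqn:BMO->Ap:prod:2}. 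Taking the supremum over all cubes $Q$ gives both $w\,e^{\lambda b}\in A_r$ and the stated bound.

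There is essentially no deep obstacle here; the only point requiring care is the bookkeeping of exponents — making sure the power $r-1$ is distributed correctly so that the exponential piece really is the $A_r$ quotient for the \emph{single} exponent $\mu=\lambda\eta'$ (not $\mu$ and $\mu(r-1)$ separately), and checking that the admissibility threshold in \eqref{eqn:BMO->Ap:prod:1} is exactly what makes $\mu$ fall in the range allowed by Lemma~\ref{lemma:BMO->Ap}. One should also observe that H\"older is being applied to a product of a nonnegative function with a positive one, so no integrability issue arises beyond $w^\eta\in A_r$ (which in particular gives $w^\eta$, hence $w$, locally integrable, and $w^{\eta(1-r')}$ locally integrable), ensuring every average above is finite.
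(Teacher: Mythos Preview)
Your proposal is correct and follows essentially the same approach as the paper: apply H\"older's inequality with exponents $\eta,\eta'$ to each average in the $A_r$ quotient of $w\,e^{\lambda b}$, regroup the $w$-part into $[w^\eta]_{A_r}^{1/\eta}$ and the exponential part into $[e^{\lambda\eta' b}]_{A_r}^{1/\eta'}$, and then invoke Lemma~\ref{lemma:BMO->Ap} for the latter since $|\lambda|\eta'\le \min\{1,r-1\}/\|b\|_{\BMO}$. The only cosmetic differences are that the paper does not bother normalizing $\|b\|_{\BMO}=1$ and writes the H\"older step in one chain rather than separating the factors, but the argument is the same.
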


\begin{proof}
Let $U = w\, e^{\lambda\,b}$. H\"older's inequality yields
\begin{align*}
&\left(\fint_Q U(x) \,dx \right)\,
\left(\fint_Q U^{1-r'} \,dx\right)^{r-1}
=
\left(\fint_Q w\, e^{\lambda\,b} \,dx \right)\,
\left(\fint_Q w^{1-r'}\,e^{\lambda\,b\,(1-r')} \,dx\right)^{r-1}
\\
&\qquad\le
\left( \fint_Q w^\eta \,dx \right)^{\frac1\eta}\, \left( \fint_Q e^{\lambda\,\eta'\,b} \,dx \right)^{\frac1{\eta'}} \,
\left( \fint_Q  \big(w^\eta\big)^{1-r'}\,dx \right)^{\frac{r-1}{\eta}}\, \left( \fint_Q e^{(1-r')\,\lambda\,\eta'\,b} \,dx \right)^{\frac{r-1}{\eta'}}
\\
&\qquad\le
\big[w^\eta\big]_{A_r}^{\frac1\eta}\,\big[e^{\lambda\,\eta'\,b}\big]_{A_r}^{\frac1{\eta'}}
\le
\big[w^\eta\big]_{A_r}^{\frac1\eta}\,4^{|\lambda|\,\|b\|_{ \BMO}},
\end{align*}
where we have used that $|\lambda|\,\eta'\le \min\{1,r-1\}/\|b\|_{ \BMO}$ which, by Lemma \ref{lemma:BMO->Ap},  implies that $e^{\lambda\,\eta'\,b}\in A_r$ with $\big[e^{\lambda\,\eta'\,b}\big]_{A_r}\le 4^{|\lambda|\,\eta'\,\|b\|_{ \BMO}}$. This readily leads to the desired estimate.
\end{proof}

\begin{proof}[Proof of Theorem \ref{thm:main:I}]
We start with the case of a linear operator $T$. Fix $b\in \BMO$ and assume that $\|b\|_{ \BMO} = 1$. Fix $1<\eta<\infty$ and $w$ such that  $w^{\eta\,\theta}\in A_s$. Set
$$
\delta := \frac{\min \left\{ 1, s-1 \right\}}{\theta\, \eta'}.
$$
Write, whenever it makes sense, $\Psi(z)f(x): = e^{-z\,b(x)} T (e^{z\,b}\,f)(x)$.
Fix $z\in\mathbb{C}$ such that $|z|\le\delta$ and set $W = w\,e^{-\Re(z)\,b}$. By Lemma \ref{lemma:BMO->Ap:prod} we have that
$$
\big[W^\theta\big]_{A_s}
=
\big[w^\theta\,e^{-\Re(z)\,b\,\theta} \big]_{A_s}
\le
\big[w^{\theta\,\eta}\big]_{A_s}^{\frac1{\eta}}\,4^{\theta\,|\Re(z)|}
\le
\big[w^{\theta\,\eta}\big]_{A_s}^{\frac1{\eta}}\,4^{\theta\,\delta}
$$
since our choice of $\delta$ gives that
$$
\theta\,|\Re(z)|
\le
\theta\,\delta
=
\frac{\min\{1,s-1\}}{\eta'}.
$$
Thus, we can apply \eqref{eqn:main:hyp} and obtain
$$
\|\Psi(z) f \|_{L^q (w^q)}
=
\|T (e^{z\,b}\, f)\|_{L^q (W^q)}
\le
\phi \left( \big[W^\theta\big]_{A_s} \right) \|e^{z\,b}f\|_{L^p (W^p)},
$$
hence
\begin{equation}\label{est:Psi-z}
\|\Psi(z) f \|_{L^q (w^q)}\le
\phi \left( \big[w^{\theta\,\eta}\big]_{A_s}^{\frac1{\eta}}\,4^{\theta\,\delta}\right) \|f\|_{L^p (w^p)}.
\end{equation}
On the other hand, we notice that
$$
\Psi '(z)f(x) = -b(x)\,e^{-z\,b(x)}\, T (e^{z\,b}\,f)(x) + e^{-z\,b(x)}\,T( b\, e^{z\,b}\, f)(x).
$$
and by the Cauchy Integral Formula, we have that
$$
[T,b] f(x)
=
\Psi'(0)f(x)
= \frac{1}{2\pi i} \int_{|z|=\delta} \frac{\Psi(z)f(x)}{z^2}\, dz.
$$
See \cite{ABKP} for a rigorous proof of this commonly used formula.

Next, we use Minkowski's inequality and \eqref{est:Psi-z} to conclude that
\begin{align*}
\|[T,b] f\|_{L^q(w^q)}
&\leq \frac{1}{2\pi \delta^2} \int_{|z| = \delta} \|\Psi (z) f\|_{L^q (w^q)} |dz|
\\
&\le
\frac{1}{2\pi \delta^2} \int_{|z| = \delta} \phi \left( \big[w^{\theta\,\eta}\big]_{A_s}^{\frac1{\eta}}\,4^{\theta\,\delta}\right) \|f\|_{L^p (w^p)} |dz|
\\
&\le
\frac{1}{\delta}\,\phi \left( \big[w^{\theta\,\eta}\big]_{A_s}^{\frac1{\eta}}\,4^{\theta\,\delta}\right) \|f\|_{L^p (w^p)}.
\end{align*}
Plugging in the value of the parameter $\delta$ in the previous estimate we obtain \eqref{eqn:main:k=1}.

For $T$ linearizable we proceed in the same way but replacing $T$ by the $\mathbb{B}$-valued linear operator $\mathcal{T}$. In such a case we need to work with the Bochner space $L^q_{\mathbb{B}}(w^q)$ whose associated norm is $\|f\|_{L^q_{\mathbb{B}}(w^q)}=\big\|\|f\|_{\mathbb{B}}\big\|_{L^q(w^q)}$. This is still a Banach space and then in the previous argument it is legitimate to use Minkowski's inequality. The rest of the argument extends \textit{mutatis mutandis} and the details are left to the interested reader.

We now consider the $k$th order commutator. As before, it suffices to assume that $T$ is linear as the linearizable case follows using the same scheme. The proof is very similar after observing that the Cauchy Integral Formula gives
$$
T_b^k f(x)
=
\Psi^{(k)}(0)f(x)
= \frac{k!}{2\pi i} \int_{|z|=\delta} \frac{\Psi(z)f(x)}{z^{k+1}}\, dz.
$$
This and the previous argument readily leads to the desired estimate.
\end{proof}

\subsection{Commutators in general settings}\label{section:Comm-general}
As mentioned before, our main result can be easily extended to more general settings. We can replace the Euclidean setting and the Muckenhoupt classes associated with the basis of cubes by any measure space (we do not even need to have a distance) and basis formed by  collection of sets. The commutator result is abstract and needs a few ingredients. In several interesting applications, however, one needs to get back to friendlier settings.

Let $(\mathscr{X},\Sigma,\mu)$ be a measure space where $\Sigma$ is a $\sigma$-algebra and $\mu$ is a non-negative measure.  A basis $\mathscr{B}\subset \Sigma$ is a collection of sets such that each $B\in\mathscr{B}$ is $\mu$-measurable and $0<\mu(B)<\infty$.
We can define the $A_p$ classes associated with $\mathscr{B}$. A weight $w$ is a non-negative function such that $\int_B w\,d\mu<\infty$ for every $B \in \mathscr{B}$. We say that a weight $w \in A_{p, \mathscr{B}}$, $1 < p < \infty$, if
$$
[w]_{A_{p,\mathscr{B}}}:=\sup_{B \in \mathscr{B}}\left( \fint_B w \,d\mu \right) \left( \fint_B w^{1- p'} \,d\mu \right)^{p-1} < \infty.
$$
Notice that for our results we do not define the ``problematic'' class $A_{1,\mathscr{B}}$. As before, for a given weight $w$, we write $L^p(w)=L^p(w\,d\mu)$. The following properties are trivial and we omit the proof.
\begin{proposition}\label{prop:weights:B}
\
\begin{list}{$(\theenumi)$}{\usecounter{enumi}\leftmargin=1cm \labelwidth=1cm \itemsep=0.2cm \topsep=0cm \renewcommand{\theenumi}{\alph{enumi}}}

\item $A_{p,\mathscr{B}}\subset A_{q,\mathscr{B}}$ for $1<p\le q<\infty$ with $[w]_{A_{p,\mathscr{B}}}\ge [w]_{A_{q,\mathscr{B}}}$.

\item If $1<p<\infty$, $w\in A_{p,\mathscr{B}}$ if and only if $w^{1-p'}\in
A_{p',\mathscr{B}}$.

\end{list}
\end{proposition}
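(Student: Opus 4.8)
The plan is to verify both items by elementary computation with conjugate exponents and Jensen's inequality; the point is that nothing about the basis $\mathscr{B}$ enters beyond $0<\mu(B)<\infty$, which is all that is needed for the averages $\fint_B$ to make sense, so the arguments are literally the same as for the corresponding parts of Proposition \ref{prop:weights} in the Euclidean case.

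For part $(b)$ I would set $v=w^{1-p'}$ and first record the arithmetic identities that follow from $\tfrac1p+\tfrac1{p'}=1$: namely $(p')'=p$, $p-1=\tfrac1{p'-1}$ (equivalently $(p-1)(p'-1)=1$), and hence $1-(p')'=1-p$ with $(1-p')(1-p)=1$. The last one gives $v^{1-(p')'}=w^{(1-p')(1-p)}=w$. Substituting into the definition of the $A_{p',\mathscr{B}}$ constant and using $(p-1)(p'-1)=1$ to pull the exponent out, one gets, for every $B\in\mathscr{B}$,
\[
\Big(\fint_B v\,d\mu\Big)\Big(\fint_B v^{1-(p')'}\,d\mu\Big)^{p'-1}
=\Big(\fint_B w^{1-p'}\,d\mu\Big)\Big(\fint_B w\,d\mu\Big)^{p'-1}
=\left[\Big(\fint_B w\,d\mu\Big)\Big(\fint_B w^{1-p'}\,d\mu\Big)^{p-1}\right]^{p'-1}.
\]
Taking the supremum over $B\in\mathscr{B}$ and using that $t\mapsto t^{p'-1}$ is increasing yields $[w^{1-p'}]_{A_{p',\mathscr{B}}}=[w]_{A_{p,\mathscr{B}}}^{\,p'-1}$; in particular one side is finite precisely when the other is, which is the asserted equivalence.

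For part $(a)$ I would fix $1<p\le q<\infty$, so that $q'\le p'$, and put $a=p'-1\ge q'-1=b>0$, noting $p-1=\tfrac1a$ and $q-1=\tfrac1b$, so that $\big(\fint_B w^{1-q'}\,d\mu\big)^{q-1}=\big(\fint_B w^{-b}\,d\mu\big)^{1/b}$ and likewise with $a$. For each $B\in\mathscr{B}$, Jensen's inequality applied to the convex function $t\mapsto t^{a/b}$ (convex since $a/b\ge1$) with the probability measure $d\mu/\mu(B)$ on $B$ gives the power-mean monotonicity $\big(\fint_B w^{-b}\,d\mu\big)^{1/b}\le\big(\fint_B w^{-a}\,d\mu\big)^{1/a}$, i.e.
\[
\Big(\fint_B w\,d\mu\Big)\Big(\fint_B w^{1-q'}\,d\mu\Big)^{q-1}\le\Big(\fint_B w\,d\mu\Big)\Big(\fint_B w^{1-p'}\,d\mu\Big)^{p-1}\le [w]_{A_{p,\mathscr{B}}}.
\]
Taking the supremum over $B\in\mathscr{B}$ gives $[w]_{A_{q,\mathscr{B}}}\le[w]_{A_{p,\mathscr{B}}}$ and hence, a fortiori, the inclusion $A_{p,\mathscr{B}}\subset A_{q,\mathscr{B}}$.

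There is no genuine obstacle here: both parts are self-dual or monotonicity statements that reduce to the conjugate-exponent identities above and to a single application of Jensen's inequality, and the passage from the Euclidean basis of cubes to an arbitrary basis $\mathscr{B}$ changes nothing, since only the finiteness of $\mu(B)$ is used.
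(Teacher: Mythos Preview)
Your proof is correct. The paper itself omits the proof entirely, stating that ``the following properties are trivial and we omit the proof,'' so there is no argument to compare against; your elementary verification via conjugate-exponent identities for $(b)$ and the power-mean (Jensen) inequality for $(a)$ is exactly the kind of routine computation the authors have in mind, and your observation that nothing about the basis $\mathscr{B}$ is used beyond $0<\mu(B)<\infty$ is precisely the point.
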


We can also define a $\BMO$ space associated with the family $\mathscr{B}$. We say that a measurable function  $f \in  \BMO_{\mathscr{B}}(\mathscr{X})$ or simply $f \in  \BMO_{\mathscr{B}}$ (when the underlying space $\mathscr{X}$ is clear from the context) provided $\int_B |f|\,d\mu<\infty$ for every $B \in \mathscr{B}$ and
$$
\|f\|_{\BMO_{\mathscr{B}}}
:=\sup_{B \in \mathscr{B}} \|f-f_B\|_{{\rm exp}\,L,B}<\infty.
$$
Clearly, when $\mathscr{X}=\mathbb R^n$, $\mathscr{B}$ denotes the basis of cubes and $d\mu=dx$, we have
$\BMO_{\mathscr{B}}=\BMO=BMO$.

The notation $f_B$ is, as before, for the $\mu$-average of $f$ on $B$, while the norm $\|f\|_{{\rm exp}\,L,B}$ is defined as
$$
\inf \left\{ \lambda > 0 : \fint_B \bigg(e^{\frac{|f|}{\lambda}}-1\bigg) \,d\mu \leq 1 \right\}
=
\inf \left\{ \lambda > 0 : \fint_B e^{\frac{|f|}{\lambda}} \,d\mu \leq 2 \right\}.
$$
This definition of $\BMO_{\mathscr{B}}$ carries the John-Nirenberg inequality as part of the definition and therefore it is what we need for our purposes. One could also define a $BMO_{\mathscr{B}}$ space using $L^1$ averages. In general, we have
$$\BMO_{\mathscr{B}}\subset BMO_{\mathscr{B}},$$
and, unless some further properties are assumed, one cannot expect these two spaces to coincide.

We have the following commutator result.

\begin{theorem} \label{thm:main:B}
Let $T$ be either a linear or a linearizable operator. Fix $1 \leq p, q < \infty$, $\theta > 0$, and $1 < s< \infty$. Suppose that  there exists  an increasing function $\phi: [1, \infty) \longrightarrow  [0, \infty)$ such that for each $f \in L^p (w^p)$ we have
\begin{equation}\label{eqn:main:hyp:B}
\|Tf\|_{L^q (w^q)}
\leq
\phi \left( \big[w^\theta\big]_{A_{s, \mathscr{B}}} \right) \|f\|_{L^p (w^p)},
\qquad
\forall\, w^\theta \in A_{s,\mathscr{B}}.
\end{equation}
Then, for each $b \in  \BMO_{\mathscr{B}}$, for each weight $w$ and $1<\eta<\infty$ such that $w^{\theta\,\eta}\in A_{s,\mathscr{B}}$ we have
\begin{equation}\label{eqn:main:k=1:B}
\big\|[T,b]f \big\|_{L^q (w^q)}
\leq
\frac{\eta'\,\theta}{\min \left\{ 1, s-1 \right\}}\,
\phi
\left( 4^{ \frac{\min\{1,s-1\}}{\eta'}}\, \big[w^{\theta\,\eta}\big]_{A_{s,\mathscr{B}}}^{1/\eta} \right)\,
\|b\|_{\BMO_\mathscr{B}}\, \|f\|_{L^p (w^p)};
\end{equation}
and, for every $k\ge 2$,
\begin{equation}\label{eqn:main:k:B}
\big\|T_b^k f \big\|_{L^q (w^q)}
\leq
k!\left(\frac{\eta'\,\theta}{\min \left\{ 1, s-1 \right\}}\right)^k\,
\phi
\left( 4^{ \frac{\min\{1,s-1\}}{\eta'}}\, \big[w^{\theta\,\eta}\big]_{A_{s,\mathscr{B}}}^{1/\eta} \right)\,
\|b\|_{\BMO_\mathscr{B}}^k\, \|f\|_{L^p (w^p)}.
\end{equation}
\end{theorem}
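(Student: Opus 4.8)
The plan is to repeat the proof of Theorem~\ref{thm:main:I} \emph{verbatim}: that argument never used any feature of $\mathbb{R}^n$, of Lebesgue measure, or of the basis of cubes beyond what is already built into the definitions, so it transfers to any measure space $(\mathscr{X},\Sigma,\mu)$ and any basis $\mathscr{B}$. Concretely, the proof of Theorem~\ref{thm:main:I} rests on exactly three ingredients --- the quantitative bound $\big[e^{\lambda b}\big]_{A_p}\le 4^{|\lambda|\,\|b\|_{\BMO}}$ of Lemma~\ref{lemma:BMO->Ap}, the product-weight estimate of Lemma~\ref{lemma:BMO->Ap:prod}, and the Cauchy integral representation of the commutator --- so I would first transfer the two lemmas to the abstract setting and then run the Cauchy trick unchanged.

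First I would prove the analogue of Lemma~\ref{lemma:BMO->Ap}: for $b\in\BMO_{\mathscr{B}}$ with $\|b\|_{\BMO_{\mathscr{B}}}=1$ one has, by definition of the exponential norm, $\fint_B e^{|b-b_B|}\,d\mu\le 2$ for every $B\in\mathscr{B}$, and the proof of Lemma~\ref{lemma:BMO->Ap} goes through word for word: Jensen's inequality on the probability space $(B,\fint_B\,d\mu)$ gives $\fint_B e^{|\lambda|\,|b-b_B|}\,d\mu\le 2^{|\lambda|}$ for $|\lambda|\le 1$; the pointwise bound $e^{\lambda(b-b_B)}\le e^{|\lambda|\,|b-b_B|}$ handles the $A_q$-computation for $q=1+|\lambda|$; and the passage to a general $p\ge q$ uses only the monotonicity $A_{q,\mathscr{B}}\subset A_{p,\mathscr{B}}$ with non-increasing constant, which is Proposition~\ref{prop:weights:B}(a). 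The crucial point to check --- and the reason the result holds in this generality --- is that none of these steps invokes the John--Nirenberg inequality, a reverse H\"older inequality, or the self-improvement of $A_p$; those are exactly the deep facts that the $\exp L$ definition of $\BMO_{\mathscr{B}}$ is designed to render unnecessary. One concludes $e^{\lambda b}\in A_{1+|\lambda|\,\|b\|_{\BMO_{\mathscr{B}}},\mathscr{B}}$ with constant $\le 4^{|\lambda|\,\|b\|_{\BMO_{\mathscr{B}}}}$ for $|\lambda|\le\|b\|_{\BMO_{\mathscr{B}}}^{-1}$, and hence $e^{\lambda b}\in A_{p,\mathscr{B}}$ with constant $\le 4^{|\lambda|\,\|b\|_{\BMO_{\mathscr{B}}}}$ whenever $|\lambda|\le\min\{1,p-1\}/\|b\|_{\BMO_{\mathscr{B}}}$. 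The analogue of Lemma~\ref{lemma:BMO->Ap:prod} then follows exactly as before: with $U=w\,e^{\lambda b}$ and $w^\eta\in A_{r,\mathscr{B}}$, one applies H\"older's inequality with exponents $\eta,\eta'$ to $\fint_B U\,d\mu$ and $\fint_B U^{1-r'}\,d\mu$, uses $w^\eta\in A_{r,\mathscr{B}}$ together with $e^{\lambda\eta' b}\in A_{r,\mathscr{B}}$ (legitimate since $|\lambda|\eta'\le\min\{1,r-1\}/\|b\|_{\BMO_{\mathscr{B}}}$, by the abstract Lemma~\ref{lemma:BMO->Ap}), and obtains $\big[w\,e^{\lambda b}\big]_{A_{r,\mathscr{B}}}\le\big[w^\eta\big]_{A_{r,\mathscr{B}}}^{1/\eta}\,4^{|\lambda|\,\|b\|_{\BMO_{\mathscr{B}}}}$.

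With the two lemmas available, the remainder is a copy of the proof of Theorem~\ref{thm:main:I}. Normalizing $\|b\|_{\BMO_{\mathscr{B}}}=1$, fixing $1<\eta<\infty$ and $w$ with $w^{\theta\eta}\in A_{s,\mathscr{B}}$, and setting $\delta=\min\{1,s-1\}/(\theta\eta')$ and $\Psi(z)f(x)=e^{-zb(x)}T(e^{zb}f)(x)$, the abstract product lemma gives, for $|z|\le\delta$ and $W=w\,e^{-\Re(z)\,b}$, that $\big[W^\theta\big]_{A_{s,\mathscr{B}}}\le\big[w^{\theta\eta}\big]_{A_{s,\mathscr{B}}}^{1/\eta}\,4^{\theta\delta}$, since $\theta|\Re(z)|\le\theta\delta=\min\{1,s-1\}/\eta'$; then hypothesis~\eqref{eqn:main:hyp:B} yields $\|\Psi(z)f\|_{L^q(w^q)}\le\phi\big(4^{\theta\delta}\big[w^{\theta\eta}\big]_{A_{s,\mathscr{B}}}^{1/\eta}\big)\|f\|_{L^p(w^p)}$ uniformly in $|z|\le\delta$. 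The Cauchy integral formula represents $[T,b]f=\Psi'(0)f$ and, for $k\ge2$, $T_b^k f=\Psi^{(k)}(0)f$ as contour integrals of $z^{-2}\Psi(z)f$ and $z^{-(k+1)}\Psi(z)f$ over $|z|=\delta$ (cf.\ \cite{ABKP}), so Minkowski's integral inequality combined with the uniform bound above delivers \eqref{eqn:main:k=1:B} and \eqref{eqn:main:k:B} once the value of $\delta$ is substituted. For a linearizable $T$ I would replace $T$ by the $\mathbb{B}$-valued linear operator $\mathcal{T}$ and argue in the Bochner space $L^q_{\mathbb{B}}(w^q)$, which is still Banach so that Minkowski still applies; the rest is unchanged. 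I do not foresee a genuine obstacle: the only thing that really must be verified is the transfer of the two weight lemmas, and since their proofs use nothing beyond Jensen's and H\"older's inequalities and Proposition~\ref{prop:weights:B}(a), they carry over without modification.
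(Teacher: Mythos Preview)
Your proposal is correct and matches the paper's own proof almost exactly: the paper simply states that ``one can follow the proof of Theorem~\ref{thm:main:I} line by line and the argument goes through with only notational changes,'' which is precisely what you outline. The only caveat the paper flags---and which you might add for completeness---is that since $A_{1,\mathscr{B}}$ is not defined in the abstract setting, the endpoint $p=1$ in the second part of Lemma~\ref{lemma:BMO->Ap} must be excluded (and the $\lambda=0$ case in the first part handled separately); but this is irrelevant to the proof of the theorem because only $A_{s,\mathscr{B}}$ with $s>1$ is ever invoked.
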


The proof is exactly as before with the only exception that in Lemma \ref{lemma:BMO->Ap} when $\lambda=1$ one has that $w\in A_{q,\mathscr{B}}$ for every $q>1$ (recall that we have not defined $A_{1,\mathscr{B}}$) and also that, in the second part of that result, the case $p=1$ is excluded. Taking all this into account, one can follow the proof of Theorem \ref{thm:main:I} line by line and the argument goes through with only notational changes.

\begin{remark}
If one forgets about the dependence on the constants, Theorem \ref{thm:main:B} can be re-stated in a qualitative way. Such a result will say that if $T$ is bounded from $L^p(w^p)$ to $L^q(w^q)$ for every $w$ such that $w^\theta \in A_{s,\mathscr{B}}$ then the commutators $T_b^k$ with $b\in \BMO_{\mathscr{B}}$ are bounded from $L^p(w^p)$ to $L^q(w^q)$ for every $w$ such that $w^{\theta\,\eta} \in A_{s,\mathscr{B}}$ for some $\eta>1$. 
\end{remark}

Examples of bases $\mathscr{B}$ in $\mathbb{R}^n$ with the Lebesgue measure are the sets of all cubes, or balls, or dyadic cubes or rectangles with sides parallel to the axes. In spaces of homogeneous type, one can also take balls, or Christ's dyadic cubes. All these classes have some additional features that allow one to refine the statement of Theorem \ref{thm:main:B}. We will discuss this in the following section.

\subsection{Commutators in the classical setting II}\label{section:Comm-class-II}
Our next goal is to sharpen Theorem \ref{thm:main:I} and exploit the fact that the Muckenhoupt weights have self-improving properties. We first notice that in that result the weighted estimates for $T$ are assumed to hold for every weight $w$ in the class $w^{\theta}\in A_s$. However, in the conclusion, the weighted estimates for the commutators hold provided $w^{\theta\,\eta}\in A_s$, where $\eta>1$, which is apparently stronger than $w^{\theta}\in A_s$. Nevertheless, the self-improvement of the weights gives that $w^{\theta}\in A_s$ which implies $w^{\theta\,\eta}\in A_s$ for some $\eta>1$ which is typically very close to $1$. This value of $\eta$ depends on $s$ and $\big[w^\theta \big]_{A_s}$ and has been precisely quantified in \cite{Perez-course}. Let us notice again that this property depends very much on the setting. In the abstract context of the previous section without any further assumptions there is no reason to think that $w^{\theta}\in A_s$ implies $w^{\theta\,\eta}\in A_s$.

\begin{theorem} \label{thm:main:II}
Let $T$ be either a linear or a linearizable operator. Fix $1 \leq p, q < \infty$, $\theta > 0$, and $1 < s< \infty$. Suppose that  there exist  an increasing function $\phi: [1, \infty) \longrightarrow  [0, \infty)$ such that for each $f \in L^p (w^p)$ we have
\begin{equation}\label{eqn:main:hyp:II}
\|Tf\|_{L^q (w^q)}
\leq
\phi \left( \big[w^\theta\big]_{A_s} \right) \|f\|_{L^p (w^p)},
\qquad
\forall\, w^\theta \in A_s.
\end{equation}
Then, for each $b \in  \BMO$ and for every $w^\theta \in A_s$, we have
\begin{align}\label{eqn:main:k=1:II}
\big\|[T, b]f   \big\|_{L^q (w^q)}
&\leq
\frac{2^{2\,\max\{s,s'\}+n+2}\,\theta}{\min \left\{ 1, s-1 \right\}}\,
\,\big[w^\theta\big]_{A_s}^{\max\{1,\frac1{s-1}\}}\, \\ \nonumber
&\qquad
\phi
\left( 4^{ \min\{1,s-1\}}\,2^s\, \big[w^{\theta}\big]_{A_s}\right)\,
\|b\|_{ \BMO}\, \|f\|_{L^p (w^p)}
;
\end{align}
and, for every $k\ge 2$,
\begin{align}\label{eqn:main:k:II}
\big\|T_b^k f \big\|_{L^q (w^q)}
&\leq
k! \left(\frac{2^{2\,\max\{s,s'\}+n+2}\,\theta}{\min \left\{ 1, s-1 \right\}}\right)^k\,
\big[w^\theta\big]_{A_s}^{k\,\max\{1,\frac1{s-1}\}}\, \\ \nonumber
 &\qquad \phi\left( 4^{ \min\{1,s-1\}}\,2^s\, \big[w^{\theta}\big]_{A_s}\right)\,
\|b\|_{ \BMO}^k\, \|f\|_{L^p (w^p)}.
\end{align}
\end{theorem}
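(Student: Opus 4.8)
The plan is to deduce Theorem~\ref{thm:main:II} from Theorem~\ref{thm:main:I} by supplying, for each admissible weight, a quantitatively controlled exponent $\eta>1$ coming from the self-improving (reverse H\"older) properties of Muckenhoupt weights, which in the classical Euclidean setting are available and have been sharply quantified in \cite{Perez-course}. No new analytic idea beyond Theorem~\ref{thm:main:I} is needed; the point is purely to trade the ``external'' parameter $\eta$ in that theorem for an explicit function of $[w^\theta]_{A_s}$, $s$ and $n$.

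First I would fix $w$ with $u:=w^\theta\in A_s$ and look at its dual weight $\sigma:=u^{1-s'}$, which by Proposition~\ref{prop:weights}$(f)$ (unwinding the definitions gives the precise constant) lies in $A_{s'}$ with $[\sigma]_{A_{s'}}=[u]_{A_s}^{1/(s-1)}$. Since $u$ and $\sigma$ are in $A_\infty$, the quantitative reverse H\"older inequality of \cite{Perez-course} provides exponents $1+\gamma_1>1$ and $1+\gamma_2>1$ with
$$
\gamma_1\ \gtrsim_{n,s}\ [u]_{A_s}^{-1},
\qquad
\gamma_2\ \gtrsim_{n,s}\ [\sigma]_{A_{s'}}^{-1}=[u]_{A_s}^{-1/(s-1)},
$$
and reverse H\"older constants bounded by $2$; shrinking $\gamma_1,\gamma_2$ if necessary (Proposition~\ref{prop:weights}$(b)$) I may assume $\gamma_1,\gamma_2\le 1$. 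I would then set $\eta:=1+\min\{\gamma_1,\gamma_2\}\in(1,2]$. Raising the two reverse H\"older estimates to the common smaller power $\eta$ via the monotonicity of $L^r$-averages, and then multiplying the bounds for $\fint_Q u^\eta$ and $(\fint_Q\sigma^\eta)^{s-1}$, where $\fint_Q\sigma^\eta=\fint_Q u^{\eta(1-s')}$, I obtain for every cube $Q$
$$
\Big(\fint_Q u^\eta\Big)\Big(\fint_Q u^{\eta(1-s')}\Big)^{s-1}
\le
2^{\eta s}\Big[\Big(\fint_Q u\Big)\Big(\fint_Q\sigma\Big)^{s-1}\Big]^{\eta}
\le
2^{\eta s}\,[u]_{A_s}^{\eta},
$$
hence $[w^{\theta\eta}]_{A_s}\le 2^{\eta s}[w^\theta]_{A_s}^{\eta}$, and in particular $[w^{\theta\eta}]_{A_s}^{1/\eta}\le 2^{s}[w^\theta]_{A_s}$.

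Now $w^{\theta\eta}\in A_s$, so I would simply apply Theorem~\ref{thm:main:I} with this $\eta$. Since $\phi$ is increasing and $\eta'>1$ forces $4^{\min\{1,s-1\}/\eta'}\le 4^{\min\{1,s-1\}}$, combining with the bound on $[w^{\theta\eta}]_{A_s}^{1/\eta}$ turns the argument of $\phi$ in \eqref{eqn:main:k=1} into $4^{\min\{1,s-1\}}\,2^s\,[w^\theta]_{A_s}$, exactly as in \eqref{eqn:main:k=1:II}, while the factors $\|b\|_{\BMO}$ and $\|f\|_{L^p(w^p)}$ are untouched. For the prefactor, with $\gamma:=\min\{\gamma_1,\gamma_2\}\le1$ one has $\eta'=1+\gamma^{-1}\le 2\gamma^{-1}$, and since $[w^\theta]_{A_s}\ge1$ the larger of the two exponents $1$ and $\tfrac1{s-1}$ dominates, so $\eta'\le 2\max\{\gamma_1^{-1},\gamma_2^{-1}\}\lesssim_{n,s}[w^\theta]_{A_s}^{\max\{1,\frac1{s-1}\}}$; this replaces $\eta'$ in the prefactor of Theorem~\ref{thm:main:I} and gives \eqref{eqn:main:k=1:II} up to the explicit value of the constant. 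The $k$th order case is identical, starting from \eqref{eqn:main:k} of Theorem~\ref{thm:main:I}: it produces the factor $k!$, the $k$th power of the prefactor, and the power $k\max\{1,\tfrac1{s-1}\}$ on $[w^\theta]_{A_s}$, i.e. \eqref{eqn:main:k:II}.

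The only obstacle is numerology: to pin down the explicit constant $2^{2\max\{s,s'\}+n+2}$ in \eqref{eqn:main:k=1:II}--\eqref{eqn:main:k:II} one must use the \emph{explicit} form of the reverse H\"older exponents in \cite{Perez-course}, invoked once for $u\in A_s$ and once for its dual $\sigma\in A_{s'}$ --- this is precisely where both the $\max\{s,s'\}$ appearing in the exponent of $2$ and the $\max\{1,\tfrac1{s-1}\}$ power on $[w^\theta]_{A_s}$ originate. Everything else is a direct substitution into Theorem~\ref{thm:main:I}.
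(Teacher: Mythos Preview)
Your proposal is correct and follows essentially the same approach as the paper: apply the quantitative reverse H\"older inequality (Lemma~\ref{RH-sharp}) to both $u=w^\theta\in A_s$ and its dual $\sigma=u^{1-s'}\in A_{s'}$, set $\eta$ equal to the minimum of the two resulting exponents, verify $[u^\eta]_{A_s}^{1/\eta}\le 2^s[u]_{A_s}$, and substitute into Theorem~\ref{thm:main:I}. The paper carries out exactly this computation with the explicit reverse H\"older exponents $\rho_u=1+\tfrac{1}{2^{2s+n+1}[u]_{A_s}}$ and $\rho_v=1+\tfrac{1}{2^{2s'+n+1}[u]_{A_s}^{s'-1}}$, which yields $\eta'=1+2^{2\max\{s,s'\}+n+1}[u]_{A_s}^{\max\{1,1/(s-1)\}}$ and hence the precise constant you flagged as the only remaining bookkeeping.
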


Before proving this result, we need the following auxiliary lemma due to P\'erez \cite{Perez-course}.

\begin{lemma}\label{RH-sharp}
Let $w\in A_p$, $1<p<\infty$, and let $\rho_w=1+\frac1{2^{2\,p+n+1}\,[w]_{A_p}}$. Then,  for any cube $Q\subset \mathbb R^n$, we have
\begin{equation}
\left( \fint_Q w^{\rho_w} \,dx \right)^{\frac1{\rho_w}}
\le
2\,\fint_Q w \,dx.
\end{equation}
\end{lemma}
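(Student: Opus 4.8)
The plan is to run, for a fixed cube $Q$, the Calderón--Zygmund decomposition of $w$ at every height $t\ge \fint_Q w$ and then to feed the resulting information about the level sets into the layer-cake identity for $\int_Q w^{\rho_w}$. The $A_p$ hypothesis enters only through its elementary ``geometric'' consequence, namely that $w$-mass cannot concentrate on a subset of small relative Lebesgue measure. A purely constant-free argument would give $w(\Omega_{2^{n+1}t})\le 2^n t\,|\Omega_{2^{n+1}t}|\le\tfrac12\,w(\Omega_t)$, which is not enough (indeed reverse Hölder is false without extra structure on $w$); the point is that $A_p$ upgrades this into a genuine \emph{geometric decay} $w(\Omega_{2^{n+1}t})\le\theta\,w(\Omega_t)$ with a constant $\theta<1$ quantified by $[w]_{A_p}$. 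Since all quantities below are nonnegative, the identities are valid in $[0,\infty]$, so no a priori truncation of $w$ is needed: the final estimate itself certifies that $\int_Q w^{\rho_w}<\infty$.

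First I would record the geometric inequality: for any cube $Q'$ and any measurable $E\subseteq Q'$, writing $1=w^{1/p}w^{-1/p}$ and using Hölder together with the $A_p$ condition gives $|E|\le[w]_{A_p}^{1/p}\,|Q'|\,(w(E)/w(Q'))^{1/p}$, and applying this to $Q'\setminus E$ yields $w(E)/w(Q')\le 1-[w]_{A_p}^{-1}(|Q'\setminus E|/|Q'|)^{p}$; in particular $|E|\le\tfrac12|Q'|$ forces $w(E)\le\theta\,w(Q')$ with $\theta:=1-(2^{p}[w]_{A_p})^{-1}\in(1/2,1)$. Next I would fix $Q$, set $M:=\fint_Q w$, and for $t\ge M$ let $\Omega_t\subseteq Q$ be the union of the maximal dyadic subcubes $\{Q_j^t\}$ of $Q$ with $w$-average exceeding $t$; then $t<\fint_{Q_j^t}w\le 2^n t$ and $w\le t$ a.e.\ on $Q\setminus\Omega_t$, so $\{w>t\}\cap Q\subseteq\Omega_t$. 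Inside a fixed $Q_j^t$ the set $E_j:=\Omega_{2^{n+1}t}\cap Q_j^t$ is a disjoint union of dyadic subcubes of $w$-average $>2^{n+1}t$, whence $2^{n+1}t\,|E_j|<w(E_j)\le w(Q_j^t)\le 2^n t\,|Q_j^t|$, so $|E_j|\le\tfrac12|Q_j^t|$; the geometric inequality then gives $w(E_j)\le\theta\,w(Q_j^t)$, and summing over $j$ yields $w(\Omega_{2^{n+1}t})\le\theta\,w(\Omega_t)$ for all $t\ge M$.

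Iterating this from $t=M$ and using that $t\mapsto w(\Omega_t)$ is non-increasing, I would obtain $w(\Omega_t)\le\theta^{-1}w(Q)\,(M/t)^{\gamma}$ for all $t\ge M$, where $\gamma:=-\log\theta/((n+1)\log 2)>0$. Then, putting $\epsilon:=\rho_w-1=(2^{2p+n+1}[w]_{A_p})^{-1}$ and using $\{w>t\}\cap Q\subseteq\Omega_t$ in the layer-cake formula,
\[
\int_Q w^{\rho_w}\,dx=\epsilon\int_0^\infty t^{\epsilon-1}\,w(\{w>t\}\cap Q)\,dt\le M^\epsilon w(Q)+\epsilon\,\theta^{-1}w(Q)\,M^{\gamma}\int_M^\infty t^{\epsilon-1-\gamma}\,dt .
\]
From $-\log\theta\ge (2^{p}[w]_{A_p})^{-1}$ one has $\gamma\ge((n+1)\log 2\cdot 2^{p}[w]_{A_p})^{-1}$, and since $3(n+1)\log 2<3(n+1)<4(n+1)\le 4\cdot2^n<2^{p+n+1}$ for $p>1$, $n\ge1$, it follows that $2^{2p+n+1}>3(n+1)\log 2\cdot 2^{p}$ and hence $\epsilon<\gamma/3$. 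Therefore $\gamma-\epsilon>2\epsilon$, the last integral equals $M^{\epsilon-\gamma}/(\gamma-\epsilon)$, and the right-hand side is $\le(1+\tfrac{\epsilon}{\gamma-\epsilon}\theta^{-1})M^\epsilon w(Q)<2\,M^\epsilon w(Q)=2\big(\fint_Q w\big)^{\rho_w}|Q|$ (using $\epsilon/(\gamma-\epsilon)<\tfrac12$ and $\theta^{-1}<2$). Dividing by $|Q|$ gives $(\fint_Q w^{\rho_w})^{1/\rho_w}<2^{1/\rho_w}\fint_Q w\le 2\fint_Q w$, which is the claim.

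The main obstacle is purely quantitative bookkeeping: one must carry explicit constants through the doubling factor $2^n$ in the selection step, the exponent $p$ in the geometric $A_p$-inequality, and the passage from geometric decay to power decay, so that the decay rate $\gamma$ comes out proportional to $(2^{p}[w]_{A_p})^{-1}$ and, crucially, $\rho_w-1=(2^{2p+n+1}[w]_{A_p})^{-1}$ still fits comfortably below $\gamma$ with the absolute constant $2$ to spare. (If one is content with \emph{some} reverse Hölder exponent $1+c_n(p)[w]_{A_p}^{-1}$ the argument is identical and one then descends to $\rho_w$ by Jensen's inequality; making $c_n(p)$ explicit and compatible with the stated value is the delicate point.)
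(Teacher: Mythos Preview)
Your argument is correct. The Calder\'on--Zygmund selection, the $A_p$ ``no-concentration'' inequality yielding the decay constant $\theta=1-(2^p[w]_{A_p})^{-1}$, the iteration to power decay with exponent $\gamma=-\log\theta/((n+1)\log 2)$, and the layer-cake computation all check out, as does the numerical verification that $\epsilon=\rho_w-1$ satisfies $\epsilon<\gamma/3$ so that $1+\frac{\epsilon}{\gamma-\epsilon}\theta^{-1}<2$.

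As for comparison with the paper: the paper does not give a proof of this lemma at all. It is stated as an auxiliary result due to P\'erez and simply cited from \cite{Perez-course}; the next proof environment in the paper is for Theorem~\ref{thm:main:II}, which \emph{uses} the lemma. Your argument is in fact the standard proof of the sharp reverse H\"older inequality (essentially the one found in the cited reference), so there is no alternative approach to contrast.
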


\begin{proof}[Proof of Theorem \ref{thm:main:II}]
Fix $u=w^\theta\in A_s$. Write $v=u^{1-s'}=w^{\theta\,(1-s')}\in A_{s'}$. Apply Lemma \ref{RH-sharp} to $u$ and $v$ to obtain that
\begin{equation}\label{eqn:u-v}
\left( \fint_Q u^{\rho_u} \,dx \right)^{\frac1{\rho_u}}
\le
2\,\fint_Q u\,dx,
\qquad\quad
\left( \fint_Q v^{\rho_v} \,dx \right)^{\frac1{\rho_v}}
\le
2\,\fint_Q v \,dx,
\end{equation}
where
$$
\rho_u
=
1+\frac1{2^{2\,s+n+1}\,[u]_{A_s}}
,
\qquad\quad
\rho_v
=
1+\frac1{2^{2\,s'+n+1}\,[v]_{A_{s'}}}
=
1+\frac1{2^{2\,s'+n+1}\,[u]_{A_{s}}^{s'-1}}.
$$
Set $\eta=\min\{\rho_u, \rho_v\}$. We see that $u^\eta \in A_s$ by \eqref{eqn:u-v} and our choice of $\eta$:
\begin{multline*}
\big[u^\eta\big]_{A_s}^{\frac1\eta}
=
\sup_Q
\left( \fint_Q u^\eta\,dx \right)^{\frac1\eta} \left( \fint_Q (u^\eta)^{1- s'} \,dx \right)^{\frac{s-1}{\eta}}
\\
\le
\sup_Q
\left( \fint_Q u^{\rho_u}\,dx \right)^\frac{1}{\rho_u} \left( \fint_Q v^{\rho_v} \,dx \right)^{\frac{s-1}{\rho_v}}
\\
\le\sup_Q
2^{s}\left( \fint_Q u\,dx \right)\left( \fint_Q v\,dx \right)^{(s-1)}
=
2^{s}\,[u]_{A_s}.
\end{multline*}
We now estimate $\eta'$. We first observe that
$$
\eta'
=
\max\{\rho_u',\rho_v'\}
=
1+\max\big\{2^{2\,s+n+1}\,[u]_{A_s}, 2^{2\,s'+n+1}\,[u]_{A_{s}}^{s'-1}
\big\}.
$$
If $1<s\le 2$ we have that $2^{2\,s+n+1}\,[u]_{A_s}\le 2^{2\,s'+n+1}\,[u]_{A_{s}}^{s'-1}$ since $1\le s'-1$ and $[u]_{A_s}\ge 1$. On the other hand, if
$2\le s<\infty$ we analogously observe that $2^{2\,s'+n+1}\,[u]_{A_{s}}^{s'-1}\le 2^{2\,s+n+1}\,[u]_{A_s}$. Thus,
$$
\eta'
=
1+2^{2\,\max\{s,s'\}+n+1}\,[u]_{A_s}^{\max\{1,s'-1\}}
=
1+2^{2\,\max\{s,s'\}+n+1}\,[u]_{A_s}^{\max\{1,\frac1{s-1}\}}.
$$
By  \eqref{eqn:main:k=1} in Theorem \ref{thm:main:I}, the previous estimates, and the fact that $\eta'\ge 1$ we obtain
\begin{align*}
\big\|[T,b]f \big\|_{L^q (w^q)}
 & \leq
\frac{\eta'\,\theta}{\min \left\{ 1, s-1 \right\}}\,
\phi
\left( 4^{ \frac{\min\{1,s-1\}}{\eta'}}\, \big[w^{\theta\,\eta}\big]_{A_s}^{1/\eta} \right)\,
\|b\|_{ \BMO}\, \|f\|_{L^p (w^p)}
\\
 & \le
\frac{\theta}{\min \left\{ 1, s-1 \right\}}\,\big(1+2^{2\,\max\{s,s'\}+n+1}\,[u]_{A_s}^{\max\{1,\frac1{s-1}\}}\big)\,\\
& \hspace{10mm}
\phi
\left( 4^{ \min\{1,s-1\}}\,2^s\, \big[w^{\theta}\big]_{A_s}\right)\,
\|b\|_{ \BMO}\, \|f\|_{L^p (w^p)}
\\
 & \le
\frac{2^{2\,\max\{s,s'\}+n+2}\,\theta}{\min \left\{ 1, s-1 \right\}}\,
\,[u]_{A_s}^{\max\{1,\frac1{s-1}\}}\,\\
& \hspace{10mm} \phi
\left( 4^{ \min\{1,s-1\}}\,2^s\, \big[w^{\theta}\big]_{A_s}\right)\,
\|b\|_{ \BMO}\, \|f\|_{L^p (w^p)}.
\end{align*}
The proof of \eqref{eqn:main:k:II} is analogous and we leave it to the interested reader.
\end{proof}

\subsection{Commutators vs weighted norm inequalities}
In Theorem \ref{thm:main:I} we proved that if a given linear operators $T$ satisfies weighted norm inequalities, then so do the corresponding commutators with $\BMO$ functions. In many situations, the theories of weighted norm inequalities and the estimates for the commutators run parallel without apparent  connection. Theorem \ref{thm:main:I} gives a very powerful one-way bridge between the two theories. It is therefore natural to wonder whether there is in fact a two-way bridge between the two theories, that is, also an abstract principle that allows one to say that estimates for the commutators imply certain norm inequalities.

Suppose that a given operator $T$ has the property that every commutator $T_b^k$ is bounded on $L^2(\mathbb{R}^n)$ with an appropriate control on the constants. We wonder whether this implies that $T$ is bounded on $L^2(w)$ for some collection of weights. The best that one could expect the class of weights to be is $A_2$ (or some $A_p$), since we know that for the Riesz or Hilbert transforms the condition $A_2$ is necessary and sufficient. However, this class is too big. In fact, there are examples of operators whose commutators behave as the ones of the Riesz or Hilbert transforms for which the class of weights cannot be $A_2$. For example, consider the Riesz transform $\nabla L^{-1/2}$ where $Lf=-\text{div}(A\nabla f)$ is a second order divergence form elliptic operator with bounded and complex matrix $A$. All commutators of $L$ are bounded on $L^2(\mathbb{R}^n)$ with the appropriate constants; see \cite{AM3} and Section \ref{applications} below. However, if $\nabla L^{-1/2}$ were bounded on $L^2(w)$ for every $w\in A_2$, then the Rubio de Francia extrapolation theorem would imply that  $\nabla L^{-1/2}$ is bounded  on $L^p(\mathbb{R}^n)$ for every $1<p<\infty$ and this is not true for all such operators $L$; see \cite{Aus}. This shows that the class of weights considered for such a converse should be small enough  so that extrapolation does not lead to invalid estimates.

\begin{theorem}\label{thm:Comm-vs-Weights}
Fix $1 < p < \infty$ and suppose that $T$ is a linear operator.

\begin{list}{$(\theenumi)$}{\usecounter{enumi}\leftmargin=1cm \labelwidth=1cm \itemsep=0.2cm \topsep=0cm \renewcommand{\theenumi}{\alph{enumi}}}
\item Suppose there is a $\lambda_0>0$ and $\phi: [0, \lambda_0] \longrightarrow [0, \infty)$ an increasing function such that for each $b \in  \BMO$ we have
\begin{equation}\label{thm:Comm-vs-Weights:a-Hyp}
\|Tf\|_{L^p (e^{\lambda\, b})} \leq \phi\big(|\lambda|\,\|b\|_{\BMO}\big)\, \|f\|_{L^p (e^{\lambda b})},
\qquad \forall\,\lambda,\ |\lambda|\le \frac{\lambda_0}{\|b\|_{\BMO}}.
\end{equation}
Then, for each $k \geq 0$ and $b\in  \BMO$, one has
$$\|T_b^k f\|_{L^p (\mathbb{R}^n)}
\leq
\phi(\lambda_0)\, k!\, \left( \frac{p}{\lambda_0} \right)^k \,\|b\|_{\BMO}^k\, \|f\|_{L^p (\mathbb{R}^n)}.
$$

\item Conversely, suppose that there exists $\lambda_0, C_0>0$ such that, for every $b\in \BMO$ and for every $k\ge 0$, one has
\begin{equation}\label{thm:Comm-vs-Weights:b-Hyp}
\|T_b^k f\|_{L^p(\mathbb{R}^n)} \leq C_0\,k! \left( \frac{p}{\lambda_0} \right)^k \,\|b\|_{ \BMO}^k\, \|f\|_{L^p(\mathbb{R}^n)}.
\end{equation}
Then, there exists an increasing function  $\phi: [0, \lambda_0) \longrightarrow [0, \infty)$ such that for every $b\in \BMO$ we have
\begin{equation}\label{thm:Comm-vs-Weights:b-Con}
\|Tf\|_{L^p (e^{\lambda b})}
\leq
\phi\big(|\lambda|\,\|b\|_{\BMO}\big)\,\|f\|_{L^p (e^{\lambda b})},
\qquad \forall\,\lambda,\ |\lambda|< \frac{\lambda_0}{\|b\|_{\BMO}}.
\end{equation}
\end{list}
\end{theorem}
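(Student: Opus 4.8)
The plan is to exploit the same analytic-family-of-operators machinery used in the proof of Theorem \ref{thm:main:I}, but now running the Cauchy integral argument in the opposite direction. Fix $b\in\BMO$; by homogeneity we may assume $\|b\|_{\BMO}=1$, so that the hypotheses concern the range $|\lambda|\le\lambda_0$ (resp. $|\lambda|<\lambda_0$). As before set $\Psi(z)f(x)=e^{-zb(x)}T(e^{zb}f)(x)$, which is (at least formally) analytic in $z$, with $\Psi^{(k)}(0)f=T_b^k f$. The Taylor coefficients of $\Psi$ at the origin are exactly the commutators divided by $k!$.

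For part $(a)$, the key observation is that for real $z=\lambda$ with $|\lambda|\le\lambda_0$ one has, directly from the definition of $\Psi$ and a change of the underlying weight,
\[
\|\Psi(\lambda)f\|_{L^p(\mathbb R^n)}
=
\big\|T(e^{\lambda b}f)\big\|_{L^p(e^{-\lambda b\,p})}
\le
\phi\big(|\lambda|\big)\,\big\|e^{\lambda b}f\big\|_{L^p(e^{-\lambda b\,p})}
=
\phi\big(|\lambda|\big)\,\|f\|_{L^p(\mathbb R^n)},
\]
where the middle inequality is \eqref{thm:Comm-vs-Weights:a-Hyp} applied with the weight $e^{-\lambda b\,p}$ in place of $e^{\lambda b}$ (note $e^{-\lambda b\,p}=e^{\mu b}$ with $\mu=-\lambda p$, and $|\mu|=|\lambda|p$; one should take $\lambda_0$ in the hypothesis adjusted by the factor $p$, or simply track this constant — I would state the hypothesis so that this is clean). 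For complex $z$ on the circle $|z|=\delta$ with $\delta<\lambda_0/p$, one needs the same estimate with $\Re z$ in place of $\lambda$, which works verbatim since only the real part of the exponent enters the modulus of the weight $|e^{-zb}|=e^{-\Re(z)b}$. Then Cauchy's formula $T_b^k f = \tfrac{k!}{2\pi i}\int_{|z|=\delta}\Psi(z)f\,z^{-k-1}\,dz$ together with Minkowski's integral inequality gives $\|T_b^kf\|_{L^p}\le k!\,\delta^{-k}\,\phi(\delta p)\,\|f\|_{L^p}$; letting $\delta\uparrow\lambda_0/p$ (using monotonicity of $\phi$ and that $\phi$ is defined up to $\lambda_0$) yields the stated bound with the constant $(p/\lambda_0)^k$.

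For part $(b)$, the direction is reversed: we are handed the bounds on all the Taylor coefficients $\|T_b^k f\|_{L^p}\le C_0\,k!\,(p/\lambda_0)^k\,\|f\|_{L^p}$ and we want to sum the Taylor series of $\Psi(\lambda)f = \sum_{k\ge 0}\frac{\lambda^k}{k!}T_b^k f$ and bound it in $L^p$. Taking $L^p$ norms termwise (triangle inequality) gives, for $|\lambda|<\lambda_0/p$,
\[
\|\Psi(\lambda)f\|_{L^p(\mathbb R^n)}
\le
\sum_{k\ge 0}\frac{|\lambda|^k}{k!}\,\|T_b^k f\|_{L^p(\mathbb R^n)}
\le
C_0\sum_{k\ge 0}\Big(\frac{|\lambda|p}{\lambda_0}\Big)^k\|f\|_{L^p(\mathbb R^n)}
=
\frac{C_0}{1-|\lambda|p/\lambda_0}\,\|f\|_{L^p(\mathbb R^n)},
\]
a convergent geometric series precisely because $|\lambda|<\lambda_0/p$. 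It remains only to translate $\|\Psi(\lambda)f\|_{L^p(\mathbb R^n)}$ back into a weighted estimate for $T$: writing $g=e^{\lambda b}f$, the left side is $\|e^{-\lambda b}Tg\|_{L^p}=\|Tg\|_{L^p(e^{-\lambda b\,p})}$ and the right side is $C_0(1-|\lambda|p/\lambda_0)^{-1}\|e^{-\lambda b}g\|_{L^p}=C_0(1-|\lambda|p/\lambda_0)^{-1}\|g\|_{L^p(e^{-\lambda b\,p})}$, which is \eqref{thm:Comm-vs-Weights:b-Con} (up to renaming $-\lambda p\mapsto\lambda$ and adjusting $\lambda_0$) with $\phi(t)=C_0(1-t/\lambda_0)^{-1}$, an increasing function on $[0,\lambda_0)$.

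The main obstacle — and the point requiring genuine care rather than bookkeeping — is justifying the analyticity of $z\mapsto\Psi(z)f$ as an $L^p$-valued map and, in part $(b)$, the interchange of the infinite sum with the $L^p$ norm. The termwise triangle inequality is harmless, but to conclude that $\sum \frac{\lambda^k}{k!}T_b^k f$ actually represents $\Psi(\lambda)f$ (rather than just some function with the right norm bound) one must know the Taylor expansion converges to $\Psi(\lambda)f$ in $L^p$; this is exactly the kind of rigorous justification of the Cauchy integral trick for which the paper already cites \cite{ABKP}, and I would invoke that reference here as well. A secondary nuisance is the consistent tracking of the factor $p$ coming from raising the weight to the power $p$ inside $L^p(w^p)$ versus $L^p(w)$; this only shifts the admissible radius from $\lambda_0$ to $\lambda_0/p$ and is absorbed into the statement's normalization, but it must be done uniformly in both parts so that $(a)$ and $(b)$ are genuine converses of each other.
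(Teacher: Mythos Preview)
Your proposal is correct and follows essentially the same approach as the paper: the Cauchy integral formula on a circle of radius $\lambda_0/p$ for part~(a), and termwise summation of the Taylor series yielding $\phi(t)=C_0(1-t/\lambda_0)^{-1}$ for part~(b). The paper handles the factor-of-$p$ bookkeeping you flag more cleanly---in~(a) it simply sets $\delta=\lambda_0/p$ from the start (no limit, since $\phi$ is defined on the closed interval), and in~(b) it substitutes $z=-\lambda/p$ and $f=g\,e^{\lambda b/p}$ so that $\|\Psi(-\lambda/p)f\|_{L^p}=\|Tg\|_{L^p(e^{\lambda b})}$ lands directly on the target weight without any renaming.
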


\begin{proof}
We shall first prove $(a)$. The case $k=0$ follows from \eqref{thm:Comm-vs-Weights:a-Hyp} with $\lambda=0$. Let $k\ge 1$. By homogeneity we may assume that $\|b\|_{\BMO} = 1$. We set $\delta=\lambda_0/p$. Write, as before $\Psi(z)f(x): = e^{-z\,b(x)} T (e^{z\,b}\,f)(x)$. The Cauchy Integral Formula gives
$$
T_b^k f(x)
=
\Psi^{(k)}(0)f(x)
= \frac{k!}{2\pi i} \int_{|z|=\delta} \frac{\Psi(z)f(x)}{z^{k+1}}\, dz.
$$
For every $|z|=\delta$ we note that $p\,|\Re(z)|\le p\,\delta\le \lambda_0$. Thus we can apply \eqref{thm:Comm-vs-Weights:a-Hyp} with $e^{-p\,\Re(z)\,b}$ and write
\begin{multline*}
\|T_b^k f\|_{L^p(\mathbb{R}^n)}
\leq \frac{k!}{2\,\pi\,\delta^{k+1}} \int_{|z| = \delta} \|\Psi (z) f\|_{L^p(\mathbb{R}^n)} |dz|
\le
\frac{k!}{\delta^{k}} \sup_{|z| = \delta} \|T(e^{z\,b}\,f)\|_{L^p(e^{-p\,\Re(z)\,b})}
\\
\le
\frac{k!}{\delta^k}\,\|f\|_{L^p (\mathbb{R}^n)}\,\sup_{|z| = \delta} \phi \big( p\,|\Re(z)|\big)
\le
\frac{k!}{\delta^k}\,\phi(\lambda_0),
\end{multline*}
which is the desired estimate.

Let us know turn to proving $(b)$. By homogeneity we may assume again that $\|b\|_{ \BMO} = 1$. Define $\Psi(z)$ as before and and fix $x$. Since $\Psi(z)f(x)$ is holomorphic, we can write
$$
|\Psi(z)f(x)|
=
\left|
\sum_{k=0}^\infty \frac{\Psi^{(k)}(0)f(x)}{k!}\,z^k
\right|
\le
\sum_{k=0}^\infty \frac{|T_b^k f(x)|}{k!}\,|z|^k.
$$
We now use \eqref{thm:Comm-vs-Weights:b-Hyp} to obtain that if $|z|<\lambda_0/p$ then
\begin{align*}
\|\Psi(z)f\|_{L^p(\mathbb{R}^n)}
 & \le
\sum_{k=0}^\infty \frac{1}{k!}\,|z|^k\,\|T_b^k f\|_{L^p(\mathbb{R}^n)}
\le
C_0\,\|f\|_{L^p(\mathbb{R}^n)}\,
\sum_{k=0}^\infty |z|^k\, \left( \frac{p}{\lambda_0} \right)^k  =
\phi(|z|\,p)\,
\|f\|_{L^p(\mathbb{R}^n)}\,
\end{align*}
where $\phi: [0, \lambda_0) \longrightarrow [0, \infty)$ is the increasing function defined by the formula $\phi(s)=C_0\,(1-s/\lambda_0)^{-1}$, $0\le s<\lambda_0$.
Let us now take $z=-\lambda/p$ with $|\lambda|<\lambda_0$ (which implies that $|z|<\lambda_0/p$) and $f(x)=g(x)\,e^{\lambda\,b(x)/p}$. The previous estimate yields
$$
\|Tg\|_{L^p(e^{\lambda\,b})}
=
\|\Psi(-\lambda/p)f\|_{L^p(\mathbb{R}^n)}
\le
\phi(|\lambda|)\,
\|f\|_{L^p(\mathbb{R}^n)}\,
=
\phi(|\lambda|)\,\|g\|_{L^p(e^{\lambda\,b})}.
$$
This completes the proof.
\end{proof}

\section{Multilinear operators}\label{multilinear}
We turn now our attention to multilinear operators with the main goal of obtaining the natural counterparts to the general results from the linear setting, Theorem \ref{thm:main:I} and Theorem \ref{thm:main:II}. The situation here changes slightly since the good classes of weights come in two flavors: product weights and vector weights.

Let us thus begin by defining the main objects that we will be dealing with in this setting. $T$ will always denote an $m$-linear operator from $X_1 \times \cdots \times X_m$ into $Y$, where $X_j, 1\le j\le m$, and $Y$ are some normed spaces. In our statements the $X_j$ and $Y$ will be again appropriate weighted Lebesgue spaces. For $\textbf{f}=(f_1, f_2,\dots, f_m)\in
X_1\times X_2\times\cdots \times X_m$ and for a measurable vector $\textbf{b}=(b_1, b_2, \dots, b_m)$, and $1\leq j\leq m$, we define, whenever it makes sense, the (first order) commutators
$$[T, \textbf{b}]_{e_j}({\textbf f})=b_jT(f_1,\dots, f_j,\dots f_m)-T(f_1,\dots, b_jf_j,\dots f_m);$$
we denoted by $e_j$ the basis element taking the value $1$ at component $j$ and $0$ in every other component, therefore expressing the fact that the commutator acts as a linear one in the $j$th variable and leaving the rest of the entries of $\textbf{f}$ untouched. Then, if $k\in\mathbb N$, we define
$$[T, \textbf{b}]_{k e_j} = [ \cdots [ [ T, \textbf{b}]_{e_j}, \textbf{b}]_{e_j} \cdots, \textbf{b}]_{e_j},$$ where the commutator is performed $k$ times. Finally, if
$\alpha = (\alpha_1, \alpha_2,\dots, \alpha_m) \in \mathbb (\mathbb N\cup \{0\})^m$ is a multi-index, we define
\begin{align}
[T, \textbf{b}]_\alpha = [ \cdots [ [T, \textbf{b}]_{\alpha_1 e_1}, \textbf{b}]_{\alpha_2 e_2} \cdots, \textbf{b}]_{\alpha_m e_m}.
\end{align}
Informally, if the multilinear operator $T$ has a kernel representation of the form
$$T(\textbf{f})(x)=\int_{\mathbb R^{nm}}K(x, y_1,\dots, y_m)f_1(y_1)\cdots f_m(y_m)dy_1\dots dy_m,$$
then $[T, \textbf{b}]_\alpha(\textbf{f})(x)$ can be expressed in a similar way, with kernel
$$\prod_{j=1}^m (b_j(x)-b_j(y_j))^{\alpha_j}K(x, y_1,\dots, y_m).$$
Next, we define the appropriate vector-weights classes of Lerner, Ombrosi, P\'erez, Torres and Truji\-llo-Gonz\'alez \cite{Lerner2009}.
\begin{definition}\label{vector-class}
Let $\textbf{P} = (p_1, \dots, p_m)$ with $1 < p_1, \dots, p_m < \infty$ satisfying
$$\frac{1}{p_1} + \cdots + \frac{1}{p_m} = \frac{1}{p}.$$
Let $\textbf{w} = (w_1, \dots, w_m)$ and set
$$\nu_{\textbf{w}} = \prod_{j=1}^m w_j^{p/p_j}.$$
We say that the vector weight $\textbf{w} \in A_{\textbf{P}}$ if
$$[\textbf{w}]_{A_{\textbf{P}}} := \sup_Q \left( \fint_Q \nu_w \,dx \right)^{1/p} \prod_{j=1}^m \left( \fint_Q w_j^{1-p_j'} \,dx \right)^{1/p_j'} < \infty.$$
\end{definition}
We observe that, by H\"older's inequality, we have
$$\left( \fint_Q \nu_{\textbf{w}} \,dx \right)^{1/p} \prod_{j=1}^m \left( \fint_Q w_j^{1-p_j'} \,dx \right)^{1/p_j'}  \!\! \leq \prod_{j=1}^m \left( \fint_Q w_j \,dx \right)^{1/p_j}  \!\! \left( \fint_Q w_j^{1-p_j'} \,dx \right)^{1/p_j'}\!.$$
and thus
$$\prod_{j=1}^m A_{p_j} \subset A_{\textbf{P}}.$$
It was shown in \cite{Lerner2009} that the above inclusion is strict.

\subsection{Multilinear commutators I} We are now able to give a multilinear version of Theorem \ref{thm:main:I} where each weight belongs, individually, to a Muckenhoupt class.

\begin{theorem}\label{thm:multilinear:main:I}
Let $T$ be an $m$-linear operator. Fix $1 < p_j< \infty$, $1 <s_j< \infty$, and $\theta_j > 0$, $1\le j\le m$. Let $1<p<\infty$ be so that
$$\frac{1}{p}=\sum_{j=1}^m \frac{1}{p_j},$$
and suppose that there exist increasing functions $\phi_j: [1, \infty)\to [0, \infty)$ such that for all $\textbf{w} = (w_1, \dots, w_m)$ satisfying $w_j^{\theta_j}\in A_{s_j}, 1\leq j\leq m$, we have
\begin{equation}
\label{product-weight}
\|T\textbf{f}\|_{L^p (\prod_{j=1}^m w_j^p)} \leq \prod_{j=1}^m \phi_j\left([w_j^{\theta_j}]_{A_{s_j}}\right)\|f_j\|_{L^{p_j}\left(w_j^{p_j}\right)}.
\end{equation}
Then, for all $\textbf{b} = (b_1, \dots, b_m) \in  \BMO^m$, for each multi-index $\alpha$ and vector weight $\textbf{w} = (w_1, \dots, w_m)$ such that for all $1\le j\le m$, $w_j^{\eta_j \theta_j} \in A_{s_j}$ with some $1<\eta_j<\infty$, we have
\begin{equation}
\label{multi-commutator-I}
\|[T, \textbf{b}]_\alpha \textbf{f}\|_{L^p (\prod_{j=1}^m w_j^p)}
\leq
\alpha!\prod_{j=1}^m \frac1{\delta_j^{\alpha_j}}\phi_j
\left(4^{\theta_j\delta_j}[w_j^{\theta_j\eta}]^{1/\eta}_{A_{s_j}}\right)\|b_j\|^{\alpha_j}_{\BMO} \|f_j\|_{L^{p_j}\left(w_j^{p_j}\right)},
\end{equation}	
where $\delta_j=\frac{\min\{1, s_j-1\}}{\eta_j'\theta_j}$, $1\le j\le m$.
\end{theorem}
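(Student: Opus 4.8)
\textbf{Proof proposal for Theorem \ref{thm:multilinear:main:I}.}
The plan is to mimic the Cauchy integral argument in the proof of Theorem \ref{thm:main:I}, but now applied simultaneously in each of the $m$ slots. First I would fix $\textbf{b}=(b_1,\dots,b_m)\in\BMO^m$ and, by homogeneity, normalize $\|b_j\|_{\BMO}=1$ for all $j$; fix a multi-index $\alpha$ and a vector weight $\textbf{w}=(w_1,\dots,w_m)$ with $w_j^{\eta_j\theta_j}\in A_{s_j}$. Set $\delta_j=\frac{\min\{1,s_j-1\}}{\eta_j'\theta_j}$ as in the statement and, for $\textbf{z}=(z_1,\dots,z_m)\in\C^m$ with $|z_j|\le\delta_j$, introduce the analytic family
\[
\Psi(\textbf{z})\textbf{f}(x):=e^{-(z_1 b_1(x)+\cdots+z_m b_m(x))}\,T\big(e^{z_1 b_1}f_1,\dots,e^{z_m b_m}f_m\big)(x).
\]
The key observation is that $\Psi(\textbf{z})\textbf{f}(x)$ is holomorphic in each variable $z_j$ separately (it is, slot by slot, exactly the conjugation used before), and that differentiating $\alpha_j$ times in $z_j$ and then the other variables and evaluating at $\textbf{z}=0$ produces precisely $[T,\textbf{b}]_\alpha\textbf{f}(x)$ — this is the multilinear analogue of $T_b^k f=\Psi^{(k)}(0)f$, and follows by the same Leibniz-rule computation that gives $\Psi'(z)f=-b\,e^{-zb}T(e^{zb}f)+e^{-zb}T(be^{zb}f)$ in each slot.

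The second step is the uniform estimate for $\Psi(\textbf{z})$. For fixed $\textbf{z}$ with $|z_j|\le\delta_j$, put $W_j:=w_j\,e^{-\Re(z_j)b_j}$. Exactly as in the linear proof, Lemma \ref{lemma:BMO->Ap:prod} applied with $r=s_j$, $\eta=\eta_j$ gives $W_j^{\theta_j}\in A_{s_j}$ with
\[
\big[W_j^{\theta_j}\big]_{A_{s_j}}\le \big[w_j^{\theta_j\eta_j}\big]_{A_{s_j}}^{1/\eta_j}\,4^{\theta_j|\Re(z_j)|}\le \big[w_j^{\theta_j\eta_j}\big]_{A_{s_j}}^{1/\eta_j}\,4^{\theta_j\delta_j},
\]
since $\theta_j|\Re(z_j)|\le\theta_j\delta_j=\frac{\min\{1,s_j-1\}}{\eta_j'}$. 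Feeding $(W_1,\dots,W_m)$ into the hypothesis \eqref{product-weight} and noting $\|e^{z_j b_j}f_j\|_{L^{p_j}(W_j^{p_j})}=\|f_j\|_{L^{p_j}(w_j^{p_j})}$, we get, since each $\phi_j$ is increasing,
\[
\|\Psi(\textbf{z})\textbf{f}\|_{L^p(\prod_j w_j^p)}\le \prod_{j=1}^m \phi_j\Big(4^{\theta_j\delta_j}\big[w_j^{\theta_j\eta_j}\big]_{A_{s_j}}^{1/\eta_j}\Big)\,\|f_j\|_{L^{p_j}(w_j^{p_j})}=:\mathcal{A},
\]
a bound uniform over the polydisc $\{|z_j|=\delta_j\}$.

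The third step is to extract the commutator by the iterated Cauchy integral formula in $m$ variables: with $\alpha!=\prod_j\alpha_j!$,
\[
[T,\textbf{b}]_\alpha\textbf{f}(x)=\frac{\alpha!}{(2\pi i)^m}\int_{|z_1|=\delta_1}\cdots\int_{|z_m|=\delta_m}\frac{\Psi(\textbf{z})\textbf{f}(x)}{z_1^{\alpha_1+1}\cdots z_m^{\alpha_m+1}}\,dz_m\cdots dz_1,
\]
which one justifies as in \cite{ABKP} by iterating the one-variable formula slot by slot (for slots with $\alpha_j=0$ the integral just reproduces $z_j\mapsto$ evaluation and no extra factor appears, consistent with $\frac{1}{\delta_j^{\alpha_j}}=1$). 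Then Minkowski's integral inequality in $L^p(\prod_j w_j^p)$, together with the uniform bound $\mathcal{A}$ and $\int_{|z_j|=\delta_j}|dz_j|=2\pi\delta_j$, yields
\[
\|[T,\textbf{b}]_\alpha\textbf{f}\|_{L^p(\prod_j w_j^p)}\le \alpha!\prod_{j=1}^m\frac{1}{\delta_j^{\alpha_j}}\,\mathcal{A},
\]
which is exactly \eqref{multi-commutator-I} (reading $\eta$ in the statement as $\eta_j$ in slot $j$). The main obstacle, as in the linear case, is the rigorous justification of the vector-valued Cauchy integral representation — i.e.\ that $\Psi(\textbf{z})\textbf{f}$ is genuinely an $L^p$-valued holomorphic function of several complex variables and that the iterated contour integral commutes with the $L^p$ norm; this is handled by appealing to \cite{ABKP} and iterating their one-variable argument, and for linearizable (here, $\mathbb{B}$-valued) components one works in the corresponding Bochner space $L^p_{\mathbb{B}}$ exactly as in the proof of Theorem \ref{thm:main:I}. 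Everything else is bookkeeping of the constants $\delta_j$ and repeated application of Lemma \ref{lemma:BMO->Ap:prod}.
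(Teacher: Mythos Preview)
Your proposal is correct and follows essentially the same route as the paper: normalize the $\BMO$ norms, introduce the same holomorphic family $\Psi(\textbf{z})\textbf{f}=e^{-\sum z_j b_j}T(e^{z_1 b_1}f_1,\dots,e^{z_m b_m}f_m)$, use Lemma~\ref{lemma:BMO->Ap:prod} slot by slot to control $[W_j^{\theta_j}]_{A_{s_j}}$, invoke hypothesis~\eqref{product-weight} for the perturbed weights, and finish with the Cauchy integral formula on the distinguished boundary of the polydisc together with Minkowski's inequality. The only cosmetic difference is notation ($W_j$ versus the paper's $v_j$) and your added remark on the linearizable case, which the paper does not discuss for this theorem.
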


\begin{proof}
We adapt the proof of Theorem \ref{thm:main:I} to the multilinear setting. Let then $\textbf{b} \in  \BMO^m$, and without loss of generality assume that they are real valued and $\|b_j\|_{\BMO} = 1, 1\le j\le m.$

Define, as before, for $z=(z_1,\ldots,z_n)$, the following holomorphic map
$$\Psi(z) (\textbf{f}) = e^{-\sum_{j=1}^m z_jb_j}T \left(e^{z_1 b_1} f_1, e^{z_2 b_2} f_2, \dots, e^{z_m b_m} f_m \right).$$
The Cauchy integral formula adapted to several complex variables now yields
\begin{equation}\label{multicauchy}
[T, \textbf{b}]_{\alpha}\textbf f
=
\left. D^\alpha\Psi_z(\textbf f)\right|_{z=0}
=
\frac{\alpha !}{(2\pi i)^m} \int_{b_0 P(0, \vec{\delta})} \frac{\Psi_z (\textbf{f})}{z_1^{\alpha_1+1}z_2^{\alpha_2 + 1}\cdots z_m^{\alpha_m+1}}\, dz_1dz_2\cdots dz_m,
\end{equation}
where we denoted by $b_0 P(0, \vec{\delta})=\{(z_1, z_2,\dots, z_m)\in \mathbb C^m: |z_j|=\delta_j, 1\le j\le m\}$ the so-called distinguished boundary of the polydisc
$P(0, \vec{\delta})=\{(z_1, z_2,\dots, z_m)\in \mathbb C^m: |z_j|<\delta_j, 1\le j\le m\}.$

Fix now the weights $w_j$ so that $w_j^{\theta_j\eta_j}\in A_{s_j}$ for $1\le j\le m$ and let us define $v_j=w_je^{-\text{Re}(z_j)b_j}$. Note that
$$v_j^{\theta_j}=w_j^{\theta_j} e^{-\text{Re}(z_j)\theta_jb_j}.$$
Since $w_j^{\theta_j\eta_j}\in A_{s_j}$ and
$$
|\text{Re}(z_j)\theta_j|\leq \delta_j\theta_j\le \frac{\min\{1, s_j-1\}}{\eta_j'},
$$
by Lemma \ref{lemma:BMO->Ap:prod} we obtain that $v_j^{\theta_j}\in A_{s_j}$ and
$$
[v_j^{\theta_j}]_{A_{s_j}}\le [w_j^{\theta_j\eta_j}]_{A_{s_j}}^{1/\eta_j}4^{\delta_j \theta_j}.
$$
By our hypothesis condition \eqref{product-weight}, we can write
\begin{align*}
\|\Psi_z \textbf{f}\|_{L^p (\prod_{j=1}^m w_j^p)}&=\|T(e^{z_1b_1}f_1,\dots , e^{z_m b_m}f_m)\|_{L^p (\prod_{j=1}^m v_j^p)}\\
&\leq \prod_{j=1}^m \phi_j\left([v_j^{\theta_j}]_{A_{s_j}}\right)\|e^{z_jb_j}f_j\|_{L^{p_j}\left(v_j^{p_j}\right)},
\end{align*}
which, by the increasing property of the functions $\phi_j$ and the fact that
$$
\|e^{z_jb_j}f_j\|_{L^{p_j}\left(v_j^{p_j}\right)}=\|f_j\|_{L^{p_j}\left(w_j^{p_j}\right)}
$$
further yields the uniform estimate over $b_0P(0, \vec\delta)$:
$$
\|\Psi_z \textbf{f}\|_{L^p (\prod_{j=1}^m w_j^p)}
\le \prod_{j=1}^m \phi_j
\left(4^{\theta_j\delta_j}[w_j^{\theta_j\eta_j}]^{1/\eta_j}_{A_{s_j}}\right) \|f_j\|_{L^{p_j}\left(w_j^{p_j}\right)}.
$$
Now, by Minkowski's inequality, we obtain that
\begin{align*}
\|[T, \textbf{b}]_{\alpha}\textbf f\|_{L^p (\prod_{j=1}^m w_j^p)}
&\leq
\frac{\alpha!}{(2\pi)^m}\int_{b_0 P(0, \vec{\delta})} \frac{\|\Psi_z \textbf{f}\|_{L^p (\prod_{j=1}^m w_j^p)}}{\prod_{j=1}^m |z_j|^{\alpha_j+1}} |dz_1|\cdots |dz_m|
\\
&\leq \frac{\alpha!}{(2\pi)^m}\prod_{j=1}^m \frac{2\pi\delta_j}{\delta_j^{\alpha_j+1}} \phi_j
\left(4^{\theta_j\delta_j}[w_j^{\theta_j\eta_j}]^{1/\eta}_{A_{s_j}}\right) \|f_j\|_{L^{p_j}\left(w_j^{p_j}\right)},
\end{align*}
which is \eqref{multi-commutator-I}.
\end{proof}

The statement of Theorem \ref{thm:multilinear:main:I} and its argument can now be adapted in a straightforward way to obtain a multilinear version of Theorem \ref{thm:main:II}. We leave this task to the interested reader. However, we would like to make several remarks on Theorem \ref{thm:multilinear:main:I}.

By re-normalizing the weights and writing $w_j:=w_j^{p_j}$, the statement of our theorem can be reformulated in terms of the weight $\nu_{\textbf{w}}$ introduced in Definition \ref{vector-class}. Ignoring the precise constants in terms of the increasing functions $\phi_j$ in Theorem \ref{thm:multilinear:main:I}, we obtain the following.

\begin{corollary}
\label{cor:multilinear:I}
Let $T$ be an $m$-linear operator. Fix $1 < p_j< \infty$, $1 <s_j< \infty$, and $\theta_j > 0$, $1\le j\le m$. Let $1<p<\infty$ be so that
$$\frac{1}{p}=\sum_{j=1}^m \frac{1}{p_j},$$
and suppose that for all $\textbf{w} = (w_1, \dots, w_m)$ satisfying $w_j^{\theta_j/p_j}\in A_{s_j}, 1\leq j\leq m$, we have
\begin{equation}
\label{product-weight:v1}
\|T\textbf{f}\|_{L^p (\nu_{\textbf{w}})} \lesssim \prod_{j=1}^m \|f_j\|_{L^{p_j}\left(w_j\right)}.
\end{equation}
Then, for all $\textbf{b} = (b_1, \dots, b_m) \in  \BMO^m$, for each multi-index $\alpha$ and vector weight $\textbf{w} = (w_1, \dots, w_m)$ such that for all $1\le j\le m$, $w_j^{\eta_j \theta_j/p_j} \in A_{s_j}$ with some $1<\eta_j<\infty$, we have
\begin{equation}
\label{multi-commutator:v1}
\|[T, \textbf{b}]_\alpha \textbf{f}\|_{L^p (\nu_{\textbf{w}})} \lesssim  \prod_{j=1}^m \|b_j\|^{\alpha_j}_{\BMO} \|f_j\|_{L^{p_j}\left(w_j\right)}.
\end{equation}
\end{corollary}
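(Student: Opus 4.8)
The plan is to deduce Corollary~\ref{cor:multilinear:I} from Theorem~\ref{thm:multilinear:main:I} by a change of variables in the weights together with H\"older's inequality, tracking only which classes the new weights land in and ignoring the explicit constants. First I would set up the dictionary between the two formulations. In Theorem~\ref{thm:multilinear:main:I} the hypothesis is stated for a vector $\textbf{v}=(v_1,\dots,v_m)$ with $v_j^{\theta_j}\in A_{s_j}$ and involves $L^{p_j}(v_j^{p_j})$ and $L^p(\prod_j v_j^p)$; given the target weights $w_j$ in the corollary, one writes $v_j:=w_j^{1/p_j}$, so that $v_j^{p_j}=w_j$ and $L^{p_j}(v_j^{p_j})=L^{p_j}(w_j)$, and $v_j^{\theta_j}=w_j^{\theta_j/p_j}$, which is exactly the class $A_{s_j}$ appearing in \eqref{product-weight:v1}. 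Thus the hypotheses match under this substitution, up to the one point that must be checked next.

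The key remaining step is to identify $\prod_{j=1}^m v_j^p=\prod_{j=1}^m w_j^{p/p_j}$ with $\nu_{\textbf{w}}$ from Definition~\ref{vector-class}: this is immediate, since $\nu_{\textbf{w}}=\prod_{j=1}^m w_j^{p/p_j}$ by definition. Hence \eqref{product-weight} with the weights $v_j=w_j^{1/p_j}$ becomes precisely \eqref{product-weight:v1} (with an implicit constant $\prod_j\phi_j([w_j^{\theta_j/p_j}]_{A_{s_j}})$, which we suppress), and conversely the conclusion \eqref{multi-commutator-I} with the same substitution becomes \eqref{multi-commutator:v1}, again after absorbing the factor $\alpha!\prod_j\delta_j^{-\alpha_j}\phi_j(4^{\theta_j\delta_j}[w_j^{\theta_j\eta_j}]_{A_{s_j}}^{1/\eta_j})$ into the implicit constant. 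The condition $w_j^{\eta_j\theta_j/p_j}\in A_{s_j}$ in the corollary corresponds under the substitution to $v_j^{\theta_j\eta_j}\in A_{s_j}$, which is the hypothesis needed to invoke the conclusion of Theorem~\ref{thm:multilinear:main:I}. So the argument is simply: apply Theorem~\ref{thm:multilinear:main:I} to $\textbf{v}=(w_1^{1/p_1},\dots,w_m^{1/p_m})$ with $\phi_j$ the increasing function supplied by the (qualitative) boundedness in \eqref{product-weight:v1}, and rewrite both sides.

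The only mild subtlety — and the step I would be most careful about — is making sure the quantified hypothesis of Theorem~\ref{thm:multilinear:main:I} (namely that \eqref{product-weight} holds \emph{with some} increasing $\phi_j$) is actually available from the purely qualitative assumption \eqref{product-weight:v1}. This is not automatic in general, but here one can take $\phi_j$ to be any increasing majorant of the operator norm of $T$ as a function of the relevant weight characteristics, or — cleanly — invoke the standard fact that a family of weighted estimates valid for \emph{all} $\textbf{w}$ with $v_j^{\theta_j}\in A_{s_j}$ automatically comes with a bound depending only on the characteristics $[v_j^{\theta_j}]_{A_{s_j}}$ (this is implicit in the way such weighted theorems are proved, via e.g. the structure of the proofs of \cite{Lerner2009}). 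Since the corollary only claims the qualitative estimate \eqref{multi-commutator:v1}, one does not even need to track the dependence, and it suffices to note that for fixed $\textbf{w}$ one has fixed finite characteristics and hence a finite constant. I would state this briefly and then declare the proof complete.

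\begin{proof}[Proof of Corollary~\ref{cor:multilinear:I}]
Given $\textbf{w}=(w_1,\dots,w_m)$ with $w_j^{\eta_j\theta_j/p_j}\in A_{s_j}$, set $v_j:=w_j^{1/p_j}$, so that $v_j^{p_j}=w_j$, $v_j^{\theta_j}=w_j^{\theta_j/p_j}$, and $v_j^{\theta_j\eta_j}=w_j^{\theta_j\eta_j/p_j}\in A_{s_j}$. Moreover $\prod_{j=1}^m v_j^{\,p}=\prod_{j=1}^m w_j^{\,p/p_j}=\nu_{\textbf{w}}$ by Definition~\ref{vector-class}. The assumption \eqref{product-weight:v1}, applied with an arbitrary such vector, says precisely that $T$ maps $\prod_j L^{p_j}(v_j^{p_j})$ into $L^p(\prod_j v_j^p)$ whenever $v_j^{\theta_j}\in A_{s_j}$; for each fixed configuration of characteristics this boundedness holds with some finite constant, and choosing $\phi_j:[1,\infty)\to[0,\infty)$ to be an increasing function dominating the resulting operator norms as a function of $[v_j^{\theta_j}]_{A_{s_j}}$, the hypothesis \eqref{product-weight} of Theorem~\ref{thm:multilinear:main:I} holds. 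Applying that theorem to the vector $\textbf{v}=(v_1,\dots,v_m)$, with the parameters $\delta_j=\frac{\min\{1,s_j-1\}}{\eta_j'\theta_j}$ as there, we obtain
\begin{equation*}
\|[T,\textbf{b}]_\alpha\textbf{f}\|_{L^p(\nu_{\textbf{w}})}
=\|[T,\textbf{b}]_\alpha\textbf{f}\|_{L^p(\prod_j v_j^p)}
\le \alpha!\prod_{j=1}^m\frac{1}{\delta_j^{\alpha_j}}\,\phi_j\!\left(4^{\theta_j\delta_j}[v_j^{\theta_j\eta_j}]_{A_{s_j}}^{1/\eta_j}\right)\|b_j\|_{\BMO}^{\alpha_j}\|f_j\|_{L^{p_j}(v_j^{p_j})}.
\end{equation*}
Since $v_j^{p_j}=w_j$ and $v_j^{\theta_j\eta_j}=w_j^{\theta_j\eta_j/p_j}$, and since for fixed $\textbf{w}$ the factor $\alpha!\prod_j\delta_j^{-\alpha_j}\phi_j(4^{\theta_j\delta_j}[w_j^{\theta_j\eta_j/p_j}]_{A_{s_j}}^{1/\eta_j})$ is a finite constant depending only on the admissible data, this is exactly \eqref{multi-commutator:v1}.
\end{proof}
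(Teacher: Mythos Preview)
Your proof is correct and follows exactly the paper's approach: the paper states the corollary as an immediate consequence of Theorem~\ref{thm:multilinear:main:I} obtained ``by re-normalizing the weights and writing $w_j:=w_j^{p_j}$'' and ``ignoring the precise constants,'' which is precisely your substitution $v_j=w_j^{1/p_j}$ together with the identification $\prod_j v_j^{\,p}=\nu_{\textbf{w}}$. You are in fact more careful than the paper in flagging the passage from the qualitative hypothesis \eqref{product-weight:v1} to the existence of increasing $\phi_j$ required by Theorem~\ref{thm:multilinear:main:I}.
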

The following is a variation on the corollary just stated, in which we express the conditions on the weights in a simpler manner.

\begin{corollary}
\label{cor:multilinear:II}
Let $T$ be an $m$-linear operator. Fix $1 < p_j< \infty$, $1\le j\le m$, and $\theta > 0$. Let $1<p<\infty$ be so that
$$\frac{1}{p}=\sum_{j=1}^m \frac{1}{p_j},$$
and suppose that for all $\textbf{w} = (w_1, \dots, w_m)$ satisfying $w_j^{\theta}\in A_{p_j}, 1\leq j\leq m$, we have
\begin{equation}
\label{product-weight:v2}
\|T\textbf{f}\|_{L^p (\nu_{\textbf{w}})} \lesssim \prod_{j=1}^m \|f_j\|_{L^{p_j}\left(w_j\right)}.
\end{equation}
Then, for all $\textbf{b} = (b_1, \dots, b_m) \in  \BMO^m$, for each multi-index $\alpha$ and vector weight $\textbf{w} = (w_1, \dots, w_m)$ as above, we have
\begin{equation}
\label{multi-commutator:v11}
\|[T, \textbf{b}]_\alpha \textbf{f}\|_{L^p (\nu_{\textbf{w}})} \lesssim  \prod_{j=1}^m \|b_j\|^{\alpha_j}_{\BMO} \|f_j\|_{L^{p_j}\left(w_j\right)}.
\end{equation}
\end{corollary}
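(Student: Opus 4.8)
The plan is to derive Corollary~\ref{cor:multilinear:II} from Corollary~\ref{cor:multilinear:I} by simply checking that the hypothesis of the latter is met with a convenient choice of the parameters $s_j$ and $\theta_j$. Concretely, I would take $s_j = p_j$ for each $j$ and $\theta_j = \theta\, p_j$ for each $j$. With this choice, the requirement $w_j^{\theta_j/p_j}\in A_{s_j}$ appearing in \eqref{product-weight:v1} becomes exactly $w_j^{\theta}\in A_{p_j}$, which is precisely the hypothesis \eqref{product-weight:v2}. Hence \eqref{product-weight:v1} holds, and Corollary~\ref{cor:multilinear:I} applies.

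For the conclusion, the same substitution turns the condition ``$w_j^{\eta_j\theta_j/p_j}\in A_{s_j}$ for some $1<\eta_j<\infty$'' into ``$w_j^{\eta_j\theta}\in A_{p_j}$ for some $1<\eta_j<\infty$''. Here is the one genuine (but routine) point of the argument: I need that every weight $w_j$ with $w_j^\theta\in A_{p_j}$ automatically satisfies $w_j^{\eta_j\theta}\in A_{p_j}$ for some $\eta_j>1$. This is exactly the self-improvement property of Muckenhoupt weights in the classical Euclidean setting, which follows from the reverse H\"older inequality for $A_\infty$ weights (the quantitative version is Lemma~\ref{RH-sharp}, and it was used for the same purpose in the proof of Theorem~\ref{thm:main:II}): if $u=w_j^\theta\in A_{p_j}$ then $u^{\rho}\in A_{p_j}$ for $\rho=\rho_u>1$ as in Lemma~\ref{RH-sharp}, so one may take $\eta_j=\rho_u>1$. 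Once this is in hand, Corollary~\ref{cor:multilinear:I} gives \eqref{multi-commutator:v1}, which reads
$$
\|[T, \textbf{b}]_\alpha \textbf{f}\|_{L^p (\nu_{\textbf{w}})} \lesssim  \prod_{j=1}^m \|b_j\|^{\alpha_j}_{\BMO} \|f_j\|_{L^{p_j}\left(w_j\right)},
$$
which is exactly \eqref{multi-commutator:v11}.

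I do not expect any serious obstacle: the whole content is bookkeeping with the exponents plus an invocation of the reverse H\"older self-improvement, and the implicit constants are harmless since the statement is phrased with $\lesssim$. If one wanted to be completely explicit, one could instead apply the (unstated) multilinear analogue of Theorem~\ref{thm:main:II} alluded to just after the proof of Theorem~\ref{thm:multilinear:main:I}, which already incorporates the self-improvement step; but routing through Corollary~\ref{cor:multilinear:I} keeps the argument shortest. The only thing to be slightly careful about is that the $\eta_j$ produced by the self-improvement may differ from index to index, which is fine because Corollary~\ref{cor:multilinear:I} allows a separate $\eta_j$ for each $j$.
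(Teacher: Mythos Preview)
Your proposal is correct and is precisely the route the paper intends: Corollary~\ref{cor:multilinear:II} is stated without proof as ``a variation on the corollary just stated,'' i.e.\ Corollary~\ref{cor:multilinear:I}, and your substitution $s_j=p_j$, $\theta_j=\theta p_j$ together with the $A_p$ self-improvement is exactly what is needed. One small imprecision: applying Lemma~\ref{RH-sharp} to $u=w_j^\theta$ alone does not give $u^{\rho_u}\in A_{p_j}$; as in the proof of Theorem~\ref{thm:main:II} you must apply it to both $u$ and $u^{1-p_j'}$ and take $\eta_j=\min\{\rho_u,\rho_{u^{1-p_j'}}\}$, but you already point to that argument, so this is cosmetic rather than a gap.
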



\subsection{Multilinear commutators II} The unsatisfying aspect of the statement of Theorem \ref{thm:multilinear:main:I} or of its subsequent corollaries is that the dependence of the constants in the estimates is on the Muckenhoupt constant of each one of the weights $w_j$ that comprise the vector weight $\textbf{w}$. We present here a version with constants depending only on the $A_{\textbf P}$ constant of $\textbf{w}$.

\begin{theorem}\label{thm:multilinear:main:II}
Let $T$ be an $m$-linear operator. Fix $1 < p_j< \infty$ and let $1<p<\infty$ be so that
$$\frac{1}{p}=\sum_{j=1}^m \frac{1}{p_j}.$$
Suppose that there exists an increasing functions $\phi: [1, \infty)\to [0, \infty)$ such that for all $\textbf{w} = (w_1, \dots, w_m)\in A_{\textbf{P}}$, we have
\begin{equation}
\label{vector-weight}
\|T\textbf{f}\|_{L^p (\nu_{\textbf{w}})} \lesssim \phi \left([\textbf{w}]_{A_{\textbf P}}\right)\prod_{j=1}^m \|f_j\|_{L^{p_j}\left(w_j\right)}.
\end{equation}
Then, for all $\textbf{b} = (b_1, \dots, b_m) \in  \BMO^m$ and for each multi-index $\alpha$, we have
\begin{equation}
\label{multi-commutator-II}
\|[T, \textbf{b}]_\alpha \textbf{f}\|_{L^p (\nu_{\textbf{w}})}
\lesssim
\alpha !\,\phi\left(c_{\textbf{P}} [\textbf{w}]_{A_{\textbf P}}\right)
[\bw]_{A_\bp}^{|\alpha|\max\{p,p_1',\ldots,p_m'\}}
\prod_{j=1}^m \|b_j\|^{\alpha_j}_{\BMO} \|f_j\|_{L^{p_j}\left(w_j\right)},
\end{equation}
where $c_{\textbf{P}}=4^{1+\sum_{j=1}^m \min\{1/p_j, 1/p_j'\}}$.
\end{theorem}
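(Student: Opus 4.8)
The plan is to follow the Cauchy integral scheme exactly as in Theorem~\ref{thm:multilinear:main:I}, replacing the product-weight hypothesis by the vector-weight hypothesis \eqref{vector-weight} and tracking how the $A_{\textbf P}$ constant of $\textbf w$ behaves under multiplication of each component by an exponential of a $\BMO$ function. The one genuinely new ingredient needed is an analogue of Lemma~\ref{lemma:BMO->Ap:prod} for the vector class $A_{\textbf P}$: if $\textbf w=(w_1,\dots,w_m)\in A_{\textbf P}$ and $\textbf b=(b_1,\dots,b_m)\in\BMO^m$, then the perturbed vector weight $\textbf w_z := (w_1 e^{\Re(z_1) b_1},\dots,w_m e^{\Re(z_m) b_m})$ still lies in $A_{\textbf P}$ with a controlled constant, provided each $|\Re(z_j)|$ is small. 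I would prove this by expanding $[\textbf w_z]_{A_{\textbf P}}$ directly from Definition~\ref{vector-class}: the term $\fint_Q \nu_{\textbf w_z}\,dx$ becomes $\fint_Q \prod_j w_j^{p/p_j} e^{\Re(z_j)(p/p_j) b_j}\,dx$, and one applies H\"older with exponents matched to the product $\nu_{\textbf w}$ against the single factor $\exp(\sum_j \Re(z_j)(p/p_j) b_j)$; similarly each factor $\fint_Q w_j^{1-p_j'} e^{\Re(z_j)(1-p_j') b_j}\,dx$ splits by H\"older into $[\![w_j^{1-p_j'}]\!]$ times an exponential integral. After regrouping, the exponential pieces reassemble into the $A_{\textbf P}$-type product of $\exp(\lambda_j b_j)$ for suitable $\lambda_j$, which by Lemma~\ref{lemma:BMO->Ap} (applied componentwise, after first checking the small-exponent condition) is bounded by a product of $4^{|\lambda_j|\|b_j\|_{\BMO}}$. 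The precise bookkeeping of exponents is what produces the constant $c_{\textbf P}=4^{1+\sum_j \min\{1/p_j,1/p_j'\}}$.

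Once that perturbation lemma is in hand, the proof proceeds as follows. First, by homogeneity assume $\|b_j\|_{\BMO}=1$ for all $j$, and (as in the scalar case) reduce to real-valued $b_j$. Define the holomorphic map $\Psi(z)(\textbf f) = e^{-\sum_j z_j b_j}\,T(e^{z_1 b_1} f_1,\dots,e^{z_m b_m} f_m)$ on a polydisc $P(0,\vec\delta)$, with radii $\delta_j$ to be chosen small enough that the perturbation lemma applies uniformly on the distinguished boundary $b_0 P(0,\vec\delta)$; concretely $\delta_j$ will be a fixed dimensional-and-$\textbf P$-dependent multiple of $\min\{1,\text{something}\}$, chosen so that $|\Re(z_j)|\le\delta_j$ forces $\textbf w_z\in A_{\textbf P}$ with $[\textbf w_z]_{A_{\textbf P}}\le c_{\textbf P}[\textbf w]_{A_{\textbf P}}$. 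The multivariable Cauchy integral formula \eqref{multicauchy} then represents $[T,\textbf b]_\alpha\textbf f$ as an integral of $\Psi(z)(\textbf f)/\prod_j z_j^{\alpha_j+1}$ over $b_0 P(0,\vec\delta)$. Applying the hypothesis \eqref{vector-weight} with the weight $\textbf w_z$ and using the elementary identity $\|e^{z_j b_j} f_j\|_{L^{p_j}((w_j e^{\Re(z_j)b_j})^{p_j})}=\|f_j\|_{L^{p_j}(w_j^{p_j})}$ (here one must be careful about whether the normalization has $w_j$ or $w_j^{p_j}$, matching the convention used in \eqref{vector-weight}), one gets a uniform bound $\|\Psi(z)\textbf f\|_{L^p(\nu_{\textbf w})}\le \phi(c_{\textbf P}[\textbf w]_{A_{\textbf P}})\prod_j\|f_j\|_{L^{p_j}(w_j)}$ on the distinguished boundary. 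Minkowski's inequality applied to the Cauchy integral then yields the factor $\alpha!\prod_j \delta_j^{-\alpha_j}$, and substituting the values of $\delta_j$ produces the claimed $[\textbf w]_{A_{\textbf P}}^{|\alpha|\max\{p,p_1',\dots,p_m'\}}$ factor.

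The delicate point, and the place where $\max\{p,p_1',\dots,p_m'\}$ enters, is the choice of the radii $\delta_j$. Unlike the scalar case, here we do not have the freedom to assume a strengthened hypothesis $w_j^{\eta_j\theta_j}\in A_{s_j}$; instead we must use a self-improvement (reverse-H\"older / openness) property of the vector class $A_{\textbf P}$ to gain the small room needed for the exponential perturbation, quantified in terms of $[\textbf w]_{A_{\textbf P}}$ alone. This is the analogue of what Lemma~\ref{RH-sharp} did in the proof of Theorem~\ref{thm:main:II}, and I expect it to be the main obstacle: one needs a quantitative statement that $\textbf w\in A_{\textbf P}$ implies $\textbf w^{(1+\epsilon)}\in A_{\textbf P}$ (componentwise) with $\epsilon^{-1}$ comparable to a power of $[\textbf w]_{A_{\textbf P}}$, and then $\delta_j$ is essentially $\epsilon$ times a constant depending on $p_j$. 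The exponent $\max\{p,p_1',\dots,p_m'\}$ is exactly the power of $[\textbf w]_{A_{\textbf P}}$ that governs how the reverse-H\"older exponent degrades across the several factors (the $\fint_Q\nu_{\textbf w}$ term contributing the $p$ and each $\fint_Q w_j^{1-p_j'}$ term contributing the $p_j'$), in direct parallel with the $\max\{1,1/(s-1)\}$ appearing in Theorem~\ref{thm:main:II}. Modulo invoking this known self-improvement for $A_{\textbf P}$ (from \cite{Lerner2009} or its quantitative refinements), the remaining steps are routine adaptations of the arguments already given.
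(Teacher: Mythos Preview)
Your outline is correct and matches the paper's approach: Cauchy integral formula on a polydisc, perturb the vector weight by exponentials, apply the hypothesis to the perturbed weight, and Minkowski. The one imprecision is in your description of the self-improvement step. The paper does \emph{not} prove a vector-level statement of the form ``$\textbf{w}\in A_{\textbf P}$ implies $\textbf{w}^{1+\epsilon}\in A_{\textbf P}$ componentwise''; instead it uses the characterization from \cite{Lerner2009} that $\textbf{w}\in A_{\textbf P}$ is equivalent to the \emph{scalar} conditions $\nu_{\textbf w}\in A_{2p}$ and $\sigma_j:=w_j^{1-p_j'}\in A_{2p_j'}$, with $[\nu_{\textbf w}]_{A_{2p}}\le[\textbf w]_{A_{\textbf P}}^p$ and $[\sigma_j]_{A_{2p_j'}}\le[\textbf w]_{A_{\textbf P}}^{p_j'}$, and then applies the scalar reverse H\"older Lemma~\ref{RH-sharp} to each of these to obtain a common exponent $r>1$ with $r'\lesssim[\textbf w]_{A_{\textbf P}}^{\max\{p,p_1',\dots,p_m'\}}$. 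The H\"older splitting in the $A_{\textbf P}$ expression for $\widetilde{\textbf w}$ is then done with exponents $(r,r')$, the $r$-parts recombine via the reverse H\"older inequalities back to $[\textbf w]_{A_{\textbf P}}$ (up to a factor of $4$), and the $r'$-parts are the pure exponential integrals handled by Lemma~\ref{lemma:BMO->Ap}, forcing $\delta_j\le\min\{1,p_j-1\}/(p_j r')$. Once you make this substitution your sketch goes through verbatim.
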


For the convenience of notation, we will only prove this result for bilinear operators ($m=2$). Before we begin our argument, let us recall that if $\textbf{w}=(w_1, w_2)$ and $\textbf{P}=(p_1, p_2)$, then we have \cite{Lerner2009} the following equivalence:
$$\textbf{w}\in A_{\textbf{P}}\Leftrightarrow \nu_{\textbf{w}}=w_1^{p/p_1}w_2^{p/p_2}\in A_{2p}, \sigma_1:=w_1^{1-p_1'}\in A_{2p_1'},\,\,\text{and}\,\, \sigma_2:=w_2^{1-p_2'}\in A_{2p_2'}.$$
Moreover, by \cite[Lemma 3.1]{DLP2015}, we have $[\sigma_j]_{A_{2p_j'}}\leq [\bw]_{A_{\textbf P}}^{p_j'}, j=1, 2$, and since
$p/p_1'+p/p_2'=2p-1$, H\"older's inequality gives
\begin{multline*}\left(\fint_Q \nu_\bw\, dx\right)\left(\fint_Q\nu_\bw^{1-(2p)'}\,dx\right)^{2p-1}
\\
\leq \left(\fint_Q \nu_\bw\,dx\right)\left(\fint_Q \sigma_1\,dx\right)^{p/p_1'}\left(\fint_Q \sigma_2\,dx\right)^{p/p_2'}\leq [\bw]_{A_\bp}^p,
\end{multline*}
that is, $[\nu_\bw]_{A_{2p}}\leq [\bw]_{A_\bp}^p$.

\begin{proof}
As before, and without loss of generality, we assume that $b_j, j=1, 2$ are real valued and normalized so that their $\BMO$ norms are equal to 1. By repeating the argument with the Cauchy integral trick in Theorem \ref{thm:multilinear:main:I}, given $\textbf{w}\in A_{\textbf{P}}$ we see that everything reduces to showing that for some appropriate $\delta_1,\delta_2 >0$ (to be chosen later) and for $|z_1|=\delta_1$, $|z_2|=\delta_2$, we have
$$
\widetilde{\textbf{w}}:=
(\widetilde{w}_1,\widetilde{w}_2)
:=
(w_1 e_{b_1}, w_2 e_{b_2})
:=
(w_1 e^{-\text{Re}(z_1)p_1 b_1} , w_2 e^{-\text{Re}(z_2)p_2 b_2}) \in A_\bp.
$$
Note that as observed above $\nu_\bw\in A_{2p}$ and $\sigma_j:=w_j^{1-p_j'}\in A_{2p_j'}$, for $j=1,2$. Using now Lemma \ref{RH-sharp} and writing, for a given weight $w$, $\rho(w)$ instead of $\rho_w$, we can find $r=r(\bw)=\min \{\rho (\nu_\bw), \rho (\sigma_1),\rho (\sigma_2) \}>1$ so that
\begin{equation}\label{eq:afrfr}
r'\sim \max\{[\nu_\bw]_{A_{2p}}, [\sigma_1]_{A_{2p_1}}, [\sigma_2]_{A_{2p_2}}\}\le
[\bw]_{A_\bp}^{\max\{p,p_1',p_2'\}}
\end{equation}
and the following reverse H\"older inequalities hold:
\begin{equation}\label{rh1}
\left( \fint_Q \nu_\bw^{r}\,dx \right)^{1/r} \leq 2 \fint_Q\nu_\bw \,dx
\end{equation}
and for $j=1,2$
\begin{equation}\label{rh2}
\left( \fint_Q \sigma_j^{r}\,dx \right)^{1/r} \leq 2 \fint_Q \sigma_j\,dx.
\end{equation}
Using these, H\"older's inequality and regrouping terms, we get
\begin{align*}
&\left( \fint_Q \nu_{\widetilde{\bw}}\,dx \right)^{1/p}
\
\prod_{j=1}^2 \left( \fint_Q \widetilde{w}_j^{1-p_j'} \,dx \right)^{1/p_j'}
\\
&\qquad\qquad=
\left( \fint_Q \nu_{\bw}\,e_{b_1}^{p/p_1}\,e_{b_2}^{p/p_1}\,dx \right)^{1/p}
\prod_{j=1}^2 \left( \fint_Q \sigma_j \,e_{b_j}^{1-p_j'}\,dx \right)^{1/p_j'}
\\
&\qquad\qquad\le
\left( \fint_Q \nu_{\bw}^{r}\,dx \right)^{1/(p r)}
\left( \fint_Q e_{b_1}^{r'p/p_1}\,e_{b_2}^{r'p/p_1}\,dx \right)^{1/(p r')}
\\
&\qquad\qquad\quad\qquad
\prod_{j=1}^2
\left( \fint_Q \sigma_j^{r}\,dx \right)^{1/(p_j'r)}
\,
\left( \fint_Q e_{b_j}^{r'(1-p_j')}\,dx \right)^{1/p_j'r'}
\\
&
\qquad\qquad\le
4[\bw]_{A_\bp}\prod_{j=1}^2 \left( \fint_Q e_{b_j}^{r'}\,dx \right)^{1/(p_j r')} \left( \fint_Q e_{b_j}^{r'(1-p_j')}\,dx \right)^{1/p_j'r'}
\\
&
\qquad\qquad\le
4[\bw]_{A_\bp}\prod_{j=1}^2 \left[e^{-\text{Re}(z_j)p_j r' b_j} \right]_{A_{p_j}}^{1/(p_j r')}
\\
&
\qquad\qquad\le
4^{1+\delta_1+\delta_2}[\bw]_{A_\bp},
\end{align*}
where the last estimates uses Lemma \ref{lemma:BMO->Ap} as long as we assume that
$\delta_j\leq \frac{\min\{1, p_j-1\}}{p_jr'}$, $j=1,2$. Notice that this choice implies that $\delta_j\le \min\{1/p_j, 1/p_j'\}$. On the other hand, recalling \eqref{eq:afrfr} and assuming further than
$\delta_j\sim [\bw]_{A_\bp}^{-\max\{p,p_1',p_2'\}}$ we eventually obtain
$$
\|[T, \textbf{b}]_\alpha \textbf{f}\|_{L^p (\nu_{\textbf{w}})} \lesssim \alpha !\delta_1^{-\alpha_1}\delta_2^{-\alpha_2}
\phi\left(c_{\textbf{P}}[\textbf{w}]_{A_{\textbf P}}\right)  \|f_1\|_{L^{p_1}\left(w_1\right)}\|f_2\|_{L^{p_2}\left(w_2\right)}.
$$
with $c_{\textbf{P}}=4^{1+\min\{1/p_1, 1/p_1'\}+\min\{1/p_2, 1/p_2'\}}$. This easily gives the desired estimate.
\end{proof}

\subsection{Multilinear commutators vs weighted estimates}
In this subsection we give a multilinear version of Theorem \ref{thm:Comm-vs-Weights},
thus showing that there is a two way bridge between weighted estimates for commutators of the operator and weighted estimates for the operator itself in the multilinear setting as well.  We do not include the proof as it follows the linear situation very closely using the Cauchy integral formula for several complex variables \eqref{multicauchy} and a multivariable Taylor series.

\begin{theorem}\label{thm:adedew}
Let ${\textbf P}=(p_1,\ldots,p_m)\in (1,\infty)^m$ and $p>1$ be such that $\frac1p=\frac1{p_1}+\cdots+\frac1{p_m}$, and let $T$ be a multilinear operator.
\begin{list}{$(\theenumi)$}{\usecounter{enumi}\leftmargin=1cm \labelwidth=1cm \itemsep=0.2cm \topsep=0cm \renewcommand{\theenumi}{\alph{enumi}}}
\item Suppose there are $\lambda_0>0$ and a function $\Phi:[0,\lambda_0]^m\rightarrow [0,\infty)$ that is increasing in each variable such that for each $\textbf{b}=(b_1,\ldots,b_m)\in \BMO^m$ we have
\begin{align*}
\|T\textbf{f}\|_{L^p\big(\exp(\sum_{j=1}^\infty \frac{p\lambda_j}{p_j}b_j)\big)}
\leq
\Phi(|\lambda_1|\,\|b_1\|_{\BMO},\ldots,|\lambda_m|\,\|b_m\|_{\BMO})\prod_{j=1}^m\|f_j\|_{L^{p_j}\big(\exp(\lambda_j b_j)\big)}
\end{align*}
for $|\lambda_j|\le\lambda_0/\|b_j\|_{\BMO}$.  Then, for each multi-index $\alpha$ and $\textbf{b}\in \BMO^m$
$$
\|[T,\textbf{b}]_\alpha\textbf{f}\|_{L^p(\R^n)}
\leq
\Phi(\lambda_0,\ldots,\lambda_0) \alpha!\prod_{j=1}^m\left(\frac{p_j\|b_j\|_{\BMO}}{\lambda_0}\right)^{\alpha_j} \|f_j\|_{L^{p_j}(\R^n)}.
$$
\item Conversely, suppose there exist $C_0,\lambda_0>0$ such that for $\textbf{b}=(b_1,\ldots,b_m)\in \BMO^m$ and for every multi-index $\alpha$ we have
$$
\|[T,\textbf{b}]_\alpha\textbf{f}\|_{L^p(\R^n)}
\leq
C_0 \alpha!\prod_{j=1}^m\left(\frac{p_j\|b_j\|_{\BMO}}{\lambda_0}\right)^{\alpha_j} \|f_j\|_{L^{p_j}(\R^n)}.
$$
Then, there exists a function $\Phi:[0,\lambda_0)^m\rightarrow [0,\infty)$, increasing in each variable, such that for every $\textbf{b}\in \BMO^m$ we have
\begin{align*}
\|T\textbf{f}\|_{L^p\big(\exp(\sum_{j=1}^\infty \frac{p\lambda_j}{p_j}b_j)\big)}\leq \Phi(|\lambda_1|\,\|b_1\|_{\BMO},\ldots,|\lambda_m|\,\|b_m\|_{\BMO})\prod_{j=1}^m\|f_j\|_{L^{p_j}\big(\exp(\lambda_j b_j)\big)},
\end{align*}
for all $\lambda_1,\ldots,\lambda_m$ satisfying $|\lambda_j|<\lambda_0/\|b_j\|_{\BMO}$.

\end{list}
\end{theorem}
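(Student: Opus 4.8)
The plan is to mirror the proof of Theorem~\ref{thm:Comm-vs-Weights} almost verbatim, upgrading every gadget from the scalar to the several-variable holomorphic setting. First I would treat part $(a)$. By homogeneity assume $\|b_j\|_{\BMO}=1$ for all $j$, set $\delta_j=\lambda_0/p_j$, and introduce the polydisc map
$$
\Psi(z)\textbf{f}(x)=e^{-\sum_{j=1}^m z_j b_j(x)}\,T\big(e^{z_1 b_1}f_1,\ldots,e^{z_m b_m}f_m\big)(x),
\qquad z=(z_1,\ldots,z_m).
$$
Exactly as in \eqref{multicauchy}, the several-variable Cauchy integral formula over the distinguished boundary $b_0P(0,\vec\delta)$ expresses $[T,\textbf{b}]_\alpha\textbf{f}=D^\alpha\Psi_z(\textbf{f})|_{z=0}$ as $\frac{\alpha!}{(2\pi i)^m}\int_{b_0P(0,\vec\delta)}\Psi_z(\textbf{f})\,z_1^{-\alpha_1-1}\cdots z_m^{-\alpha_m-1}\,dz$. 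On the distinguished boundary $|z_j|=\delta_j$ one has $p_j|\Re(z_j)|\le p_j\delta_j=\lambda_0$, so the hypothesis applies with $\lambda_j=-p_j\Re(z_j)$: writing $f_j=e^{z_jb_j}g_j$ is unnecessary here since we take $\textbf{f}$ itself as the input and note
$$
\|e^{z_jb_j}f_j\|_{L^{p_j}(\exp(-p_j\Re(z_j)b_j))}=\|f_j\|_{L^{p_j}(\R^n)}.
$$
Hence $\|\Psi_z\textbf{f}\|_{L^p(\R^n)}\le\Phi(p_1|\Re(z_1)|,\ldots,p_m|\Re(z_m)|)\prod_j\|f_j\|_{L^{p_j}(\R^n)}\le\Phi(\lambda_0,\ldots,\lambda_0)\prod_j\|f_j\|_{L^{p_j}(\R^n)}$ by monotonicity in each variable. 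Minkowski's inequality applied to the polydisc integral, together with $\int_{|z_j|=\delta_j}|z_j|^{-\alpha_j-1}|dz_j|=2\pi\delta_j^{-\alpha_j}$ for each $j$, yields the claimed bound with constant $\Phi(\lambda_0,\ldots,\lambda_0)\,\alpha!\,\prod_j\delta_j^{-\alpha_j}=\Phi(\lambda_0,\ldots,\lambda_0)\,\alpha!\,\prod_j(p_j/\lambda_0)^{\alpha_j}$.

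For part $(b)$ I would again normalize $\|b_j\|_{\BMO}=1$ and use that $z\mapsto\Psi(z)\textbf{f}(x)$ is holomorphic in each variable on the polydisc $\prod_j\{|z_j|<\lambda_0/p_j\}$, so it admits a multivariable Taylor expansion $\Psi(z)\textbf{f}(x)=\sum_\alpha\frac{D^\alpha\Psi_0\textbf{f}(x)}{\alpha!}z^\alpha=\sum_\alpha\frac{[T,\textbf{b}]_\alpha\textbf{f}(x)}{\alpha!}z^\alpha$, where $z^\alpha=z_1^{\alpha_1}\cdots z_m^{\alpha_m}$ and $\alpha!=\alpha_1!\cdots\alpha_m!$. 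Taking absolute values, applying the triangle inequality in $L^p$, and invoking the hypothesis gives, for $|z_j|<\lambda_0/p_j$,
$$
\|\Psi(z)\textbf{f}\|_{L^p(\R^n)}\le\sum_\alpha\frac{|z^\alpha|}{\alpha!}\|[T,\textbf{b}]_\alpha\textbf{f}\|_{L^p(\R^n)}
\le C_0\prod_{j=1}^m\Big(\sum_{\alpha_j=0}^\infty\Big(\frac{p_j|z_j|}{\lambda_0}\Big)^{\alpha_j}\Big)\prod_{j=1}^m\|f_j\|_{L^{p_j}(\R^n)},
$$
the product of geometric series converging precisely because each $p_j|z_j|/\lambda_0<1$. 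This defines $\Phi(s_1,\ldots,s_m)=C_0\prod_{j=1}^m(1-s_j/\lambda_0)^{-1}$ on $[0,\lambda_0)^m$, manifestly increasing in each variable. Finally, specializing $z_j=-\lambda_j/p_j$ with $|\lambda_j|<\lambda_0$ and replacing $f_j$ by $g_je^{\lambda_jb_j/p_j}$, one has $\|e^{z_jb_j}f_j\|_{L^{p_j}(\R^n)}=\|g_j\|_{L^{p_j}(\exp(\lambda_jb_j))}$ and $e^{-\sum_j z_jb_j}=\exp(\sum_j\frac{\lambda_j}{p_j}b_j)$, so $\|\Psi(z)\textbf{f}\|_{L^p}=\|T\textbf{g}\|_{L^p(\exp(\sum_j\frac{p\lambda_j/p_j\cdot b_j}{p}))}$; after unwinding, the weight is $\exp(\sum_j\frac{p\lambda_j}{p_j}b_j)$ as written, and we recover the stated conclusion.

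The routine calculations are genuinely routine; the only points requiring care are bookkeeping ones. The first is making sure the exponents in the weight match: on the left of the display in the hypothesis the weight is $\exp(\sum_j\frac{p\lambda_j}{p_j}b_j)$, whereas $\Psi(z)\textbf{f}$ carries the factor $e^{-\sum_j z_jb_j}=e^{\sum_j(\lambda_j/p_j)b_j}$, and raising the $L^p$ norm to the $p$-th power converts the latter into the former — one must track the $p$ versus $p_j$ normalization throughout so the substitution $z_j=-\lambda_j/p_j$ lands exactly on the target weight. The second is the multivariable Cauchy/Taylor justification: one should cite \cite{ABKP} (as done for the scalar case) and the elementary holomorphy of $z\mapsto\Psi(z)\textbf{f}(x)$ in each variable, together with Fubini/dominated convergence to legitimize interchanging the infinite sum with the $L^p$ norm — the geometric-series majorant provides exactly the domination needed. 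I expect no real obstacle; this is precisely why the authors say they omit the proof, and the above is the natural filling-in.
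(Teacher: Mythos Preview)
Your proposal is correct and follows precisely the approach the paper indicates: the authors omit the proof, stating only that it ``follows the linear situation very closely using the Cauchy integral formula for several complex variables \eqref{multicauchy} and a multivariable Taylor series,'' and your argument is exactly that filling-in, with the correct choice $\delta_j=\lambda_0/p_j$, the substitution $\lambda_j=-p_j\Re(z_j)$ in part $(a)$, and the product-of-geometric-series function $\Phi(s_1,\ldots,s_m)=C_0\prod_j(1-s_j/\lambda_0)^{-1}$ in part $(b)$. The bookkeeping points you flag (matching the $p$ versus $p_j$ normalization in the weight, and justifying the interchange of sum and norm via the geometric majorant) are indeed the only places requiring attention, and you handle them correctly.
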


After this work was completed we have learned that Kunwar and Ou \cite{KO} have extended some of our methods to the two-weight setting.

\section{Applications}\label{applications}
The goal of this section is to apply our general results connecting weighted estimates for an operator and those of its commutators in various situations, some explored and some unexplored before. We begin with the case of linear or linearizable operators.

\subsection{The Coifman-Rochberg-Weiss result}
As a first application, from Theorem \ref{thm:main:II} we recover the classical Coifman-Rochberg-Weiss (see \cite{CRW1976}) whose quantitative control of the constants appeared in \cite{CPP2012}.
\begin{corollary}
Let $T$ be either a linear or a linearizable operator. Fix $1<p<\infty$,  and suppose that  there exist  an increasing function $\phi: [1, \infty) \longrightarrow  [0, \infty)$ such that for each $f \in L^p (w)$ there holds
$$
\|Tf\|_{L^p (w)}
  \leq
\phi \left( [w]_{A_p} \right) \|f\|_{L^p (w)},
\qquad
\forall\, w\in A_p.
$$
Then, for each $b \in  \BMO$ and for every $w\in A_p$
and, for every $k\ge 1$, there holds
\begin{align*}
\big\|T_b^k f \big\|_{L^p (w)}
& \leq
k!\left(\frac{2^{2\,\max\{p,p'\}+n+2}\,p}{\min \left\{ 1, p-1 \right\}}\right)^k\,
[w]_{A_p}^{k\,\max\{1,\frac1{p-1}\}}\,
\phi
\left( 4^{ \min\{1,p-1\}}\,2^p\, [w]_{A_p}\right)\,\\
 & \hspace{15mm}
\|b\|_{ \BMO}^k\, \|f\|_{L^p (w)}.
\end{align*}
\end{corollary}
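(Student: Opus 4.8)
The plan is to obtain this corollary as an immediate specialization of Theorem~\ref{thm:main:II}, so there is essentially nothing to prove beyond a change of normalization. I would apply that theorem with the parameters $q=p$, $\theta=p$, and $s=p$ (all admissible: $1<p<\infty$ gives $1<s<\infty$, and $\theta=p>0$, $q=p\ge 1$), and with the weight appearing in Theorem~\ref{thm:main:II} taken to be $w^{1/p}$ for the given $w\in A_p$. Since $(w^{1/p})^p=w$ and $(w^{1/p})^{\theta}=(w^{1/p})^p=w$, the class $\{(w^{1/p})^{\theta}\in A_s\}$ is exactly $\{w\in A_p\}$, and the hypothesis \eqref{eqn:main:hyp:II} of Theorem~\ref{thm:main:II} becomes verbatim the assumed estimate
$\|Tf\|_{L^p(w)}\le \phi([w]_{A_p})\,\|f\|_{L^p(w)}$, $w\in A_p$, of the corollary. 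Thus the hypothesis of Theorem~\ref{thm:main:II} is satisfied with no additional argument, and the linearizable case is inherited directly from that theorem.

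Next I would read off the conclusions: for $k=1$ this is \eqref{eqn:main:k=1:II}, and for $k\ge 2$ it is \eqref{eqn:main:k:II}; together these cover all $k\ge 1$, since $k!=1$ for $k=1$. Under the above substitution one has $[w^{\theta}]_{A_s}=[w]_{A_p}$, $\max\{s,s'\}=\max\{p,p'\}$, $\min\{1,s-1\}=\min\{1,p-1\}$, and $\max\{1,\tfrac1{s-1}\}=\max\{1,\tfrac1{p-1}\}$, while $L^q(w^q)=L^p(w)$ and $L^p(w^p)=L^p(w)$ (recall $q=p$). Substituting these identities into the right-hand sides of \eqref{eqn:main:k=1:II} and \eqref{eqn:main:k:II} produces literally the displayed bound, with $\|b\|_{\BMO}^k$ and $\|f\|_{L^p(w)}$ on the right. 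Here one uses the identification $BMO=\BMO$ in the classical Euclidean setting, recorded just before the statement of Theorem~\ref{thm:main:I}, so that the hypotheses and conclusion may be phrased with either norm.

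The only point needing any care is bookkeeping: converting the $L^p(w^p)$-normalization used throughout Section~\ref{linear} into the $L^p(w)$-normalization in which the corollary is stated (equivalently, noting that every $w\in A_p$ is $(w^{1/p})^p$), and checking that $\theta=s=p$ are legitimate parameters in Theorem~\ref{thm:main:II}. I do not anticipate any genuine obstacle; this corollary is precisely the kind of ``automatic'' consequence of the weighted theory that the introduction advertises, and it recovers the quantitative Coifman--Rochberg--Weiss bound of \cite{CPP2012}.
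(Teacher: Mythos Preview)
Your proposal is correct and is exactly the approach of the paper: the corollary is stated there as an immediate application of Theorem~\ref{thm:main:II}, with no further argument given. Your choice of parameters $q=p$, $\theta=p$, $s=p$ and the relabeling $w\mapsto w^{1/p}$ is precisely the substitution that converts the $L^p(w^p)$ normalization of Theorem~\ref{thm:main:II} into the $L^p(w)$ normalization of the corollary, and all constants match verbatim.
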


If we now take $T$ a Calder\'on-Zygmund operator, the $A_2$-conjecture proved by Hyt\"onen in \cite{Hytonen2012} gives the result in \cite{CPP2012}:
$$
\|Tf\|_{L^p (w)}
\leq
C\,[w]_{A_p}^{\max\{1,\frac1{p-1}\}} \|f\|_{L^p (w)},
\qquad
\forall\, w\in A_p,
$$
and therefore for the commutators one obtains
$$
\big\|T_b^k f \big\|_{L^p (w)}
\leq
C_k
[w]_{A_p}^{(k+1)\,\max\{1,\frac1{p-1}\}}\,
\|b\|_{ \BMO}^k\, \|f\|_{L^p (w)}.
$$

\subsection{Fractional integrals}
We begin by recalling the notion of the Riesz potentials or fractional integrals. For a more thorough introduction see \cite[Chapter 5]{Stein1970}.
Given $0 < \alpha < n$, we write
$$
I_\alpha f(x)=(-\Delta)^{-\alpha/2}f(x)=c_\alpha\,\int_{\mathbb{R}^n} \frac{f(y)}{|x-y|^{n - \alpha}}\, dy
$$
where $c_\alpha$ is a constant depending on $n$ and $\alpha$.
The relevant class of weights is $A_{p,q}$ defined as follows: $w \in A_{p, q}$  if
$$
[w]_{A_{p,q}}:=\sup_{Q\subset\mathbb{R}^n} \left( \fint_Q w^q \,dx \right)\left( \fint_Q w^{-p'} \,dx \right)^{q/p'} < \infty.
$$
The weighted norm inequalities for $I_\alpha$ were obtained by B. Muckenhoupt and R. Wheeden \cite{MW1974} and the sharp behavior in terms of the weight constants by \cite{Lacey2010}.  The precise estimate is as follows: for every $0 < \alpha < n$, $1 < p < n / \alpha$, $1/p - 1/q = \alpha / n$, and $w\in A_{p,q}$ one has
$$
\|I_\alpha f\|_{L^q (w^q)}
\leq
C_{p,q,\alpha}
[w]_{A_{p,q}}^{(1-\frac{\alpha}{n})\,\max\{1,\frac{p'}{q}\}}\, \|f\|_{L^p (w^p)}.
$$

It is easy to see that $w \in A_{p, q}$ iff $w^q\in A_{q\,\frac{n-\alpha}{n}}$ and moreover
$$
\big[w^q\big]_{A_{q\,\frac{n-\alpha}{n}}}
=
[w]_{A_{p,q}}.
$$
The commutators of fractional integrals and $BMO$ functions were first studied in \cite{SC1982}. We can use Theorem \ref{thm:main:II} and obtain the following quantitative weighted result which was shown in \cite{CM2012} for $k=1$, but seems to be new when $k>1$. \begin{corollary}\label{linear-fractional}
Fix $0 < \alpha < n$, $1 < p < n / \alpha$ and  $1/p - 1/q = \alpha / n$. For every $k\ge 1$ and $b\in  \BMO$ one has
$$
\big\|(I_\alpha)_b^k f \big\|_{L^q (w^q)}
\leq
C_{p,q,\alpha} k!
\left(\frac{2^{2\,\max\{s,s'\}+n+2}\,q}{\min \left\{ 1, s-1 \right\}}\right)^k\,
[w]_{A_{p,q}}^{(k+1-\frac{\alpha}{n})\,\max\{1,\frac{p'}{q}\}}\,
\|b\|_{ \BMO}^k\, \|f\|_{L^p (w^p)},
$$
for every $w\in A_{p,q}$ and where $s=q\,\frac{n-\alpha}{n}$.
\end{corollary}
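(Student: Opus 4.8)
The plan is to apply Theorem \ref{thm:main:II} directly to the fractional integral operator $T = I_\alpha$, after matching up the parameters appearing in the weighted norm inequality for $I_\alpha$ with the abstract hypothesis \eqref{eqn:main:hyp:II}. First I would set $s = q\,\frac{n-\alpha}{n}$, and record the two identities already noted in the excerpt: that $w \in A_{p,q}$ is equivalent to $w^q \in A_s$, with $[w^q]_{A_s} = [w]_{A_{p,q}}$. This dictates the choice $\theta = q$, since then $w^\theta = w^q \in A_s$ is exactly the condition $w \in A_{p,q}$, and $[w^\theta]_{A_s} = [w]_{A_{p,q}}$. Note that $1 < p < n/\alpha$ forces $q > p > 1$ and $s = q(n-\alpha)/n > 1$, so the hypotheses $1 \le p, q < \infty$, $\theta > 0$, $1 < s < \infty$ of Theorem \ref{thm:main:II} are met.

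Next I would express the sharp Lacey et al. bound for $I_\alpha$ in the required form \eqref{eqn:main:hyp:II}. The estimate
$$
\|I_\alpha f\|_{L^q(w^q)} \le C_{p,q,\alpha}\,[w]_{A_{p,q}}^{(1-\frac{\alpha}{n})\max\{1,\frac{p'}{q}\}}\,\|f\|_{L^p(w^p)}
$$
becomes, after substituting $[w]_{A_{p,q}} = [w^\theta]_{A_s}$, precisely $\|I_\alpha f\|_{L^q(w^q)} \le \phi([w^\theta]_{A_s})\,\|f\|_{L^p(w^p)}$ with the increasing function
$$
\phi(t) = C_{p,q,\alpha}\, t^{(1-\frac{\alpha}{n})\max\{1,\frac{p'}{q}\}}.
$$
With this $\phi$ in hand I would simply invoke \eqref{eqn:main:k:II} (and \eqref{eqn:main:k=1:II} for $k=1$). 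The exponent on $\big[w^\theta\big]_{A_s}$ in the conclusion is $k\max\{1,\frac1{s-1}\}$ coming from the lemma-based self-improvement factor, times the $k$-th power $\phi(\cdots)^{\!?}$ — actually $\phi$ appears only once — so the total power of $[w]_{A_{p,q}}$ is $k\max\{1,\frac1{s-1}\} + (1-\frac\alpha n)\max\{1,\frac{p'}{q}\}$. I would then check the two routine algebraic reductions: that $\max\{1,\frac{1}{s-1}\} = \max\{1,\frac{p'}{q}\}$ when $s = q(n-\alpha)/n$ and $1/p - 1/q = \alpha/n$ (since $s - 1 = q(n-\alpha)/n - 1$, and a short computation using $1/p - 1/q = \alpha/n$ shows $\frac{1}{s-1} = \frac{p'}{q}$ up to the relevant range), and that $(1-\frac\alpha n)\max\{1,\frac{p'}{q}\}$ combines with the $k$ copies to give the stated total exponent $(k+1-\frac\alpha n)\max\{1,\frac{p'}{q}\}$. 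The constant prefactor $k!\big(\frac{2^{2\max\{s,s'\}+n+2}\,q}{\min\{1,s-1\}}\big)^k$ is transcribed verbatim from Theorem \ref{thm:main:II} with $\theta = q$, and $4^{\min\{1,s-1\}}2^s$ gets absorbed into $C_{p,q,\alpha}$ after applying $\phi$.

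The main (and really only) obstacle is the bookkeeping of the exponent on the weight constant: one must verify carefully that $\frac{1}{s-1} = \frac{p'}{q}$ under the relations $1/p - 1/q = \alpha/n$ and $s = q(n-\alpha)/n$, so that the power coming from the self-improvement step (Lemma \ref{RH-sharp}, incorporated into Theorem \ref{thm:main:II}) matches the power in the known sharp bound for $I_\alpha$ and the exponents add cleanly to $(k+1-\frac\alpha n)\max\{1,\frac{p'}{q}\}$. Everything else is a direct substitution, so the proof is short: identify $\theta = q$ and $s = q(n-\alpha)/n$, rewrite the Lacey et al. bound as hypothesis \eqref{eqn:main:hyp:II} with the power function $\phi$ above, apply Theorem \ref{thm:main:II}, and simplify the exponent.
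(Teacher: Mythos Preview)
Your proposal is correct and follows exactly the same route as the paper: the paper's entire argument is the sentence ``We can use Theorem \ref{thm:main:II}'' together with the identification $w\in A_{p,q}\iff w^q\in A_{q(n-\alpha)/n}$ with equal constants, which is precisely your choice $\theta=q$, $s=q(n-\alpha)/n$, and $\phi(t)=C_{p,q,\alpha}\,t^{(1-\alpha/n)\max\{1,p'/q\}}$. Your verification that $\tfrac{1}{s-1}=\tfrac{p'}{q}$ (hence $\max\{1,\tfrac{1}{s-1}\}=\max\{1,\tfrac{p'}{q}\}$) and the resulting additivity of exponents to $(k+1-\tfrac{\alpha}{n})\max\{1,\tfrac{p'}{q}\}$ is the only computation the paper leaves implicit, and you have it right.
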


\subsection{Operators associated with the Kato conjecture}
The following estimate models the behavior of operators associated with the Kato conjecture (see below).
\begin{corollary}\label{corol:restricted}
Let $1\le r_-<p<r_+\le\infty$ and assume that
$$
\|Tf\|_{L^p(w)}
\le
C_w\,\|f\|_{L^p(w)},
\qquad
\forall\,  w \in A_{\frac{p}{r_-}} \cap RH_{(\frac{r_+}{p})'}.
$$
Then, for every $b\in \BMO$ and for every $k\ge 1$
$$
\|T_b^kf\|_{L^p(w)}
\le
C_k\,\|b\|_{ \BMO}^k
\|f\|_{L^p(w)},
\qquad
\forall\,  w \in A_{\frac{p}{r_-}} \cap RH_{(\frac{r_+}{p})'}.
$$
\end{corollary}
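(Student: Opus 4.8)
The strategy is to recognize the weight class $A_{p/r_-}\cap RH_{(r_+/p)'}$ as exactly a class of the form $\{v:\ v^{\theta}\in A_{s}\}$, so that Corollary~\ref{corol:restricted} becomes a direct application of Theorem~\ref{thm:main:I} (or, if explicit constants are wanted, Theorem~\ref{thm:main:II}) with $q=p$ and $\BMO=BMO$. The arithmetic needed is furnished by Proposition~\ref{prop:weights}~$(g)$.

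Concretely, since the weighted Lebesgue spaces in Theorem~\ref{thm:main:I} have the form $L^p(v^p)$, I would first put $w=v^{p}$, so that $L^p(w)=L^p(v^p)$. Proposition~\ref{prop:weights}~$(g)$ says that $w\in A_{p/r_-}\cap RH_{(r_+/p)'}$ if and only if $w^{(r_+/p)'}\in A_{(r_+/p)'(p/r_- -1)+1}$; equivalently,
$$
v^{\theta}\in A_{s},\qquad \theta:=p\,(r_+/p)',\qquad s:=(r_+/p)'\Big(\tfrac{p}{r_-}-1\Big)+1 .
$$
The hypothesis $r_-<p<r_+$ gives $\theta>0$ and $1<s<\infty$, since $p/r_- -1>0$ and $1\le (r_+/p)'<\infty$. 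I would check the two endpoint cases explicitly: when $r_-=1$ one has $\theta=p(r_+/p)'$ and $s=(r_+/p)'(p-1)+1$, while when $r_+=\infty$ one has $(r_+/p)'=1$, $RH_1=A_\infty$, and the condition degenerates to $w\in A_{p/r_-}$, matching $v^{p}\in A_{p/r_-}$.

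With this dictionary, the assumption of the corollary is precisely the hypothesis of Theorem~\ref{thm:main:I} for these $\theta$, $s$ and $q=p$. That theorem then yields, for every $b\in\BMO$ and $k\ge1$, the boundedness of $T_b^k$ on $L^p(v^p)$ whenever $v^{\theta\,\eta}\in A_s$ for some $\eta>1$. Since in the Euclidean setting Muckenhoupt weights satisfy reverse H\"older inequalities (Proposition~\ref{prop:weights}~$(c)$--$(d)$, or the quantitative Lemma~\ref{RH-sharp}), the condition $v^{\theta}\in A_s$ self-improves to $v^{\theta\,\eta}\in A_s$ for some $\eta>1$; alternatively one may simply invoke Theorem~\ref{thm:main:II}, which incorporates this self-improvement and moreover gives constants explicit in $[v^\theta]_{A_s}$. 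Either way $T_b^k$ is bounded on $L^p(v^p)$ for all $v^{\theta}\in A_s$, and undoing $w=v^p$ returns the claimed estimate for every $w\in A_{p/r_-}\cap RH_{(r_+/p)'}$.

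There is no serious obstacle; the points requiring care are (i) the index arithmetic coming out of Proposition~\ref{prop:weights}~$(g)$ and the verification that $1<s<\infty$, including the endpoint cases $r_-=1$ and $r_+=\infty$, and (ii) reconciling the unspecified constant $C_w$ of the hypothesis with what Theorem~\ref{thm:main:I} requires --- which is why the conclusion is read qualitatively ($C_k$ allowed to depend on $w$); for a quantitative version one assumes $C_w$ is controlled by the $A_{p/r_-}\cap RH_{(r_+/p)'}$ characteristic of $w$ and tracks the corresponding $\phi$ through Theorem~\ref{thm:main:II}.
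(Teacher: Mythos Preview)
Your proposal is correct and follows essentially the same route as the paper: use Proposition~\ref{prop:weights}~$(g)$ to rewrite the condition $w\in A_{p/r_-}\cap RH_{(r_+/p)'}$ as $w^{(r_+/p)'}\in A_{(r_+/p)'(p/r_--1)+1}$ and then invoke Theorem~\ref{thm:main:II} qualitatively. The only difference is cosmetic---you make the substitution $w=v^p$ explicit to match the $L^p(v^p)$ formulation of the main theorems, whereas the paper leaves this implicit---and your remark about reading the constant $C_w$ qualitatively is exactly the caveat the paper signals with ``this time without paying attention to constants.''
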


Notice that if $r_+=\infty$ it is understood that the condition $RH_1$ is vacuous. This result follow easily from Theorem \ref{thm:main:II} (this time without paying attention to constants) after observing that Proposition \ref{prop:weights} (g) yields
$$
w \in A_{\frac{p_0}{p_-}} \cap RH_{\big(\frac{p_+}{p_0}\big)'}\qquad\Longleftrightarrow\qquad
w^{(\frac{p_+}{p_0})'}\in A_{\big(\frac{p_+}{p_0}\big)'\,\big(\frac{p_0}{p_-}-1\big)+1}.
$$

Let $A$ be an $n\times n$ matrix of complex and
$L^\infty$-valued coefficients defined on $\mathbb{R}^n$. We assume that
this matrix satisfies the following ellipticity (or \lq\lq
accretivity\rq\rq) condition: there exist
$0<\lambda\le\Lambda<\infty$ such that
$$
\lambda\,|\xi|^2
\le
\Re A(x)\,\xi\cdot\bar{\xi}
\quad\qquad\mbox{and}\qquad\quad
|A(x)\,\xi\cdot \bar{\zeta}|
\le
\Lambda\,|\xi|\,|\zeta|,
$$
for all $\xi,\zeta\in\mathbb{C}^n$ and almost every $x\in \mathbb{R}^n$. We have used the notation
$\xi\cdot\zeta=\xi_1\,\zeta_1+\cdots+\xi_n\,\zeta_n$ and therefore
$\xi\cdot\bar{\zeta}$ is the usual inner product in $\mathbb{C}^n$. Note
that then
$A(x)\,\xi\cdot\bar{\zeta}=\sum_{j,k}a_{j,k}(x)\,\xi_k\,\bar{\zeta_j}$.
Associated with this matrix we define the second order divergence
form operator
$$
L f
=
-{\rm div}(A\,\nabla f),
$$
which is understood in the standard weak sense as a maximal-accretive operator on the space $L^2(\mathbb{R}^n,dx)$ with domain $D(L)$ by means of a
sesquilinear form. Associated to this operator we can consider a functional calculus $\varphi(L)$ where $\varphi$ is holomorphic and bounded in some sector, the Riesz transform $\nabla L^{-1/2}$, and some square functions. The $L^p$ theory for these operators was developed in the monograph \cite{Aus}. The weighted norm inequalities were obtained in \cite{AM3} using a generalized Calder\'on-Zygmund theory from \cite{AM1}. As part of these results it was obtained that the commutators of these operators with $ \BMO$ functions also satisfy  weighted norm inequalities. It should be pointed out that the proof of the weighted norm inequalities for the associated commutators followed from the developed Calder\'on-Zygmund theory with a somehow technical adaptation of the proof of the weighted norm inequalities for the corresponding operators. As we are going to see next, a simple application of Corollary \ref{corol:restricted} gives the same estimates without any extra effort.
In order to apply Corollary \ref{corol:restricted} we just need to recall that the following weighted norm inequalities hold for $\varphi(L)$ and $\nabla L^{-1/2}$ (see \cite{AM3}):
$$
\|\varphi(L)f\|_{L^p(w)}
\le
C_w\,\|f\|_{L^p(w)},
\qquad
\forall\,  p_-<p<p_+,\ w \in A_{\frac{p}{p_-}} \cap RH_{(\frac{p_+}{p})'};
$$
and
$$
\|\nabla L^{-1/2}f\|_{L^p(w)}
\le
C_w\,\|f\|_{L^p(w)},
\qquad
\forall\,  q_-<p<q_+,\ w \in A_{\frac{p}{q_-}} \cap RH_{(\frac{q_+}{p})'};
$$
where $(p_,p_+)$ and $(q_-,q_+)$ are respectively the maximal intervals where the semigroup $\{e^{-t\,L}\}_{t>0}$ and its gradient $\{\sqrt{t}\,\nabla e^{-t\,L}\}_{t>0}$ are uniformly bounded on $L^p(\mathbb{R}^n)$. This clearly allows us to apply Corollary \ref{corol:restricted} and obtain the corresponding commutators results as desired. Similar results can be obtained for the square functions associated with $L$ (see \cite{AM3} for full details and references).

\subsection{Fractional operators associated with the Kato conjecture}
Very much as before we can consider the fractional operators $L^{-\alpha/2}$. The weighted norm inequalities for these were proved in \cite{AM-fract}. Using the same notation as before, if $p_-<p<q<p_+$ and $1/p - 1/q = \alpha / n$, then
$$
\|L^{-\alpha/2}f\|_{L^q (w^q)}
\leq
C_w
\|f\|_{L^p (w^p)},
\qquad
\forall\, w\in A_{1+\frac1{p_-}-\frac1p}\cap RH_{q\,\big(\frac{q_+}{q}\big)'}
.
$$
Note that Proposition \ref{prop:weights} (g) gives that the previous estimates hold for a class of weights that can be written as $w^\theta\in A_s$ for some $\theta>1$ and $s>1$. Then, we can apply Theorem \ref{thm:main:II} and conclude that, under the same hypotheses, for every $b\in \BMO$ and $k\ge 1$
$$
\|(L^{-\alpha/2})_b^kf\|_{L^q (w^q)}
\leq
C_w\,\|b\|_{ \BMO}^k
\|f\|_{L^p (w^p)},
\qquad
\forall\, w\in A_{1+\frac1{p_-}-\frac1p}\cap RH_{q\,\big(\frac{q_+}{q}\big)'}
.
$$
These estimates were proved in \cite{AM-fract} using a Calder\'on-Zygmund type result that allows one to extend the unweighted estimates for the fractional operators to the commutators. The present method immediately produces the same results once the weighted estimates have been obtained.

\bigskip

We continue our applications by considering the case of bilinear operators. Several of the statements can be generalized to $m$-linear operators, $m\geq 2$, an easy task that is left to the interested reader.

\subsection{Bilinear Calder\'on-Zygmund operators}
We say that a bilinear operator $T$ a priori defined from $\mathcal S\times\mathcal S$ into $\mathcal S'$ of the form
$$
T(f, g)(x)=\int_{\mathbb R^n}\int_{\mathbb R^n} K(x, y, z)f(y)g(z)\,dydz
$$
is a bilinear Calder\'on-Zygmund operator if it can be extended as a bounded operator from $L^{p_1}\times L^{p_2}$ to $L^p$ for all $1<p_1, p_2<\infty$ with $1/p_1+1/p_2=1/p$, and its distributional kernel $K$ coincides, away from the diagonal $\{(x,y,z) \in \mathbb R^{3n}: x=y=z \}$, with a function  $K(x,y,z)$ locally integrable which satisfies estimates of the form
$$|\partial^\alpha K(x,y,z)| \lesssim \big(|x-y| + |x-z| + |y-z|\big)^{-2n-|\alpha|}, |\alpha|\le 1.$$
The estimates on $K$ above are not the most general that one can impose in such theory, see \cite{GT}. An immediate consequence of Theorem \ref{thm:multilinear:main:II} and the known weighted boundedness of the bilinear Calder\'on-Zygmund operators \cite{Lerner2009} leads to the following result.  The bounds are sharp for the operator $T$ ($|\alpha|=0$), see \cite{LiMoenSun2014}.

\begin{theorem}
\label{thm:multilinear:CZ}
Let $T$ be a bilinear Calder\'on-Zygmund operator, $1<p_1,p_2,p<\infty$ be such that $\frac1p=\frac{1}{p_1}+\frac{1}{p_2}$, $\textbf{b}=(b_1, b_2)\in \BMO^2$, $\textbf{w}=(w_1, w_2)\in A_{\textbf P}$ and $\alpha$ a multi-index. Then, we have
$$\|[T, \textbf{b}]_\alpha \textbf{f}\|_{L^p (\nu_{\textbf{w}})} \lesssim \alpha ! [\textbf{w}]_{A_{\textbf P}}^{(|\alpha|+1)\max\{1, p_1'/p, p_2'/p\}}\prod_{j=1}^2 \|b_j\|^{\alpha_j}_{\BMO} \|f_j\|_{L^{p_j}\left(w_j\right)}.$$
\end{theorem}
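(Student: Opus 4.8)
The plan is to read Theorem~\ref{thm:multilinear:CZ} off from Theorem~\ref{thm:multilinear:main:II} in its bilinear form ($m=2$, which is the case actually proved there), feeding in the known weighted theory for bilinear Calder\'on--Zygmund operators as the hypothesis. By \cite{Lerner2009} any such $T$ is bounded from $L^{p_1}(w_1)\times L^{p_2}(w_2)$ into $L^p(\nu_{\bw})$ for every $\bw\in A_{\textbf{P}}$, and the sharp dependence on the weight constant, due to \cite{LiMoenSun2014} (see also \cite{DLP2015}), is
\[
\|T\textbf{f}\|_{L^p(\nu_{\bw})}\ \lesssim\ [\bw]_{A_{\textbf{P}}}^{\,\beta}\prod_{j=1}^{2}\|f_j\|_{L^{p_j}(w_j)},\qquad \beta:=\max\Big\{1,\tfrac{p_1'}{p},\tfrac{p_2'}{p}\Big\}.
\]
Since $BMO=\BMO$ on $\R^n$, this is exactly the hypothesis \eqref{vector-weight} of Theorem~\ref{thm:multilinear:main:II} with the increasing function $\phi(t)=C\,t^{\beta}$; hence that theorem applies and immediately yields the boundedness of $[T,\textbf{b}]_\alpha$ on $L^p(\nu_{\bw})$ together with the factor $\alpha!\prod_{j}\|b_j\|_{\BMO}^{\alpha_j}$.

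It remains to pin down the weight exponent. Feeding $\phi(t)=C\,t^{\beta}$ into \eqref{multi-commutator-II}, and using that $c_{\textbf{P}}$ depends only on $\textbf{P}$ and $[\bw]_{A_{\textbf{P}}}\ge 1$, one gets a bound with exponent $\beta+|\alpha|\max\{p,p_1',p_2'\}$; to obtain the clean value $(|\alpha|+1)\beta$ claimed in the statement one re-runs the polydisc Cauchy-integral argument from the proof of Theorem~\ref{thm:multilinear:main:II} but chooses the perturbation radii $\delta_j$ by tracking the bilinear structure directly. Concretely, with $\widetilde{w}_j=w_j e^{-\Re(z_j)p_j b_j}$ one needs $\widetilde{\bw}\in A_{\textbf{P}}$ with $[\widetilde{\bw}]_{A_{\textbf{P}}}\lesssim[\bw]_{A_{\textbf{P}}}$, and this is analyzed through the equivalence $\bw\in A_{\textbf{P}}\Longleftrightarrow \nu_{\bw}\in A_{2p},\ \sigma_j:=w_j^{1-p_j'}\in A_{2p_j'}$, the bounds $[\nu_{\bw}]_{A_{2p}}\le[\bw]_{A_{\textbf{P}}}^{p}$ and $[\sigma_j]_{A_{2p_j'}}\le[\bw]_{A_{\textbf{P}}}^{p_j'}$ (\cite[Lemma~3.1]{DLP2015}), H\"older's inequality, and the sharp reverse H\"older inequality, Lemma~\ref{RH-sharp}. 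Optimizing the radii $\delta_j$ here --- so that the loss from the $\delta_j^{-\alpha_j}$ factors combines with $\phi(c_{\textbf{P}}[\bw]_{A_{\textbf{P}}})$ to give no worse than $[\bw]_{A_{\textbf{P}}}^{(|\alpha|+1)\beta}$, and where the defining relation $\tfrac1p=\tfrac1{p_1}+\tfrac1{p_2}$ enters --- is the step I expect to be the main obstacle.

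Everything else is routine and already packaged inside Theorem~\ref{thm:multilinear:main:II}: the Cauchy integral formula in several complex variables \eqref{multicauchy}, Minkowski's inequality over the distinguished boundary $b_0P(0,\vec{\delta})$, the conjugation of $T$ by the exponentials $e^{z_j b_j}$, and the emergence of the $\alpha!$ and of the polynomial-in-$[\bw]_{A_{\textbf{P}}}$ growth from the poles $z_j^{-(\alpha_j+1)}$ and the radii $\delta_j$. In particular no new harmonic-analytic ingredient beyond the quoted sharp weighted estimate for $T$ itself is needed; the proof is a matter of careful accounting of the constants.
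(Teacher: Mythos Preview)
Your first paragraph is exactly the paper's argument: the proof given in the paper is the single sentence preceding the statement, namely that the result is ``an immediate consequence of Theorem~\ref{thm:multilinear:main:II} and the known weighted boundedness of bilinear Calder\'on--Zygmund operators,'' with the sharp $\phi(t)=Ct^{\beta}$, $\beta=\max\{1,p_1'/p,p_2'/p\}$, supplied by \cite{LiMoenSun2014}. No further work is carried out.

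The exponent discrepancy you detect is real, and your diagnosis is accurate: plugging $\phi(t)=Ct^{\beta}$ into \eqref{multi-commutator-II} yields the exponent $\beta+|\alpha|\max\{p,p_1',p_2'\}=\beta\,(1+|\alpha|\,p)$, not $(|\alpha|+1)\beta$. The paper does \emph{not} re-run the Cauchy integral argument or optimize the radii $\delta_j$ to close this gap; it simply declares the result an immediate consequence. So the ``main obstacle'' you anticipate is not present in the paper's proof---the stated exponent $(|\alpha|+1)\max\{1,p_1'/p,p_2'/p\}$ appears to be a slip (possibly a conflation with the alternate normalization of $[\bw]_{A_{\mathbf P}}$ used in some of the cited literature, where the $A_{\mathbf P}$ constant is the $p$-th power of the one in Definition~\ref{vector-class}). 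You should not spend effort trying to recover it; the proof of Theorem~\ref{thm:multilinear:main:II} chooses $\delta_j\sim[\bw]_{A_{\mathbf P}}^{-\max\{p,p_1',p_2'\}}$ because that is what the sharp reverse H\"older exponent forces via the bounds $[\nu_{\bw}]_{A_{2p}}\le[\bw]_{A_{\mathbf P}}^{p}$ and $[\sigma_j]_{A_{2p_j'}}\le[\bw]_{A_{\mathbf P}}^{p_j'}$, and there is no room to improve this within that scheme.
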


The quantitative estimates in the above theorem are new. The limitations in the value of $p$ in our methods come from the use of Minkowski's inequality. However, for (non-quantitative) boundedness results the range of exponents can be extended so that $1/2<p<\infty$. See for example \cite{Lerner2009} for $|\alpha| =1$ and \cite{Perez2011} for $|\alpha| =2$. Alternatively, from Theorem \ref{thm:multilinear:CZ} the full range of exponents could be obtained by extrapolation (although not with optimal constants); see the recent work \cite[Corollary 1.5]{LMO}.

\subsection{Bilinear fractional integrals}
Consider now bilinear operators with positive kernels of the form
$$BI_s (f, g)(x)=\int_{\mathbb R^n}\int_{\mathbb R^n}\frac{1}{(|x-y|+|x-z|)^{2n-s}}f(y)g(z)\,dydz,$$
with $0<s<2n$. The appropriate class of vector weights to study these operators is defined as follows. For $1<p_1, p_2<\infty$ with $1/p_1+1/p_2>s/n$, let again $\textbf{P}=(p_1, p_2)$ and $q>0$ be such that
$$\frac{1}{q}=\frac{1}{p_1}+\frac{1}{p_2}-\frac{s}{n}.$$ The vector weight $\textbf{w}=(w_1, w_2)$ is said to belong to the class $A_{{\textbf P},q}$ if
$$[\textbf w]_{A_{{\textbf P},q}}:=\sup_{Q}\left(\fint_Q w_1^qw_2^q dx\right)\left(\fint_Q w_1^{-p_1'}dx\right)^{q/p_1'}\left(\fint_Q w_2^{-p_2'}dx\right)^{q/p_2'}<\infty.$$
It was shown by Moen \cite{Moen2009} and Chen and Xue \cite{CX2010} that $BI_\alpha: L^{p_1}(w_1^{p_1})\times L^{p_2}(w_2^{p_2})\to L^q(w_1^qw_2^q)$ as long as $\textbf w\in A_{{\textbf P},q}$. The dependence on the weighted constant was shown in \cite{LiMoenSun2015}.  Using this fact and the $A_{{\textbf P},q}$ version of our multilinear result, Theorem \ref{thm:multilinear:main:II}, we immediately recover the boundedness of the commutators of $BI_s$ with $\textbf{b}\in \BMO^2$ from the work in \cite{CX2010}. Specifically, for $\alpha $ a multi-index, we have
$$[BI_s, \textbf{b}]_{\alpha}: L^{p_1}(w_1^{p_1})\times L^{p_2}(w_2^{p_2})\to L^q(w_1^qw_2^q),$$
with appropriate quantitative estimates on the commutator operator norms similar to the ones stated in Corollary \ref{linear-fractional}.

\subsection{The bilinear Hilbert transform}
The bilinear Hilbert transform, defined via
$$BHT(f, g)(x)=\text{p.v}\,\int_{\mathbb R}f(x-t)g(x+t)\frac{dt}{t},$$
is a celebrity in harmonic analysis. It is a bilinear operator whose multiplier, unlike the ones for bilinear Calder\'on-Zygmund operators which are singular only at the origin, is singular along a line when viewed in the frequency plane. Let $1<p_1, p_2, p<\infty$ be such that $1/p_1+1/p_2=1/p$ and $\textbf{w}=(w_1, w_2)$ such that $w_1^{2p_1}\in A_{p_1}$ and $w_2^{2p_2}\in A_{p_2}$. Under these conditions, Culiuc, Di Plinio and Ou \cite{CPO2016} proved that
$$BHT: L^{p_1}(w_1^{p_1})\times L^{p_2}(w_2^{p_2})\to L^p(w_1^pw_2^p).$$
In view of Corollary \ref{cor:multilinear:II}, we immediately get the following new boundedness result for the commutator of $BHT.$

\begin{theorem}\label{thm:BHT:commutator}
Let $\textbf{b}\in \BMO^2$, $\textbf{w}=(w_1, w_2)$ be such that $w_j^2\in A_{p_j}$, where $1<p_j<\infty$, $j=1, 2$ and $1<p<\infty$ be so that $1/p=1/p_1+1/p_2$. Then, for any multi-index $\alpha$ we have
$$[BHT, \textbf{b}]_{\alpha}: L^{p_1}(w_1)\times L^{p_2}(w_2)\to L^p(\nu_{\textbf w}).$$
\end{theorem}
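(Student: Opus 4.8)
The plan is to obtain Theorem \ref{thm:BHT:commutator} as an immediate consequence of Corollary \ref{cor:multilinear:II}, applied to the bilinear operator $T = BHT$ with $m = 2$ and $\theta = 2$. All the analytic content---namely the weighted $L^p$ bounds for the bilinear Hilbert transform---will be imported from the theorem of Culiuc, Di Plinio and Ou \cite{CPO2016}; the passage to the commutators $[BHT, \textbf{b}]_\alpha$ is then purely the soft machinery of Section \ref{multilinear}. The one thing to check carefully is that the CPO estimate, as stated, is exactly the hypothesis \eqref{product-weight:v2} of Corollary \ref{cor:multilinear:II} after a harmless renormalization of the weights.

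First I would recast the CPO bound. In \cite{CPO2016} the weights enter the domain spaces through their $p_j$-th powers: $BHT \colon L^{p_1}(v_1^{p_1}) \times L^{p_2}(v_2^{p_2}) \to L^p(v_1^p v_2^p)$ whenever $v_1^{2p_1} \in A_{p_1}$ and $v_2^{2p_2} \in A_{p_2}$. Setting $w_j := v_j^{p_j}$ (so $v_j = w_j^{1/p_j}$), the domain space $j$ becomes $L^{p_j}(w_j)$, the target weight becomes $v_1^p v_2^p = w_1^{p/p_1} w_2^{p/p_2} = \nu_{\textbf{w}}$ in the notation of Definition \ref{vector-class}, and the weight hypothesis becomes $w_j^2 \in A_{p_j}$. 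Hence the CPO result reads: for every $\textbf{w} = (w_1, w_2)$ with $w_j^2 \in A_{p_j}$,
$$
\|BHT(\textbf{f})\|_{L^p(\nu_{\textbf{w}})} \lesssim \|f_1\|_{L^{p_1}(w_1)}\,\|f_2\|_{L^{p_2}(w_2)},
$$
which is precisely \eqref{product-weight:v2} with $\theta = 2$.

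With this in hand, Corollary \ref{cor:multilinear:II} applies verbatim and yields, for every $\textbf{b} = (b_1,b_2) \in \BMO^2$, every multi-index $\alpha$, and every vector weight $\textbf{w}$ with $w_j^2 \in A_{p_j}$,
$$
\|[BHT, \textbf{b}]_\alpha \textbf{f}\|_{L^p(\nu_{\textbf{w}})} \lesssim \prod_{j=1}^2 \|b_j\|_{\BMO}^{\alpha_j}\,\|f_j\|_{L^{p_j}(w_j)},
$$
which is the asserted boundedness. Note that the self-improvement needed to run the Cauchy integral trick---passing from $w_j^\theta \in A_{p_j}$ to $w_j^{\theta\eta_j} \in A_{p_j}$ for some $\eta_j > 1$---is already built into Corollary \ref{cor:multilinear:II} through the reverse H\"older inequality in the Euclidean setting, so nothing extra is required. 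Indeed, there is no real obstacle in this argument: the entire difficulty sits in the time-frequency analysis behind \cite{CPO2016}, and the only point demanding attention is the bookkeeping between the two normalizations of the weights---the ``power $p_j$'' convention of \cite{CPO2016} versus the convention in Definition \ref{vector-class}---where one must confirm that $w_j \mapsto w_j^{p_j}$ turns $v_1^p v_2^p$ into $\nu_{\textbf{w}}$ and $v_j^{2p_j} \in A_{p_j}$ into $w_j^2 \in A_{p_j}$.
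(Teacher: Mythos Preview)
Your proposal is correct and is exactly the paper's approach: the paper derives Theorem \ref{thm:BHT:commutator} as an immediate consequence of Corollary \ref{cor:multilinear:II} (with $m=2$, $\theta=2$) applied to the weighted CPO bound for $BHT$, after the same renormalization $w_j=v_j^{p_j}$ you describe. Your write-up is in fact more detailed than the paper's one-line justification, but the argument is identical.
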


Combining this result with the extrapolation method from \cite{CM-extrapol} we can obtain weighted estimates for the commutators for  $\frac23<p\le 1$.

\begin{corollary}\label{corol:BHT:commutator}
Let $\textbf{b}\in \BMO^2$. Given $1<p_1,\,p_2<\infty$, let $p>2/3$ be so that
$\frac{1}{p}=\frac{1}{p_1}+\frac{1}{p_2}$.  Then, for any multi-index $\alpha$ we have
\begin{equation} \label{eqn:bht1-bis}
[BHT, \textbf{b}]_{\alpha}: L^{p_1}(w_1)\times L^{p_2}(w_2)\to L^p(\nu_{\textbf w}),
\end{equation}
for all $\textbf{w}=(w_1, w_2)$ such that
$w_j\in  A_{\max\{1, p_j/2\}}\cap RH_{\max\{1, 2/p_j\}}$.

\noindent In particular,
\begin{equation}
[BHT, \textbf{b}]_{\alpha}: L^{p_1}(|x|^{-a}) \times L^{p_2}(|x|^{-a})
\longrightarrow L^p(|x|^{-a}),
\label{eq:BH-power}
\end{equation}
if $a=0$ or if
\begin{equation}\label{eq:values-a}
1-\min\left\{
\max\left\{1,\frac{p_1}{2}\right\},\max\left\{1,\frac{p_2}{2}\right\}\right\}
<
a
<\min\left\{1, \frac{p_1}{2},\frac{p_2}{2}\right\}.
\end{equation}
As a result, \eqref{eq:BH-power} holds for all $a\in [0, 1/2)$.
\end{corollary}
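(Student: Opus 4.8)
The plan is to obtain Corollary~\ref{corol:BHT:commutator} by feeding Theorem~\ref{thm:BHT:commutator} into the multilinear limited-range extrapolation theorem of Cruz-Uribe and Martell~\cite{CM-extrapol}, which was designed precisely around the weighted theory of the bilinear Hilbert transform. The first point to notice is that, once $\textbf{b}=(b_1,b_2)\in\BMO^2$ and a multi-index $\alpha$ are fixed, $[BHT,\textbf{b}]_\alpha$ is again a bilinear operator; hence it is legitimate to apply extrapolation to it (more precisely, to the family of triples $\bigl(|[BHT,\textbf{b}]_\alpha(f_1,f_2)|,(|f_1|,|f_2|)\bigr)$). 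Theorem~\ref{thm:BHT:commutator} supplies the initial weighted estimate: $[BHT,\textbf{b}]_\alpha: L^{p_1}(w_1)\times L^{p_2}(w_2)\to L^p(\nu_{\textbf{w}})$ whenever $1<p_1,p_2<\infty$, $\frac1p=\frac1{p_1}+\frac1{p_2}$ with $p>1$, and $w_j^2\in A_{p_j}$.

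To run the extrapolation I would first rewrite the weight hypothesis in the $A\cap RH$ language of that theory: by Proposition~\ref{prop:weights}$(g)$, $w_j^2\in A_{p_j}$ is the same as $w_j\in A_{(p_j+1)/2}\cap RH_2$. Moreover $BHT$ enjoys a sparse domination with $L^2$-type averages on its input entries, and this is what is reflected, after extrapolation, in the numerology $\max\{1,p_j/2\}$ and $\max\{1,2/p_j\}$ of the target weight class. Invoking the theorem of~\cite{CM-extrapol}---in the version valid for target exponents possibly below $1$, which is exactly what allows one to cross $p=1$---then produces~\eqref{eqn:bht1-bis}: $[BHT,\textbf{b}]_\alpha: L^{p_1}(w_1)\times L^{p_2}(w_2)\to L^p(\nu_{\textbf{w}})$ for all $1<p_1,p_2<\infty$ with $\frac1p=\frac1{p_1}+\frac1{p_2}>\frac23$ and all $\textbf{w}=(w_1,w_2)$ with $w_j\in A_{\max\{1,p_j/2\}}\cap RH_{\max\{1,2/p_j\}}$.

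The power-weight consequences then follow by a direct computation, using that $BHT$ acts on functions of one variable. With $w_1=w_2=|x|^{-a}$ one has $\nu_{\textbf{w}}=|x|^{-a(p/p_1+p/p_2)}=|x|^{-a}$, since $p/p_1+p/p_2=1$; hence~\eqref{eqn:bht1-bis} reduces to~\eqref{eq:BH-power} provided $|x|^{-a}\in A_{\max\{1,p_j/2\}}\cap RH_{\max\{1,2/p_j\}}$ for $j=1,2$. The elementary criteria on $\mathbb{R}$---namely $|x|^{b}\in A_t\Leftrightarrow -1<b<t-1$ for $1<t<\infty$, $|x|^{b}\in A_1\Leftrightarrow -1<b\le 0$, and $|x|^{b}\in RH_{s}\Leftrightarrow b>-1/s$---turn this membership, apart from the trivial value $a=0$ (where $w_j\equiv1$), into the two-sided condition $1-\max\{1,p_j/2\}<a<\min\{1,p_j/2\}$ for $j=1,2$; intersecting over $j=1,2$ gives exactly~\eqref{eq:values-a}. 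Finally, if $a\in[0,1/2)$ and $(p_1,p_2)$ is admissible, then $p_j>1>2a$ forces $\min\{1,p_j/2\}>a$, while $1-\max\{1,p_j/2\}\le 0\le a$; so~\eqref{eq:values-a} holds and~\eqref{eq:BH-power} is valid for every $a\in[0,1/2)$.

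The main obstacle is not the extrapolation, which is used as a black box, but the bookkeeping around the initial estimate. One must ensure that the initial weighted bound is available for the \emph{full} weight class that the extrapolation machine requires at its starting tuple; because of the $\eta$-loss inherent in the commutator estimates of Section~\ref{multilinear}, this may force one back to the sharp sparse-form weighted bounds for $BHT$ from~\cite{CPO2016} together with the multilinear $A_{\textbf{P},\vec r}$ theory, rather than quoting Theorem~\ref{thm:BHT:commutator} as it stands, and one has to track the renormalization $w_j\leftrightarrow w_j^{p_j}$ consistently throughout. Pinning down the precise form of the extrapolation theorem that yields the claimed range $2/3<p\le1$ (and handles $L^p$ with $p<1$) is the other delicate ingredient.
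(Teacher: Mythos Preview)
Your proposal is correct and follows exactly the route the paper takes: start from Theorem~\ref{thm:BHT:commutator} as the initial weighted estimate and feed it into the limited-range multilinear extrapolation of \cite{CM-extrapol} (the paper simply says one repeats the proof of \cite[Corollary~1.23]{CM-extrapol} with $BHT$ replaced by $[BHT,\textbf{b}]_\alpha$), and the power-weight computation you sketch is likewise the standard one carried out in that reference.

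Your closing worry about the ``$\eta$-loss'' is unnecessary. In the Euclidean setting the reverse H\"older property absorbs that loss: this is precisely the passage from Theorem~\ref{thm:multilinear:main:I} to Corollary~\ref{cor:multilinear:II}, so Theorem~\ref{thm:BHT:commutator} already delivers the commutator bound on the \emph{full} class $w_j^2\in A_{p_j}$, which is exactly the starting hypothesis the extrapolation machine of \cite{CM-extrapol} needs. There is no need to fall back on sparse forms or the $A_{\textbf{P},\textbf{R}}$ theory for this corollary.
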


We would like to emphasize that \eqref{eq:BH-power} with $a=0$ gives that $[BHT, \textbf{b}]_{\alpha}$ satisfies the same unweighted estimates as $BHT$.  The proof of Corollary \ref{corol:BHT:commutator} follows that of \cite[Corollary 1.23]{CM-extrapol}
by simply replacing $BHT$ by $[BHT, \textbf{b}]_{\alpha}$. The bottom line is that Theorem \ref{thm:BHT:commutator} provides the initial weighted norm inequalities needed to apply the extrapolation method and, in turn, these are the same for $BHT$ and $[BHT, \textbf{b}]_{\alpha}$. The same idea allows us to get a version of \cite[Theorem 1.18]{CM-extrapol} valid for $[BHT, \textbf{b}]_{\alpha}$, and also some vector-valued inequalities. Further details and the precise statements are left to the interested reader.

In Appendix \ref{appendix} we also state some additional weighted estimates for the commutators of the BHT with weights in some classes generalizing $A_{\textbf P}$, see Corollary \ref{corolo:BHT-new}.

\subsection{Multi-parameter operators}
An interesting application of our general result in a general setting is the boundedness of multi-parameter commutators of Calder\'on-Zygmund operators of product type. Moreover, we shall discover that if we consider the basis of rectangles $\mathcal R$, then the two flavors of spaces $BMO_{\mathcal R}$ and $\BMO_{\mathcal R}$ coincide.

For simplicity, we suppose that our underlying space is $\R^2=\R \times \R$, and we let $\mathcal R$ denote the family of rectangles with sides parallel to the axes. As explained in Subsection \ref{section:Comm-general}, we can consider the following two versions of spaces: $\BMO_{\mathcal R}$, defined via the norm
$$\|f\|_{\BMO_{\mathcal R}}
=\sup_{R \in \mathcal R} \|f-f_R\|_{{\rm exp}\,L,R},$$
and $BMO_{\mathcal R}$, defined via the norm
$$\|f\|_{BMO_{\mathcal R}}=\sup_{R \in \mathcal R} \fint_R |f-f_R| \, dx.$$
The space $BMO_{\mathcal R}$ is sometimes called ``little BMO"--since it is smaller than the $BMO(\R^2)$ space defined on cubes--and denoted $bmo$. Now, since
$$\|f\|_{BMO_{\mathcal R}}\leq \|f\|_{\BMO_{\mathcal R}},$$
we immediately obtain that $\BMO_{\mathcal R}\subset bmo$. It turns out that the converse inclusion
$$bmo\subset \BMO_{\mathcal R}$$
is also true, a ``little" fact previously unobserved to the best of our knowledge. In order to prove it, we need a few preliminary lemmas. In what follows, we will write $A_{2, \mathscr{B}}=A_{2, \mathscr{B}}(\R^2)$ to denote the class of $A_2$ weights defined with respect to the family $\mathscr{B}$, see again Proposition \ref{prop:weights:B}; and we will write $A_2=A_2(\R)$ for the usual $A_2$ class of weights, see Subsection \ref{subsection:Muckenhoupt}.

The lemma we first state below is proved in \cite[pp. 406-408]{RDF1985} for cubes, but the exact same proof actually holds for any family $\mathscr{B}$. Our interest will eventually be in applying these lemmas to the case where $\mathscr{B}=\mathcal R$.
\begin{lemma}
Let $f$ be a real valued locally integrable function. Then, $e^f$ is in $A_{2, \mathscr{B}}$ if and only if
$$
\sup_{B \in \mathscr{B}} \fint_Be^{|f-f_B|} \, dx =C_0<  \infty.
$$
Moreover, if $e^f \in A_2$, then $C_0 \leq 2 [e^f]_{A_{2, \mathscr{B}}}$.
\end{lemma}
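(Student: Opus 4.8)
The plan is to prove the two-sided equivalence between the exponential integrability condition and membership in $A_{2,\mathscr{B}}$, following the classical template for cubes but keeping track of the family $\mathscr{B}$ abstractly. Write $w = e^f$. For the ``only if'' direction, suppose $w \in A_{2,\mathscr{B}}$. For a fixed $B \in \mathscr{B}$, the idea is to split $B$ according to the sign of $f - f_B$: let $B^+ = \{x \in B : f(x) \geq f_B\}$ and $B^- = B \setminus B^+$. On $B^+$ we have $e^{|f - f_B|} = e^{f - f_B} = e^{-f_B}\,w$, and on $B^-$ we have $e^{|f-f_B|} = e^{f_B - f} = e^{f_B}\,w^{-1}$. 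Hence
\begin{equation*}
\fint_B e^{|f-f_B|}\,dx
\le
e^{-f_B}\fint_B w\,dx + e^{f_B}\fint_B w^{-1}\,dx.
\end{equation*}
The remaining point is to control $e^{f_B}$ and $e^{-f_B}$: by Jensen's inequality applied to the convex function $e^t$ (respectively $e^{-t}$), $e^{f_B} \le \fint_B e^f\,dx = \fint_B w\,dx$ and $e^{-f_B} \le \fint_B w^{-1}\,dx$. Substituting, the right-hand side is bounded by $2\,(\fint_B w\,dx)(\fint_B w^{-1}\,dx) \le 2\,[w]_{A_{2,\mathscr{B}}}$ (using that $w^{1-2'} = w^{-1}$ for $p=2$), which gives both $C_0 < \infty$ and the quantitative bound $C_0 \le 2[e^f]_{A_{2,\mathscr{B}}}$.

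For the ``if'' direction, assume $\sup_{B \in \mathscr{B}} \fint_B e^{|f-f_B|}\,dx = C_0 < \infty$. Fix $B \in \mathscr{B}$. We must estimate $(\fint_B w\,dx)(\fint_B w^{-1}\,dx)$. The key observation is that $\fint_B w\,dx = \fint_B e^{f}\,dx = e^{f_B}\fint_B e^{f - f_B}\,dx \le e^{f_B} C_0$, since $e^{f-f_B} \le e^{|f-f_B|}$; and similarly $\fint_B w^{-1}\,dx = e^{-f_B}\fint_B e^{f_B - f}\,dx \le e^{-f_B} C_0$. Multiplying these two inequalities, the exponential factors $e^{f_B}$ and $e^{-f_B}$ cancel and we obtain $(\fint_B w\,dx)(\fint_B w^{-1}\,dx) \le C_0^2$. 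Taking the supremum over $B \in \mathscr{B}$ shows $w \in A_{2,\mathscr{B}}$ with $[e^f]_{A_{2,\mathscr{B}}} \le C_0^2$.

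I expect no serious obstacle here; the argument is essentially bookkeeping with Jensen's inequality and the elementary splitting $|f - f_B| = (f - f_B)^+ + (f - f_B)^-$. The only mild subtlety is to make sure that all the averages involved are finite — since $f$ is assumed locally integrable and each $B \in \mathscr{B}$ has $0 < \mu(B) < \infty$, the quantity $f_B$ is well defined, and the exponential integrability hypothesis (in the ``if'' direction) guarantees $\fint_B e^{f}\,dx$ and $\fint_B e^{-f}\,dx$ are finite for each $B$; in the ``only if'' direction the $A_{2,\mathscr{B}}$ hypothesis already supplies this. One should also note that nothing in the argument uses any geometric feature of cubes or of $\mathbb{R}^n$ — only the measure-space structure and the definition of averages over sets of $\mathscr{B}$ — which is exactly why the proof in \cite[pp. 406--408]{RDF1985} transfers verbatim to a general basis $\mathscr{B}$, and in particular to $\mathscr{B} = \mathcal{R}$.
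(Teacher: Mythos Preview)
Your proof is correct and is precisely the classical argument from \cite[pp.~406--408]{RDF1985} that the paper cites; the paper does not give its own proof but simply observes that the argument in that reference, written for cubes, uses only averages over sets of the basis and hence carries over verbatim to any $\mathscr{B}$. Your write-up makes this explicit and supplies the quantitative bound $C_0\le 2[e^f]_{A_{2,\mathscr{B}}}$ exactly as stated.
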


\begin{lemma}
Let $f$ be a real valued locally integrable function.  If
$$
\sup_{B \in \mathscr{B}} \fint_B e^{|f-f_B|} \, dx = C_0<  \infty,
$$
then $f \in \BMO_{\mathscr{B}}$ and $\|f \|_{\BMO_{\mathscr{B}}} \leq \max \{1, \log_2 C_0\}$.
\end{lemma}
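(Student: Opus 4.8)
The plan is to check the pointwise-in-$B$ bound $\|f-f_B\|_{{\rm exp}\,L,B}\le \lambda_0$ with $\lambda_0:=\max\{1,\log_2 C_0\}$, and then to take the supremum over $B\in\mathscr{B}$. Unwinding the definition of the localized normalized $\exp L$-norm, this reduces, for each fixed $B$, to producing the single inequality
$$
\fint_B e^{|f-f_B|/\lambda_0}\,dx\le 2 .
$$

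First I would dispose of the trivial case $C_0\le 2$: there $\lambda_0=1$, and the hypothesis $\fint_B e^{|f-f_B|}\,dx\le C_0\le 2$ is precisely the required estimate. In the main case $C_0>2$ we have $\lambda_0=\log_2 C_0>1$, so the map $t\mapsto t^{1/\lambda_0}$ is concave on $[0,\infty)$, and Jensen's inequality on the probability space $(B,dx/|B|)$ gives
$$
\fint_B e^{|f-f_B|/\lambda_0}\,dx
=\fint_B\big(e^{|f-f_B|}\big)^{1/\lambda_0}\,dx
\le\Big(\fint_B e^{|f-f_B|}\,dx\Big)^{1/\lambda_0}
\le C_0^{1/\lambda_0}.
$$
The key elementary observation is then that $C_0^{1/\log_2 C_0}=2$ (take $\log_2$ of both sides), so the right-hand side is exactly $2$; this is the whole point of the choice of $\lambda_0$, since it makes Jensen's bound land precisely on the threshold $2$ appearing in the definition of $\|\cdot\|_{{\rm exp}\,L,B}$.

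Combining the two cases yields $\|f-f_B\|_{{\rm exp}\,L,B}\le\lambda_0$ for every $B\in\mathscr{B}$, and taking the supremum gives $\|f\|_{\BMO_{\mathscr{B}}}\le\max\{1,\log_2 C_0\}<\infty$, hence in particular $f\in\BMO_{\mathscr{B}}$. I do not expect any real obstacle: the argument is just one application of Jensen's inequality together with the calibration of the exponent. The only point requiring a moment's care is isolating the case $C_0\le 2$ separately, since for such $C_0$ one has $\log_2 C_0\le 1$ and the concavity of $t\mapsto t^{1/\lambda}$ — hence the Jensen step — is available only for $\lambda\ge 1$; the $\max$ in the statement is exactly what absorbs this.
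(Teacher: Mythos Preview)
Your proof is correct and follows essentially the same approach as the paper: split into the cases $C_0\le 2$ and $C_0>2$, and in the latter apply Jensen's inequality to the concave map $t\mapsto t^{1/\lambda}$ with $\lambda=\log_2 C_0$ to land exactly on the threshold $2$. Your write-up is in fact slightly more explicit than the paper's about why the case split is needed and why $C_0^{1/\log_2 C_0}=2$.
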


\begin{proof}
If $C_0\leq 2$, then clearly, by definition, $\|f\|_{\BMO_{\mathscr{B}}}\leq 1$. Assume thus that $C_0>2$ and let $\lambda\geq 1$. By Jensen's inequality, we have
$$
\fint_B e^{\frac{|f-f_B|}{\lambda}} \, dx \leq \left( \fint_B e^{{|f-f_B|}} \, dx\right)^{\frac{1}{\lambda}} = C_0^{\frac{1}{\lambda}} .
$$
Taking $\lambda = \log_2 C_0$ we obtain $\|f \|_{\BMO_{\mathcal \B}} \leq \log_2 C_0$.
\end{proof}
We point out that, if $f \in \BMO_{\mathscr{B}}$, then $\fint_B e^{|f-f_B| }\, dx$ may not be finite, but obviously
$$\fint_B e^{\frac{|f-f_B|}{\|f\|_{\BMO_{\mathscr{B}}}}} \, dx\leq 2.$$
We have the following immediate corollary of the previous two lemmas.

\begin{corollary}
Let $f$ be a real valued locally integrable function.  If $e^f \in A_{2, \mathscr{B}}$,
then $f \in \BMO_{\mathscr{B}}$ and
\begin{equation}\label{log2}
\|f \|_{\BMO_{\mathscr{B}}} \leq 1+\log_2 [e^f]_{A_{2, \mathscr{B}}}.
\end{equation}
\end{corollary}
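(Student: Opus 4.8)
The plan is to simply chain the two preceding lemmas, since the corollary is stated as an immediate consequence of them. First I would invoke the first of the two lemmas: the hypothesis $e^f \in A_{2,\mathscr{B}}$ guarantees that the quantity
\[
C_0 := \sup_{B \in \mathscr{B}} \fint_B e^{|f-f_B|}\,dx
\]
is finite, and moreover that $C_0 \leq 2\,[e^f]_{A_{2,\mathscr{B}}}$. Next I would feed this into the second lemma, whose hypothesis is exactly the finiteness of $C_0$; it yields $f \in \BMO_{\mathscr{B}}$ together with the bound $\|f\|_{\BMO_{\mathscr{B}}} \leq \max\{1,\log_2 C_0\}$.

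It then remains to combine the two estimates. Since $\log_2$ is increasing, $C_0 \leq 2\,[e^f]_{A_{2,\mathscr{B}}}$ gives $\log_2 C_0 \leq 1 + \log_2 [e^f]_{A_{2,\mathscr{B}}}$, hence
\[
\|f\|_{\BMO_{\mathscr{B}}} \leq \max\bigl\{1,\ 1 + \log_2 [e^f]_{A_{2,\mathscr{B}}}\bigr\}.
\]
To collapse the maximum to its second argument I would observe that $[e^f]_{A_{2,\mathscr{B}}} \geq 1$ always holds: by Jensen's inequality applied to each $B \in \mathscr{B}$ in the definition of $A_{2,\mathscr{B}}$, one has $\left(\fint_B e^f\,dx\right)\left(\fint_B e^{-f}\,dx\right) \geq 1$, so the supremum defining $[e^f]_{A_{2,\mathscr{B}}}$ is at least $1$. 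Consequently $1 + \log_2 [e^f]_{A_{2,\mathscr{B}}} \geq 1$, the maximum equals that term, and \eqref{log2} follows.

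There is really no substantive obstacle here; the only point requiring a moment's care is the last one, namely checking that $[e^f]_{A_{2,\mathscr{B}}} \geq 1$ so that the $\max$ simplifies — this is the standard normalization fact for $A_p$ constants and needs only Jensen's inequality, valid for an arbitrary basis $\mathscr{B}$ since no structural assumption on $\mathscr{B}$ is used. Everything else is a direct substitution of one lemma into the other.
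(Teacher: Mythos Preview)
Your proposal is correct and is exactly the argument the paper intends: the corollary is stated there as ``an immediate corollary of the previous two lemmas,'' and you have simply spelled out the chaining of those lemmas together with the observation $[e^f]_{A_{2,\mathscr{B}}}\ge 1$ needed to drop the $\max$. There is nothing to add.
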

Conversely, we recall that as observed above \eqref{eqn:BMO->Ap:3} in Lemma \ref{lemma:BMO->Ap} is valid for arbitrary bases provided $p>1$. Thus, given $f \in \BMO_{\mathscr{B}}$, for all $\lambda \geq \|f\|_{\BMO_{\mathscr{B}}}$, we then have
\begin{equation} \label{4}
\left[e^{\frac{f}{\lambda}}\right]_{A_{2, \mathscr{B}}} \leq 4^{\frac{ \|f\|_{\BMO_{\mathscr{B}}} }{\lambda}}.
\end{equation}

\begin{lemma}
A weight $w(x, y)$ is in $A_{2, \mathcal R}(\R^2)$ if and only if $w(x,\cdot)$ and $w(\cdot,y)$ are uniformly in $A_2(\R)$ for almost every $x, y \in \R$.
\end{lemma}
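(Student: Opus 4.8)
The claim is a characterization of $A_{2,\mathcal R}(\R^2)$ weights in terms of one-dimensional $A_2(\R)$ conditions on the coordinate slices. The plan is to prove both directions by a direct unpacking of the relevant averaging inequalities, using the product structure of rectangles $R = I \times J$ with $I, J \subset \R$ intervals.

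First I would prove the easy direction: if $w \in A_{2,\mathcal R}(\R^2)$, then $w(x,\cdot) \in A_2(\R)$ uniformly in $x$, and symmetrically $w(\cdot,y) \in A_2(\R)$ uniformly in $y$. Fix an interval $J \subset \R$ and, for each interval $I$, apply the $A_{2,\mathcal R}$ condition to the rectangle $R = I \times J$:
$$
\left( \fint_I \fint_J w(s,t)\,dt\,ds \right)\left( \fint_I \fint_J w(s,t)^{-1}\,dt\,ds \right) \le [w]_{A_{2,\mathcal R}}.
$$
Shrinking $I$ to a point (via the Lebesgue differentiation theorem, applied to the $L^1_{\mathrm{loc}}(\R)$ functions $s \mapsto \fint_J w(s,t)\,dt$ and $s \mapsto \fint_J w(s,t)^{-1}\,dt$) shows that for a.e.\ $x \in \R$,
$$
\left( \fint_J w(x,t)\,dt \right)\left( \fint_J w(x,t)^{-1}\,dt \right) \le [w]_{A_{2,\mathcal R}},
$$
and taking the supremum over $J$ gives $[w(x,\cdot)]_{A_2(\R)} \le [w]_{A_{2,\mathcal R}}$; the roles of the two variables are symmetric. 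The one subtlety here is to justify that the null set of bad $x$ can be chosen independently of $J$, which is handled by first restricting to $J$ with rational endpoints and then using monotone/continuous dependence of the averages on $J$.

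For the converse — the direction I expect to be the main obstacle — suppose $w(x,\cdot), w(\cdot,y) \in A_2(\R)$ uniformly. Fix $R = I \times J$ and let $A := \sup_x [w(x,\cdot)]_{A_2(\R)}$, $B := \sup_y [w(\cdot,y)]_{A_2(\R)}$. The natural strategy is a Fubini/iteration argument. For fixed $s \in I$, the one-dimensional $A_2$ condition gives $\big(\fint_J w(s,t)\,dt\big)\big(\fint_J w(s,t)^{-1}\,dt\big) \le A$. Write $g(s) = \fint_J w(s,t)\,dt$ and $h(s) = \fint_J w(s,t)^{-1}\,dt$, so $g(s)h(s) \le A$ for a.e.\ $s$. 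We want to bound $\big(\fint_I g\big)\big(\fint_I h\big)$ — but this is \emph{not} directly controlled, because $g$ and $h$ individually need not satisfy any $A_2$-type bound from this alone; we must bring in the $A_2$ condition in the $x$-variable. The key observation is that for fixed $t$, $s \mapsto w(s,t)$ is in $A_2(\R)$ with constant $\le B$, hence (by Jensen applied to $\log$, or directly) $\fint_I w(s,t)\,ds \le B \big(\fint_I w(s,t)^{-1}\,ds\big)^{-1}$, i.e.\ $\big(\fint_I w(s,t)\,ds\big)\big(\fint_I w(s,t)^{-1}\,ds\big) \le B$. One then estimates
$$
\left(\fint_R w\right)\left(\fint_R w^{-1}\right)
= \left(\fint_I \!\fint_J w\,dt\,ds\right)\left(\fint_I \!\fint_J w^{-1}\,dt\,ds\right),
$$
and the cleanest route is to pass through the \emph{logarithmic} / $A_\infty$-type formulation: from $w(x,\cdot) \in A_2$ uniformly one gets $\fint_J \log w(x,t)\,dt - \log\big(\fint_J w(x,t)^{-1}\,dt\big)^{-1}$ controlled, etc. Honestly, the slickest proof invokes the previously recorded lemmas: write $w = e^f$ (reducing to $w$ of this form by the density/limiting remarks, or noting $f = \log w$ is real-valued a.e.\ since $w, w^{-1} \in L^1_{\mathrm{loc}}$). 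By the Corollary above (inequality \eqref{log2}) and the one-dimensional version of the lemma preceding it, $w(x,\cdot) \in A_2(\R)$ uniformly implies $\sup_x \|f(x,\cdot)\|_{BMO(\R)} < \infty$ and likewise $\sup_y \|f(\cdot,y)\|_{BMO(\R)} < \infty$; this says precisely $f \in bmo(\R^2)$. Then I would cite, or prove via the one-dimensional John--Nirenberg inequality applied successively in each variable, the standard fact that $f \in bmo$ implies $\sup_{R \in \mathcal R} \fint_R e^{|f - f_R|} < \infty$ — which by the first lemma above (with $\mathscr B = \mathcal R$) is equivalent to $e^f = w \in A_{2,\mathcal R}$.

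The genuine work, and the step I flag as the main obstacle, is this last implication: $f \in bmo(\R^2) \Longrightarrow \sup_{R} \fint_R e^{|f - f_R|}\,dx < \infty$, i.e.\ a two-parameter John--Nirenberg statement for $bmo$. I would prove it by iterating the one-dimensional John--Nirenberg inequality: for fixed $x$, $\fint_J e^{c|f(x,t) - f_J(x)|/\|f(x,\cdot)\|_{BMO}}\,dt \le 2$ for a dimensional constant, where $f_J(x) := \fint_J f(x,t)\,dt$; then view $x \mapsto f_J(x)$ and control $\fint_I e^{c'|f_J(x) - (f_J)_I|/\cdots}\,dx$ using that $f_J \in BMO(\R)$ uniformly in $J$ (a consequence of $f(\cdot,y) \in BMO$ uniformly, by averaging in $y$), and finally combine via $|f - f_R| \le |f(x,t) - f_J(x)| + |f_J(x) - (f_J)_I|$ and the Cauchy--Schwarz / product inequality $\fint_R e^{a+b} \le (\fint_R e^{2a})^{1/2}(\fint_R e^{2b})^{1/2}$. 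The bookkeeping of constants is routine once this scheme is in place, so I would present the scheme and leave the constant-chasing terse.
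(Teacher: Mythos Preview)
First, the paper does not prove this lemma: it cites Garc\'ia-Cuerva--Rubio de Francia and only records the quantitative bounds \eqref{weightsrelationship:1}--\eqref{weightsrelationship:2}. Your forward direction (Lebesgue differentiation, shrinking one side of the rectangle) is standard and correct.

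For the converse there is a genuine gap. The ``standard fact'' you invoke, that $f\in bmo$ implies $\sup_{R}\fint_R e^{|f-f_R|}<\infty$, is false as stated: replace $f$ by $\lambda f$ and let $\lambda\to\infty$. What your iterated one-variable John--Nirenberg argument actually yields (and the sketch for that part is correct) is $\sup_R\fint_R e^{c|f-f_R|/\|f\|_{bmo}}\le C$ for a dimensional $c$, i.e.\ $f\in\BMO_{\mathcal R}$. That is precisely Theorem~\ref{thm:bmo-BMO}, which in the paper is \emph{deduced from} the present lemma; so you have reversed the logic and proved that theorem, but not this lemma. Through the first preparatory lemma your conclusion is only $e^{cf/\|f\|_{bmo}}\in A_{2,\mathcal R}$, not $e^f=w$, and since here $\|f\|_{bmo}\approx 1+\log(AB)$ can be large, the detour $A_2\to BMO\to$ John--Nirenberg $\to A_2$ is genuinely lossy.

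The fix is to skip the $bmo$ detour and use the slice $A_2$ hypothesis at full strength. The first preparatory lemma (with $\mathscr B$ the intervals in $\R$) gives directly $\fint_J e^{|f(x,t)-f_J(x)|}\,dt\le 2A$ for every $x$, with exponent $1$, not $c/\|f\|$. Your iteration then runs without loss: from $|f-f_R|\le |f(x,t)-f_J(x)|+|f_J(x)-f_R|$ one gets
\[
\fint_R e^{|f-f_R|}\le 2A\,\fint_I e^{|f_J(x)-f_R|}\,dx,
\]
and since $f_J(x)-f_R=\fint_J\big(f(x,t)-f_I(t)\big)\,dt$, Jensen's inequality followed by the uniform $A_2$ bound in the first variable give $\fint_I e^{|f_J-f_R|}\,dx\le\fint_J\fint_I e^{|f(\cdot,t)-f_I(t)|}\,dx\,dt\le 2B$. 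Hence $\sup_R\fint_R e^{|f-f_R|}\le 4AB$ and $w=e^f\in A_{2,\mathcal R}$.
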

The proof of this result can be found in \cite[pp. 453-459]{RDF1985}. The arguments there show that 
\begin{equation}\label{weightsrelationship:1}
\essup_x[w(x,\cdot)]_{A_2}\le [w]_{A_{2, \mathcal R}},
\qquad
\essup_y[w(\cdot,y)]_{A_2}  \le [w]_{A_{2, \mathcal R}},
\end{equation}
and
\begin{equation}\label{weightsrelationship:2}
[w]_{A_{2, \mathcal R}}  \le C_n\,\essup_x[w(x,\cdot)]_{A_2}\, \essup_y[w(\cdot,y)]_{A_2},
\end{equation}
where $C_n$ is a dimensional constant.

A similar statement holds for $bmo$ functions. The following result was proved in \cite[pp. 279-281]{CS1996}.
\begin{lemma}
Let $f$ be a locally integrable function on $\R^2$.  Then, $f$ is in $bmo$ if and only if
$f(x, \cdot)$ and $f(\cdot, y)$ are uniformly in $BMO(\R)$ for almost every $x, y \in \R$.
Moreover,
\begin{equation}\label{bmorelationship}
\|f(x, \cdot)\|_{BMO} + \|f(\cdot, y)\|_{BMO} \approx \|f\|_{bmo}.
\end{equation}
\end{lemma}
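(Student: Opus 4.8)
The plan is to prove the two inclusions directly from the definition $\|f\|_{bmo}=\sup_{R\in\mathcal R}\fint_R|f-f_R|\,dx$, using only Fubini's theorem, the Lebesgue differentiation theorem, and the elementary estimate $\fint_I\fint_I|g(x)-g(x')|\,dx\,dx'\le 2\,\|g\|_{BMO(\R)}$, valid for $g\in L^1_{loc}(\R)$ and every bounded interval $I$ (which itself follows at once from the triangle inequality applied with the average of $g$ over $I$).

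For the \emph{sufficiency} direction, suppose $A_1:=\essup_x\|f(x,\cdot)\|_{BMO}$ and $A_2:=\essup_y\|f(\cdot,y)\|_{BMO}$ are finite. Fix a rectangle $R=I\times J$ and set $f_J(x)=\fint_J f(x,y)\,dy$, so that $f_R=\fint_I f_J(x)\,dx$. I would split
$$
f(x,y)-f_R=\big(f(x,y)-f_J(x)\big)+\big(f_J(x)-f_R\big);
$$
averaging the first summand over $R$ and using Fubini slice by slice in $x$ gives a contribution $\le A_1$, while for the second I would write $f_J(x)-f_R=\fint_I\big(f_J(x)-f_J(x')\big)\,dx'$, use $|f_J(x)-f_J(x')|\le\fint_J|f(x,y)-f(x',y)|\,dy$, interchange the order of integration, and bound the resulting $I\times I$ oscillation slice by slice in $y$ by $2A_2$. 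Altogether $\|f\|_{bmo}\le A_1+2A_2$.

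For the \emph{necessity} direction, assume $f\in bmo$, fix a bounded interval $J$, and put $F_J(x)=\fint_J|f(x,y)-f_J(x)|\,dy$. Since $\fint_J|f(x,y)-f_J(x)|\,dy\le 2\fint_J|f(x,y)-c|\,dy$ for any constant $c$, taking $c=f_{I\times J}$ and applying Fubini yields $\fint_I F_J(x)\,dx\le 2\fint_{I\times J}|f-f_{I\times J}|\,dx\,dy\le 2\|f\|_{bmo}$ for every interval $I$; hence $F_J\in L^1_{loc}(\R)$ and the Lebesgue differentiation theorem forces $F_J(x)\le 2\|f\|_{bmo}$ for a.e.\ $x$. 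To obtain a single exceptional null set independent of $J$, I would run the previous step only over the countable family of intervals with rational endpoints, also discard the null set on which $f(x,\cdot)\notin L^1_{loc}(\R)$, and note that off the resulting null set the map $J\mapsto\fint_J|f(x,y)-f_J(x)|\,dy$ is continuous in the endpoints of $J$, so the bound persists for all $J$; this gives $\|f(x,\cdot)\|_{BMO}\le 2\|f\|_{bmo}$ for a.e.\ $x$, and symmetrically for $f(\cdot,y)$. Combined with the sufficiency estimate this proves the equivalence and the two-sided comparison \eqref{bmorelationship}.

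The one genuinely delicate point is this last step: the Lebesgue differentiation argument produces, for \emph{each} interval $J$ separately, a null set off which the slice oscillation is controlled, and one must upgrade this to a single null set valid simultaneously for all $J$; the countable-dense-family-plus-continuity device is exactly what accomplishes that. Everything else is routine bookkeeping with Fubini's theorem and the triangle inequality.
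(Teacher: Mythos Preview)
Your argument is correct. Note, however, that the paper does not supply its own proof of this lemma: it simply cites \cite[pp.~279--281]{CS1996} (Cotlar--Sadosky). So there is no ``paper's proof'' to compare with in detail; what you have written is a self-contained substitute for that citation.

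A couple of brief remarks on your write-up. In the sufficiency direction your bookkeeping is clean and the constant $A_1+2A_2$ is exactly what the decomposition yields. In the necessity direction the key step---passing from $\fint_I F_J\le 2\|f\|_{bmo}$ for all $I$ to $F_J(x)\le 2\|f\|_{bmo}$ a.e.---is correctly justified via Lebesgue differentiation, and you are right to flag the dependence of the exceptional set on $J$ as the only delicate point. Your fix (restrict to rational-endpoint intervals, use that for a.e.\ fixed $x$ the map $J\mapsto \fint_J|f(x,y)-f_J(x)|\,dy$ is continuous in the endpoints whenever $f(x,\cdot)\in L^1_{loc}$) is standard and works; the continuity claim follows from absolute continuity of the integral together with $|f_{J_n}(x)-f_J(x)|\to 0$. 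You might also mention explicitly that $f(x,\cdot)\in L^1_{loc}(\R)$ for a.e.\ $x$ is a consequence of Fubini applied on a countable exhaustion of $\R^2$ by rectangles, since you invoke this in discarding a null set.
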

Finally, we are able to prove the following
\begin{theorem}\label{thm:bmo-BMO}
Let $f$ be a locally integrable function on $\R^2$.  Then, $f\in bmo$ if and only if $f\in \BMO_{\mathcal R}$.
\end{theorem}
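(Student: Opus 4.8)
The plan is to prove the two inclusions separately; one of them has in effect already been recorded above. Since the normalized $\exp L$-norm over a rectangle dominates the corresponding $L^1$-average, one has $\|f\|_{bmo}=\|f\|_{BMO_{\mathcal R}}\le\|f\|_{\BMO_{\mathcal R}}$, so $\BMO_{\mathcal R}\subseteq bmo$ with no additional work. The substance is therefore the reverse inclusion $bmo\subseteq\BMO_{\mathcal R}$, together with the quantitative bound $\|f\|_{\BMO_{\mathcal R}}\lesssim\|f\|_{bmo}$; the strategy is to route through $A_{2,\mathcal R}$-weights, exploiting the fact that $\BMO_{\mathcal R}$ is defined through exponential norms, so that the relevant equivalence with membership in $A_{2,\mathcal R}$ is at hand.

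Fix a real-valued $f\in bmo$ (the complex-valued case follows by splitting into real and imaginary parts). First I would use the lemma characterizing $bmo$ by slices, i.e.\ \eqref{bmorelationship}, followed by the one-dimensional John--Nirenberg inequality (so that $BMO(\R)=\BMO(\R)$), to produce the finite quantity
$$
\lambda:=\max\Big\{\essup_x\|f(x,\cdot)\|_{\BMO(\R)},\ \essup_y\|f(\cdot,y)\|_{\BMO(\R)}\Big\}\ \lesssim\ \|f\|_{bmo},
$$
where the one-dimensional norms are taken with respect to intervals. For a.e.\ $x$ and a.e.\ $y$ the slices $f(x,\cdot)$ and $f(\cdot,y)$ then lie in $\BMO(\R)$ with norm at most $\lambda$, so \eqref{4}, applied on $\R$ with $\mathscr B$ the family of intervals, gives
$$
\big[e^{f(x,\cdot)/\lambda}\big]_{A_2(\R)}\le 4
\qquad\text{and}\qquad
\big[e^{f(\cdot,y)/\lambda}\big]_{A_2(\R)}\le 4
$$
for a.e.\ $x$ and a.e.\ $y$. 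Since the horizontal and vertical slices of the weight $w=e^{f/\lambda}$ on $\R^2$ are precisely $e^{f(x,\cdot)/\lambda}$ and $e^{f(\cdot,y)/\lambda}$, the slice characterization of $A_{2,\mathcal R}$, via \eqref{weightsrelationship:2}, yields $[e^{f/\lambda}]_{A_{2,\mathcal R}}\le 16\,C_n$, so $e^{f/\lambda}\in A_{2,\mathcal R}(\R^2)$.

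It then remains to traverse the bridge backwards: applying the corollary above yielding \eqref{log2}, with $\mathscr B=\mathcal R$ and the function $f/\lambda$ in place of $f$, we obtain $f/\lambda\in\BMO_{\mathcal R}$ with $\|f/\lambda\|_{\BMO_{\mathcal R}}\le 1+\log_2(16\,C_n)$, and by homogeneity of the $\exp L$-norm this gives $\|f\|_{\BMO_{\mathcal R}}=\lambda\,\|f/\lambda\|_{\BMO_{\mathcal R}}\lesssim\|f\|_{bmo}$, which completes the proof.

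The step I expect to require the most care is purely a bookkeeping one: ensuring that the one-dimensional estimates hold with constants uniform in the slicing variable — which is why one passes to essential suprema and works with the single parameter $\lambda$ — and quoting \eqref{weightsrelationship:1}--\eqref{weightsrelationship:2} and \eqref{bmorelationship} in the precise quantitative forms used here; the measurability of $x\mapsto\|f(x,\cdot)\|_{BMO(\R)}$ and $y\mapsto\|f(\cdot,y)\|_{BMO(\R)}$, implicit in the definition of $\lambda$, is already handled in the sources cited for those lemmas. No harmonic-analytic ingredient beyond the classical one-dimensional John--Nirenberg inequality is needed: because $\BMO_{\mathcal R}$ is built with exponential norms, identifying it with $bmo$ reduces to the elementary equivalence between lying in $A_{2,\mathcal R}$ and the finiteness of an exponential-type average.
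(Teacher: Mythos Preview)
Your proposal is correct and follows essentially the same route as the paper's proof: pass from $bmo$ to uniform one-dimensional $\BMO$ bounds on slices via John--Nirenberg, use \eqref{4} to get uniform one-dimensional $A_2$ bounds on $e^{f/\lambda}$, combine them via \eqref{weightsrelationship:2} into an $A_{2,\mathcal R}$ bound, and then invoke \eqref{log2} to return to $\BMO_{\mathcal R}$. The only cosmetic difference is that you introduce the quantity $\lambda$ as a max of essential suprema whereas the paper works directly with a constant multiple of $\|f\|_{bmo}$; the constants and the chain of lemmas are otherwise identical.
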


\begin{proof}
It suffices to show that $\|f\|_{\BMO_{\mathcal R}}\lesssim \|f\|_{bmo}$. Let then $f\in bmo$. By \eqref{bmorelationship}
$$
\|f(x, \cdot)\|_{BMO(\R)} + \|f(\cdot, y)\|_{BMO(\R)} \lesssim \|f\|_{bmo(\R^2)}.
$$
By the John-Nirenberg inequality in $\R$ we have that, for some {\it fixed} $c>0$,
$$
\|f(x, \cdot)\|_{\BMO(\R)} + \|f(\cdot, y)\|_{\BMO(R)} \leq c \|f\|_{bmo(\R^2)}.
$$
By \eqref{4} applied in $\R$,
$$
\left[e^{\frac{f(x,\cdot)}{c \|f\|_{bmo}}}\right]_{A_2(\R)}, \,\,\,\left[e^{\frac{f(\cdot,y)}{c \|f\|_{bmo}}}\right]_{A_2(\R)} \leq 4,
$$
which combined with \eqref{weightsrelationship:2} gives
$$
\left[e^{\frac{f}{c \|f\|_{bmo}}}\right]_{A_{2, \mathcal R}} \leq  16 C_n.
$$
Now, by \eqref{log2} ,  $\frac{f}{c \|f\|_{bmo}} \in \BMO_{\mathcal R}$ and
$$
\Big|\Big|\frac{f}{c \|f\|_{bmo}}\Big|\Big|_{\BMO_{\mathcal R}} \leq  5+\log_2 C_n,
$$
and finally
$$
\|f\|_{\BMO_{\mathcal R}} \leq  (5+\log_2 C_n)\,c\, \|f\|_{bmo}.
$$
\end{proof}

The following result was obtained by Ferguson and Sadosky \cite[Theorem 2.1]{FS2000}  on the 2-dimen\-sio\-nal torus.  The version we state follows from Theorems \ref{thm:bmo-BMO} and \ref{thm:main:B}. The needed weighted estimates to use our approach in the 2-dimensional torus were obtained by Cotlar and Sadosky \cite{CS1996} and in the Euclidean setting are due to Fefferman and Stein \cite{FS1982}.
\begin{theorem}
Let $H_j$ denote the 1-dimensional Hilbert transforms in the $j$-th variable, $j=1, 2$ and let $b\in bmo$. Then, for all $f\in L^p(\R^2), 1<p<\infty$, we have
$$\|[b, H_1H_2]\|_{L^p(\R^2)}\lesssim \|b\|_{bmo}\|f\|_{L^p(\R^2)}.$$
\end{theorem}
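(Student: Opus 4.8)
The plan is to obtain this theorem as a direct consequence of Theorem~\ref{thm:main:B}, specialized to the basis $\mathscr B=\mathcal R$ of rectangles in $\mathbb R^2$, together with the identification $bmo=\BMO_{\mathcal R}$ furnished by Theorem~\ref{thm:bmo-BMO}. Since $[b,H_1H_2]=-[H_1H_2,b]$ and $T:=H_1H_2=H_2H_1$ is a bounded linear operator on $L^p(\mathbb R^2)$ for every $1<p<\infty$ (a composition of the two one-parameter Hilbert transforms, which act in different variables), it suffices to bound $[H_1H_2,b]$.

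First I would record the weighted input. For a fixed $1<p<\infty$, the double Hilbert transform is bounded on $L^p(u)$ for every weight $u$ in the product (strong) Muckenhoupt class $A_{p,\mathcal R}(\mathbb R^2)$, with a bound depending only on $[u]_{A_{p,\mathcal R}}$; this is the Fefferman--Stein weighted theory \cite{FS1982} (it also follows by iterating, via Fubini, the one-dimensional $A_p(\mathbb R)$ bound for $H$ in each variable together with the slice characterization of $A_{p,\mathcal R}$, and in the torus setting the estimates of Cotlar and Sadosky \cite{CS1996} serve the same purpose). Writing $u=w^p$, this says precisely that hypothesis \eqref{eqn:main:hyp:B} holds for $T=H_1H_2$ with the choices $q=p$, $\theta=p$, $s=p$ and $\mathscr B=\mathcal R$, for a suitable increasing function $\phi$.

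Then I would invoke Theorem~\ref{thm:main:B}. Given $b\in bmo$, Theorem~\ref{thm:bmo-BMO} gives $b\in\BMO_{\mathcal R}$ with $\|b\|_{\BMO_{\mathcal R}}\lesssim\|b\|_{bmo}$. Applying the case $k=1$ of Theorem~\ref{thm:main:B}, i.e.\ estimate \eqref{eqn:main:k=1:B}, with the weight $w\equiv 1$ --- which is admissible since $w^{p\,\eta}\equiv 1\in A_{p,\mathcal R}$ for, say, $\eta=2$ --- yields
\[
\big\|[H_1H_2,b]f\big\|_{L^p(\mathbb R^2)}
\lesssim \|b\|_{\BMO_{\mathcal R}}\,\|f\|_{L^p(\mathbb R^2)}
\lesssim \|b\|_{bmo}\,\|f\|_{L^p(\mathbb R^2)},
\]
which is the asserted inequality (with a constant depending on the fixed exponent $p$).

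As far as difficulty goes, there is essentially no obstacle once the abstract machinery is in place: the only genuine analytic ingredient is the product-weight boundedness of $H_1H_2$, which is classical, and the one nontrivial observation specific to this multiparameter setting --- the equivalence $bmo=\BMO_{\mathcal R}$ --- has already been established in Theorem~\ref{thm:bmo-BMO}. The remaining content is merely the choice of parameters $\theta=s=p$ and the specialization to $w\equiv 1$; in particular no \emph{ad hoc} commutator argument is needed.
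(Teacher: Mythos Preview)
Your proposal is correct and follows precisely the paper's own argument: the paper states that the result ``follows from Theorems~\ref{thm:bmo-BMO} and~\ref{thm:main:B}'' together with the product $A_p$ weighted bounds for $H_1H_2$ due to Fefferman--Stein \cite{FS1982} (Cotlar--Sadosky \cite{CS1996} on the torus), and you have simply made explicit the choice of parameters $q=p$, $\theta=s=p$, $\mathscr B=\mathcal R$, and the specialization $w\equiv 1$.
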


The results above about $bmo$ could be easily generalized to other product settings $\R^n \times \R^m$ and if $T$ is Calder\'on-Zygmund operator of product type in the sense of \cite{FS1982} then we obtain with similar arguments the following.
\begin{theorem}
Let $T$ be a Calder\'on-Zygmund operator of product type and let $b$ be a function in  $bmo(\R^n \times \R^m)$.  Then, for  all $f\in L^p(\R^n \times \R^m), 1<p<\infty$, we have
$$\|[b, T]\|_{L^p(\R^n \times \R^m)}\lesssim \|b\|_{bmo}\|f\|_{L^p(\R^n \times \R^m)}.$$
\end{theorem}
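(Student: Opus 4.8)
The plan is to derive this from the general commutator theorem, Theorem~\ref{thm:main:B}, specialized to the basis $\mathscr B=\mathcal R$ of rectangles $R=Q_1\times Q_2$ ($Q_1\subset\R^n$, $Q_2\subset\R^m$ cubes) and to the exponents $q=p$, $\theta=s=p$. Since $[b,T]=-[T,b]$, it suffices to bound $[T,b]$. Two facts are required. First, that the space $\BMO_{\mathcal R}$ appearing in Theorem~\ref{thm:main:B} coincides with $bmo(\R^n\times\R^m)$, with comparable norms; this is the $(n,m)$-variable analogue of Theorem~\ref{thm:bmo-BMO}. Second, that a product-type Calder\'on--Zygmund operator $T$ in the sense of \cite{FS1982} is linear and, for every $1<p<\infty$ and every $u\in A_{p,\mathcal R}$, maps $L^p(u)$ boundedly to itself; equivalently, writing $u=w^p$, it satisfies hypothesis~\eqref{eqn:main:hyp:B} with $q=p$, $\theta=s=p$, $\mathscr B=\mathcal R$. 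By the remark following Theorem~\ref{thm:main:B}, the qualitative form of this weighted estimate is enough; one does not need the operator norm written as an explicit increasing function of $[u]_{A_{p,\mathcal R}}$. Granting these, Theorem~\ref{thm:main:B} gives that $[T,b]$ is bounded on $L^p(w^p)$ for $b\in\BMO_{\mathcal R}$ and for any $w$ with $w^{p\eta}\in A_{p,\mathcal R}$ for some $\eta>1$; taking $w\equiv1$ — for which the condition holds with any $\eta$, since $1\in A_{p,\mathcal R}$ — yields $\|[T,b]f\|_{L^p}\lesssim_p\|b\|_{\BMO_{\mathcal R}}\|f\|_{L^p}\approx\|b\|_{bmo}\|f\|_{L^p}$, as desired.

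For the first fact I would reproduce, in the product space $\R^n\times\R^m$, the chain of lemmas leading to Theorem~\ref{thm:bmo-BMO}. The only inequality that is not formal is $\|f\|_{\BMO_{\mathcal R}}\lesssim\|f\|_{bmo}$; the reverse is immediate from Jensen. To get it one uses: (i) the characterization of $A_{2,\mathcal R}(\R^n\times\R^m)$ by uniform membership of the slices $w(\cdot,y)$ and $w(x,\cdot)$ in $A_2(\R^n)$ and $A_2(\R^m)$, together with the two-sided comparison analogous to \eqref{weightsrelationship:1}--\eqref{weightsrelationship:2} (the proof in \cite[pp.~453--459]{RDF1985} works verbatim for a product of two bases); (ii) the analogous slice characterization of $bmo$, with the comparison \eqref{bmorelationship}, from \cite{CS1996}; and (iii) the exponential-integrability estimates \eqref{log2} and \eqref{4} relating $A_{2,\mathscr B}$ to $\BMO_{\mathscr B}$, valid for an arbitrary basis. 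Chaining these exactly as in the proof of Theorem~\ref{thm:bmo-BMO} — feed $f\in bmo$ through \eqref{bmorelationship}, apply one-dimensional John--Nirenberg in each slice, then \eqref{4} on each factor, then \eqref{weightsrelationship:2} to recombine, then \eqref{log2} — produces a constant of the form $C(n,m)\|f\|_{bmo}$.

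The second fact is simply a reference to Fefferman--Stein's theory of singular integrals on product spaces \cite{FS1982}, which supplies exactly the weighted $L^p$ bounds with respect to the rectangular $A_p$ class for all $1<p<\infty$; product-type Calder\'on--Zygmund operators are linear, so Theorem~\ref{thm:main:B} applies directly with no appeal to the linearizable version. If one additionally wishes to record weighted estimates for $[T,b]$ (not just the unweighted one), it suffices to observe that Euclidean product $A_{p,\mathcal R}$ weights inherit reverse H\"older inequalities slice-by-slice from the one-parameter theory, so the auxiliary exponent $\eta>1$ exists for every such weight; this refinement, however, is not needed for the stated theorem.

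The main obstacle is the bookkeeping in the first fact: one must verify that the one-parameter results borrowed from \cite{RDF1985} and \cite{CS1996} truly extend to slices over $\R^n$ and $\R^m$ with control uniform in the complementary variable, and that the dimensional constants generated along the way combine correctly through \eqref{weightsrelationship:2} and \eqref{log2}. All of this is essentially identical to the $\R\times\R$ case already treated; everything downstream — fitting product Calder\'on--Zygmund operators into the hypothesis of Theorem~\ref{thm:main:B} and then setting $w\equiv1$ — is routine.
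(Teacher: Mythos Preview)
Your proposal is correct and follows essentially the same route as the paper: the paper derives this theorem by combining the $(n,m)$-parameter analogue of Theorem~\ref{thm:bmo-BMO} (so that $bmo=\BMO_{\mathcal R}$) with the general commutator machine Theorem~\ref{thm:main:B}, feeding in the rectangular-$A_p$ weighted bounds for product-type Calder\'on--Zygmund operators from Fefferman--Stein~\cite{FS1982}. Your choice of parameters $q=p$, $\theta=s=p$, the observation that the qualitative remark after Theorem~\ref{thm:main:B} suffices, and the specialization $w\equiv 1$ are all exactly what is intended.
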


\subsection{Operators on spaces of homogeneous type}
All the previous applications, as long as the operators can be defined, can be extended to the case where the underlying Euclidean space is replaced by a space of homogeneous type. The details are left to the interested reader.

\appendix

\section{Multilinear commutators III}\label{appendix}

Shortly after this paper was posted on arXiv, a new extrapolation result was obtained in \cite{LMO} associated to the classes of weights $A_{\textbf{P},\textbf{R}}$ which generalize the classes $A_{\textbf{P}}$ introduced above. As a matter of fact, \cite[Section 2.5]{LMO} borrowed some of the key ideas from the present paper and sketched an argument that yields a commutator result along the lines of Theorem \ref{thm:multilinear:main:II}. We present here the complete argument with the quantitative bounds.

We begin by introducing the class of weights $A_{\textbf{P},\textbf{R}}$. Let $\textbf{R}=(r_1,\dots, r_{m+1})\in [1,\infty)^{m+1}$,   and $\textbf{P} = (p_1, \dots, p_m)$ with
\begin{equation}\label{R-P}
r_j<p_j<\infty,\quad j=1,\dots,m;
\qquad
\text{and} \qquad r_{m+1}'>p,
\quad \text{where}\quad
\frac{1}{p}=\frac{1}{p_1} + \cdots + \frac{1}{p_m}.
\end{equation}
Let $\textbf{w} = (w_1, \dots, w_m)$ and set
$$
\nu_{\textbf{w}} = \prod_{j=1}^m w_j^{p/p_j}.
$$
We say that the vector weight $\textbf{w} \in A_{\textbf{P},\textbf{R}}$
if
$$
[\textbf{w}]_{A_{\textbf{P},\textbf{R}}}
:=
\sup_Q \left( \fint_Q \nu_w^{{\Delta_{m+1}}/{p}} \,dx \right)^{1/\Delta_{m_+1}} \prod_{j=1}^m \left( \fint_Q w_j^{-{\Delta_j}/{p_j}} \,dx \right)^{1/\Delta_j} < \infty,$$
where
$$
\frac1{\Delta_j}=\frac1{r_j}-\frac1{p_j},\quad j=1,\dots,m;
\qquad
\text{and} \qquad
\frac1{\Delta_{m+1}}
=\frac1{p}-\frac1{r_{m+1}'}.
$$
It is straightforward to see that $A_{\textbf{P},\textbf{R}}=A_{\textbf{P}}$ whenever $\textbf{R}=(1,\dots,1).$ See also \cite{CTW} for other similar classes of weights.

We next state the aforementioned result (the present formulation differs from \cite[Theorem 2.21]{LMO} where the emphasis was put on extrapolation):
\begin{theorem}\label{thm:multilinear:main:III}
	Let $T$ be an $m$-linear operator. Let $\textbf{R}=(r_1,\dots, r_{m+1})\in [1,\infty)^{m+1}$,   and $\textbf{P} = (p_1, \dots, p_m)$ so that \eqref{R-P} holds and assume further that $p>1$.
	Suppose that there exists an increasing functions $\phi: [1, \infty)\to [0, \infty)$ such that for all $\textbf{w} = (w_1, \dots, w_m)\in A_{\textbf{P},\textbf{R}}$, we have
	\begin{equation}
	\label{vector-weight:III}
	\|T\textbf{f}\|_{L^p (\nu_{\textbf{w}})}
	\lesssim
	\phi \left([\textbf{w}]_{A_{\textbf{P},\textbf{R}}}\right)\prod_{j=1}^m \|f_j\|_{L^{p_j}\left(w_j\right)}.
	\end{equation}
	Then, for all $\textbf{b} = (b_1, \dots, b_m) \in  \BMO^m$ and for each multi-index $\alpha$, we have
	\begin{equation}
	\label{multi-commutator-III}
	\|[T, \textbf{b}]_\alpha \textbf{f}\|_{L^p (\nu_{\textbf{w}})}
	\lesssim
	\alpha !\,\phi\left(c_{\textbf{P},\textbf{R}} [\textbf{w}]_{A_{\textbf{P},\textbf{R}}}\right)
	 [\textbf{w}]_{A_{\textbf{P},\textbf{R}}}^{|\alpha|\max\{\Delta_1,\ldots,\Delta_{m+1}\}}
	\prod_{j=1}^m \|b_j\|^{\alpha_j}_{\BMO} \|f_j\|_{L^{p_j}\left(w_j\right)},
	\end{equation}
	where
	$c_{\textbf{P},\textbf{R}}
	=
	2^{\frac{r}{1-r}+2\sum_{j=1}^m \min\{p_j/\Delta_j, p/\Delta_{m+1}\}/p_j}$.
\end{theorem}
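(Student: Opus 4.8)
The plan is to adapt the proof of Theorem~\ref{thm:multilinear:main:II} to the class $A_{\textbf{P},\textbf{R}}$, running the very same multivariable Cauchy integral scheme; note that the extra hypothesis $p>1$ is precisely what makes Minkowski's inequality available in $L^p(\nu_{\textbf{w}})$. As in that argument we may assume the $b_j$ are real valued and normalized so that $\|b_j\|_{\BMO}=1$, and we introduce the holomorphic map
\begin{equation*}
\Psi_z(\textbf{f})
=
e^{-\sum_{j=1}^m z_j b_j}\,T(e^{z_1 b_1} f_1,\dots,e^{z_m b_m} f_m),
\qquad z=(z_1,\dots,z_m)\in\mathbb{C}^m.
\end{equation*}
By the several-variable Cauchy integral formula~\eqref{multicauchy}, $[T,\textbf{b}]_\alpha\textbf{f}$ is the integral of $\Psi_z(\textbf{f})\,z_1^{-\alpha_1-1}\cdots z_m^{-\alpha_m-1}$ over the distinguished boundary of the polydisc of polyradius $\vec\delta=(\delta_1,\dots,\delta_m)$, so after Minkowski's inequality everything reduces to a uniform bound for $\|\Psi_z\textbf{f}\|_{L^p(\nu_{\textbf{w}})}$ on that boundary; by hypothesis~\eqref{vector-weight:III} this in turn reduces, exactly as in the bilinear case, to showing that the perturbed vector weight $\widetilde{\textbf{w}}=(\widetilde w_1,\dots,\widetilde w_m)$, $\widetilde w_j=w_j\,e^{-\text{Re}(z_j)\,p_j b_j}$, belongs to $A_{\textbf{P},\textbf{R}}$ with $[\widetilde{\textbf{w}}]_{A_{\textbf{P},\textbf{R}}}\le c_{\textbf{P},\textbf{R}}\,[\textbf{w}]_{A_{\textbf{P},\textbf{R}}}$ for suitably small $\delta_j$.

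The core of the proof is therefore a \emph{quantitative stability} statement for $A_{\textbf{P},\textbf{R}}$ under exponential perturbations, and I would obtain it by mimicking the bilinear computation. First I would record the characterization of $A_{\textbf{P},\textbf{R}}$ in terms of individual Muckenhoupt conditions (cf.\ \cite{LMO} and, for the bookkeeping, \cite[Lemma~3.1]{DLP2015}): writing $u_0=\nu_{\textbf{w}}^{\Delta_{m+1}/p}$ and $u_j=w_j^{-\Delta_j/p_j}$ for $1\le j\le m$, membership $\textbf{w}\in A_{\textbf{P},\textbf{R}}$ forces each $u_i$ into a Muckenhoupt class whose constant is bounded by a fixed power of $[\textbf{w}]_{A_{\textbf{P},\textbf{R}}}$ --- the analogue of the description $\nu_{\textbf{w}}\in A_{mp}$, $w_j^{1-p_j'}\in A_{mp_j'}$ of $A_{\textbf{P}}$ recalled before the proof of Theorem~\ref{thm:multilinear:main:II}. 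Applying the sharp reverse H\"older inequality of Lemma~\ref{RH-sharp} to each $u_i$, I get a single exponent $r=r(\textbf{w})>1$ with $r'\lesssim[\textbf{w}]_{A_{\textbf{P},\textbf{R}}}^{\max\{\Delta_1,\dots,\Delta_{m+1}\}}$ for which all the $u_i$ satisfy a reverse H\"older inequality with exponent $r$ and constant $2$. I then expand the average defining $[\widetilde{\textbf{w}}]_{A_{\textbf{P},\textbf{R}}}$ and split each factor by H\"older's inequality, peeling the exponential factors $e^{c\,\text{Re}(z_j)\,b_j}$ onto exponents comparable to $r'$: the $u_i$-pieces reproduce $2^{m+1}[\textbf{w}]_{A_{\textbf{P},\textbf{R}}}$ via the reverse H\"older step, while the exponential pieces are handled by Lemma~\ref{lemma:BMO->Ap}, which applies provided $\delta_j$ is at most a fixed multiple of $\min\{1,p_j-1\}/(p_j r')$; this constraint also forces $\delta_j\lesssim\min\{p_j/\Delta_j,\,p/\Delta_{m+1}\}/p_j$ and, collecting the resulting $4^{(\cdot)}$ factors, accounts for the constant $c_{\textbf{P},\textbf{R}}$ in the statement.

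With this stability lemma in hand I would conclude exactly as in Theorem~\ref{thm:multilinear:main:II}: applying~\eqref{vector-weight:III} to $\widetilde{\textbf{w}}$ gives the uniform bound $\|\Psi_z\textbf{f}\|_{L^p(\nu_{\textbf{w}})}\lesssim\phi(c_{\textbf{P},\textbf{R}}[\textbf{w}]_{A_{\textbf{P},\textbf{R}}})\prod_j\|f_j\|_{L^{p_j}(w_j)}$ over the distinguished boundary, Minkowski's inequality turns the Cauchy integral into the factor $\alpha!\prod_j\delta_j^{-\alpha_j}$, and the choice $\delta_j\sim[\textbf{w}]_{A_{\textbf{P},\textbf{R}}}^{-\max\{\Delta_1,\dots,\Delta_{m+1}\}}$ --- compatible with the earlier constraint precisely because $r'\lesssim[\textbf{w}]_{A_{\textbf{P},\textbf{R}}}^{\max\{\Delta_i\}}$ --- yields $\prod_j\delta_j^{-\alpha_j}\lesssim[\textbf{w}]_{A_{\textbf{P},\textbf{R}}}^{|\alpha|\max\{\Delta_i\}}$, which is~\eqref{multi-commutator-III}. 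I expect the only genuine obstacle to be the very first bookkeeping step: identifying precisely which Muckenhoupt class each $u_i$ belongs to, and verifying that the reverse H\"older exponents combine so that $r'$, and hence the final power of $[\textbf{w}]_{A_{\textbf{P},\textbf{R}}}$, come out as $\max\{\Delta_1,\dots,\Delta_{m+1}\}$ rather than something larger. Once that is pinned down, the remainder is a routine, if notation-heavy, transcription of the bilinear argument, with the $m$-variable formula~\eqref{multicauchy} in place of the two-variable one.
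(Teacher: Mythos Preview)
Your proposal is correct and follows essentially the same route as the paper's proof: the Cauchy integral trick reduces everything to the quantitative stability of $A_{\textbf{P},\textbf{R}}$ under exponential perturbations, which is obtained by combining the characterization of $A_{\textbf{P},\textbf{R}}$ in terms of individual Muckenhoupt conditions (this is \cite[Lemma~5.3(i)]{LMO}, giving $\nu_{\textbf{w}}^{\Delta_{m+1}/p}\in A_{\frac{1-r}{r}\Delta_{m+1}}$ and $w_j^{-\Delta_j/p_j}\in A_{\frac{1-r}{r}\Delta_j}$ with constants $\le[\textbf{w}]_{A_{\textbf{P},\textbf{R}}}^{\Delta_i}$), the sharp reverse H\"older lemma, H\"older's inequality, and Lemma~\ref{lemma:BMO->Ap}. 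Two small bookkeeping corrections: the reverse H\"older step produces the factor $2^{\sum_i 1/\Delta_i}=2^{(1-r)/r}$ rather than $2^{m+1}$ (this is where the $2^{\frac{r}{1-r}}$ in $c_{\textbf{P},\textbf{R}}$ comes from---note the paper's exponent $\frac{r}{1-r}$ should read $\frac{1-r}{r}$), and the $A_p$ class that governs the exponential pieces is $A_{1+\Delta_{m+1}p_j/(\Delta_j p)}$, which yields directly the constraint $\delta_j\le\frac{1}{\varrho' p_j}\min\{p_j/\Delta_j,\,p/\Delta_{m+1}\}$ rather than passing through $\min\{1,p_j-1\}$.
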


\begin{proof}
The proof follows the ideas of Theorem \ref{thm:multilinear:main:II} and we only consider the bilinear case.  Without loss of generality, we assume that $b_1, b_2$ are real valued and normalized so that their $\BMO$ norms are equal to $1$. As before, we are going to use the Cauchy integral trick, and given $\textbf{w}\in A_{\textbf{P},\textbf{R}}$ everything reduces to showing that for some appropriate $\delta_1,\delta_2>0$ (to be chosen later) and for $|z_1|=\delta_1$, $|z_2|=\delta_2$,
we have
$$
\widetilde{\textbf{w}}:=
(\widetilde{w}_1,\widetilde{w}_2)
:=
(w_1 e_{b_1}, w_2 e_{b_2})
:=
(w_1 e^{-\text{Re}(z_1)p_1 b_1} , w_2 e^{-\text{Re}(z_2)p_2 b_2}) \in A_{\textbf{P},\textbf{R}}.
$$
By \cite[Lemma 5.3(i)]{LMO} it follows that $\nu_{\textbf{w}}^{{\Delta_{3}}/{p}} \in A_{\frac{1-r}{r}\Delta_{3}}$ and $w_j^{{\theta_j }/{p_j}}\in A_{\frac{1-r}{r}\theta_j }$ or, equivalently, $w_j^{-{\Delta_j}/{p_j}}\in A_{\frac{1-r}{r}\Delta_j}$ for $j=1,2$,
where
$$
\frac1r=\frac1{r_1}+\frac1{r_3}+\frac1{r_3}>\frac1{p_1}+\frac1{p_2}+1-\frac1p=1;
\qquad\qquad
\frac1{\theta_j}=\frac{1-r}{r}-\frac1{\Delta_j},\quad j=1,2.
$$
Moreover,
$$
\Big[\nu_{\textbf{w}}^{{\Delta_{3}}/{p}}\Big]_{A_{\frac{1-r}{r}\Delta_{3}}}
\le
 [\textbf{w}]_{A_{\textbf{P},\textbf{R}}}^{\Delta_{3}};
 \qquad\quad
\Big[w_j^{-{\Delta_j}/{p_j}}\Big]_{ A_{\frac{1-r}{r}\Delta_j }}
=
\Big[w_j^{{\theta_j }/{p_j}}\Big]_{ A_{\frac{1-r}{r}\theta_j}}^{\frac{1-r}{r}\Delta_j-1}
\le
 [\textbf{w}]_{A_{\textbf{P},\textbf{R}}}^{\Delta_{j}},\quad j=1,2.
$$
Using now Lemma \ref{RH-sharp} and writing, for a given weight $w$, $\rho(w)$ instead of $\rho_w$, we can find $\varrho=\varrho(\bw)=\min \{\rho (\nu_{\textbf{w}}^{{\Delta_{3}}/{p}}), \rho (w_1^{-{\Delta_1}/{p_1}}),\rho (w_2^{-{\Delta_2}/{p_2})} \}>1$ so that
\begin{equation}\label{eq:afrfr****}
\varrho'\sim \max
\Big\{
\Big[\nu_{\textbf{w}}^{{\Delta_{3}}/{p}}\Big]_{A_{\frac{1-r}{r}\Delta_{3}}},
\Big[w_1^{-{\Delta_1}/{p_1}}\Big]_{ A_{\frac{1-r}{r}\Delta_1 }},\Big[w_2^{-{\Delta_2 }/{p_2}}\Big]_{ A_{\frac{1-r}{r}\Delta_2 }}\Big\}
\le
[\textbf{w}]_{A_{\textbf{P},\textbf{R}}}^{\max\{\Delta_1,\Delta_2,\Delta_3\}}
\end{equation}
and the following reverse H\"older inequalities hold:
\begin{equation}\label{rh1:**}
\left( \fint_Q \nu_{\textbf{w}}^{{\Delta_{3}}\varrho/{p}} dx \right)^{1/\varrho} \leq 2 \fint_Q \nu_{\textbf{w}}^{{\Delta_{3}}/{p}} dx
\end{equation}
and, for $j=1,2$,
\begin{equation}\label{rh2:***}
\left( \fint_Q w_j^{-{\Delta_j}\varrho/{p_j}}\,dx \right)^{1/\varrho} \leq 2 \fint_Q w_j^{-{\Delta_j}/{p_j}}\,dx.
\end{equation}
Using all these and regrouping terms,  we get
\begin{align*}
&
\Big(\fint_Q
\nu_{\widetilde{\bw}}^{{\Delta_{3}}/{p}} d x\Big)^{1/{\Delta_{3}}}
\prod_{j=1}^2 \Big(\fint_Q \widetilde{w}_j^{-{\Delta_i}/{p_i}} d x\Big)^{1/{\delta_j}}
\\
&\qquad\qquad=
\left( \fint_Q \nu_{\bw}^{{\Delta_{3}}/{p}} e_{b_1}^{ {\Delta_{3}}/{p_1}} e_{b_1}^{ {\Delta_{3}}/{p_2}} dx \right)^{1/{\Delta_{3}}}
\prod_{j=1}^2 \Big(\fint_Q w_j^{-{\Delta_j}/{p_j}} e_{b_j}^{-{\Delta_j}/{s_j}} d x\Big)^{1/{\Delta_j}}
\\
&\qquad\qquad\le
\left( \fint_Q \nu_{\bw}^{{\Delta_{3}}\varrho/p} dx \right)^{1/({\Delta_{3}\varrho})}
\prod_{j=1}^2 \Big(\fint_Q w_j^{-{\Delta_j}\varrho/{p_j}} d x\Big)^{1/({\Delta_j\varrho})}
\\
&\qquad\qquad\quad\qquad
\left( \fint_Q \prod_{j=1}^2 e_{b_j}^{ {\Delta_{3}}\varrho'/{p_j}} dx \right)^{1/({\Delta_{3}\varrho'})}
\prod_{j=1}^2 \Big(\fint_Q e_{b_j}^{-{\Delta_j}\varrho'/{p_j}} d x\Big)^{1/({\Delta_j\varrho'})}
\\
&
\qquad\qquad\le
2^{\frac{1-r}{r}}[\textbf{w}]_{A_{\textbf{P},\textbf{R}}}
\prod_{j=1}^2
\left( \fint_Q e_{b_j}^{{\Delta_{3}}\varrho'/{p}} dx \right)^{ p/({\Delta_{3}\varrho'p_j})}
\Big(\fint_Q e_{b_j}^{-{\Delta_j}\varrho'/{p_j}} d x\Big)^{1/({\Delta_j\varrho'})}
\\
&
\qquad\qquad\le
2^{\frac{1-r}{r}}[\textbf{w}]_{A_{\textbf{P},\textbf{R}}}
\prod_{j=1}^2 \Big[e_{b_j}^{ {\Delta_{3}}\varrho'/p}\Big]_{A_{1+{\Delta_{3}p_j}/({\Delta_j p})}}^{ p/({\Delta_{3}\varrho'p_j})}
\\
&
\qquad\qquad\le
2^{\frac{1-r}{r}+2(\delta_1+\delta_2)} [\textbf{w}]_{A_{\textbf{P},\textbf{R}}},
\end{align*}
where the last estimate holds by Lemma \ref{lemma:BMO->Ap} provided
$$
\delta_j\le \frac1{\varrho'p_j}\min \Big\{\frac{p_j}{\Delta_j},\frac{p}{\Delta_{3}}\Big\}.
$$
Notice that this choice implies that $\delta_j\le \min\{p_j/\Delta_j, p/\Delta_3\}/p_j$. On the other hand, recalling \eqref{eq:afrfr****} and assuming further that
$\delta_j\sim [\textbf{w}]_{A_{\textbf{P},\textbf{R}}}^{\max\{\Delta_1,\Delta_2,\Delta_3\}}
$ we eventually obtain
$$
\|[T, \textbf{b}]_\alpha \textbf{f}\|_{L^p (\nu_{\textbf{w}})} \lesssim \alpha !\delta_1^{-\alpha_1}\delta_2^{-\alpha_2}
\phi\left(c_{\textbf{P},\textbf{R}}[\textbf{w}]_{A_{\textbf{P},\textbf{R}}}\right)  \|f_1\|_{L^{p_1}\left(w_1\right)}\|f_2\|_{L^{p_2}\left(w_2\right)}.
$$
with $c_{\textbf{P},\textbf{R}}
=
2^{\frac{r}{1-r}+2\min\{p_1/\Delta_1, p/\Delta_3\}/p_1+2\min\{p_2/\Delta_2, p/\Delta_3\}/p_2}$. This easily gives the desired estimate.
\end{proof}

Using Theorem \ref{thm:multilinear:main:III} and \cite[Theorem 3]{CPO2016} we can immediately obtain the following weighted estimates for the commutators of the BHT (note that, as before, one can remove the restriction $p>1$ by using extrapolation, see \cite[Corollary 2.26]{LMO}):
\begin{corollary}\label{corolo:BHT-new}
Let $\textbf{R}=(r_1,r_2,r_3)$ be such that $1<r_1,r_2,r_3<\infty$ and
\begin{equation}\label{cond-adm:corol}
\frac1{\min\{r_1,2\}}+\frac1{\min\{r_2,2\}}+\frac1{\min\{r_3,2\}}<2.
\end{equation}
Let $\textbf{P}=(p_1,p_2)$ be such that  $r_1<p_1<\infty$, $r_2<p_2<\infty$, and $1<p<r_3'$ where $1/p=1/p_1+1/p_2$. For every $\textbf{b}\in \BMO^2$, $\textbf{w}=(w_1, w_2)\in A_{\textbf{P},\textbf{R}}$ and any multi-index $\alpha$ we have
		$$[BHT, \textbf{b}]_{\alpha}: L^{p_1}(w_1)\times L^{p_2}(w_2)\to L^p(\nu_{\textbf w}).$$	
\end{corollary}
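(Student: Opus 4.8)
The plan is to read the statement off from Theorem~\ref{thm:multilinear:main:III} applied to the bilinear operator $T=BHT$ (so $m=2$ and $\textbf{R}=(r_1,r_2,r_3)$), once the hypotheses of that theorem have been checked under the stated assumptions. First I would verify the structural conditions: by hypothesis $r_1<p_1<\infty$, $r_2<p_2<\infty$, $1<p<r_3'$ with $1/p=1/p_1+1/p_2$, which is exactly \eqref{R-P} for $m=2$, together with $p>1$ and $\textbf{R}\in[1,\infty)^{3}$. Consequently the exponents $\Delta_1,\Delta_2,\Delta_3$ and the weight class $A_{\textbf{P},\textbf{R}}$ are well defined, and Theorem~\ref{thm:multilinear:main:III} is applicable provided its main hypothesis \eqref{vector-weight:III} holds for $BHT$.

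The substantive step is therefore to check \eqref{vector-weight:III} with $T=BHT$: for every $\textbf{w}=(w_1,w_2)\in A_{\textbf{P},\textbf{R}}$,
$$
\|BHT(f_1,f_2)\|_{L^p(\nu_{\textbf{w}})}\lesssim \phi\big([\textbf{w}]_{A_{\textbf{P},\textbf{R}}}\big)\,\|f_1\|_{L^{p_1}(w_1)}\,\|f_2\|_{L^{p_2}(w_2)}
$$
for some increasing $\phi$ (a power of the $A_{\textbf{P},\textbf{R}}$-characteristic is more than enough, since only a qualitative conclusion is needed). This is precisely \cite[Theorem~3]{CPO2016}: the admissibility condition \eqref{cond-adm:corol}, namely $1/\min\{r_1,2\}+1/\min\{r_2,2\}+1/\min\{r_3,2\}<2$, is the hypothesis under which Culiuc--Di~Plinio--Ou establish a sparse domination of $BHT$ by $(r_1,r_2,r_3)$-forms, whence the displayed weighted inequality with a constant governed by $[\textbf{w}]_{A_{\textbf{P},\textbf{R}}}$. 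The only thing that needs a little care here is to match the weight class used in \cite{CPO2016} with the class $A_{\textbf{P},\textbf{R}}$ of this appendix, and to see that the truncations $\min\{r_j,2\}$ in \eqref{cond-adm:corol} correspond exactly to the $L^2$-type local control that $BHT$ forces in each slot; this is a routine bookkeeping translation.

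Granted \eqref{vector-weight:III}, Theorem~\ref{thm:multilinear:main:III} applies verbatim and yields \eqref{multi-commutator-III} for $T=BHT$: for all $\textbf{b}=(b_1,b_2)\in\BMO^2$, all $\textbf{w}\in A_{\textbf{P},\textbf{R}}$, and all multi-indices $\alpha$,
$$
\|[BHT,\textbf{b}]_\alpha\textbf{f}\|_{L^p(\nu_{\textbf{w}})}\lesssim \alpha!\,\phi\big(c_{\textbf{P},\textbf{R}}[\textbf{w}]_{A_{\textbf{P},\textbf{R}}}\big)\,[\textbf{w}]_{A_{\textbf{P},\textbf{R}}}^{|\alpha|\max_j\Delta_j}\prod_{j=1}^2\|b_j\|_{\BMO}^{\alpha_j}\|f_j\|_{L^{p_j}(w_j)},
$$
which in particular gives the qualitative mapping property $[BHT,\textbf{b}]_\alpha:L^{p_1}(w_1)\times L^{p_2}(w_2)\to L^p(\nu_{\textbf{w}})$ asserted in the corollary. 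To lift the restriction $p>1$, I would then invoke the extrapolation theorem \cite[Corollary~2.26]{LMO} with $[BHT,\textbf{b}]_\alpha$ playing the role of $BHT$: the weighted inequalities just obtained are exactly of the form required as input there, so the same range of Lebesgue exponents available for $BHT$ is recovered for its commutators.

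The only genuine (as opposed to bookkeeping) obstacle I foresee is the identification in the second paragraph: confirming that \cite[Theorem~3]{CPO2016} furnishes \eqref{vector-weight:III} in precisely the form required by Theorem~\ref{thm:multilinear:main:III}, with the same class $A_{\textbf{P},\textbf{R}}$ and a genuinely increasing quantitative $\phi$, and in particular reconciling the min-truncated exponents in the admissibility condition \eqref{cond-adm:corol} with the untruncated $r_1,r_2,r_3$ that enter $A_{\textbf{P},\textbf{R}}$ and the $\Delta_j$. Everything past that point is automatic from the Cauchy-integral machinery already developed for Theorem~\ref{thm:multilinear:main:III}.
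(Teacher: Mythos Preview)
Your proposal is correct and follows essentially the same approach as the paper: the corollary is obtained by feeding the weighted $A_{\textbf{P},\textbf{R}}$ bounds for $BHT$ from \cite[Theorem~3]{CPO2016} (available under the admissibility condition~\eqref{cond-adm:corol}) into Theorem~\ref{thm:multilinear:main:III}, with the parenthetical remark that the constraint $p>1$ could be lifted via extrapolation from \cite[Corollary~2.26]{LMO}. Your identification of the only nontrivial point---the bookkeeping match between the weight class in \cite{CPO2016} and $A_{\textbf{P},\textbf{R}}$---is accurate, and the paper treats this as implicit in the citation.
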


\end{document}